\newtheorem{theorem}{Theorem}[section]
\newtheorem{corollary}[theorem]{Corollary}
\newtheorem{lemma}[theorem]{Lemma}
\newtheorem{proposition}[theorem]{Proposition}
\newtheorem{remark}[theorem]{Remark}
\def\bea{\begin{eqnarray*}}
\def\eea{\end{eqnarray*}}
\def\ot{\otimes}
\def\ra{\rightarrow}
\def\bea{\begin{eqnarray*}}
\def\eea{\end{eqnarray*}}
\def\dual{{\mathchar"5E}}
\begin{document}
\title{Graded Frobenius Rings}
\author{
S. D\u{a}sc\u{a}lescu$^{1}$, C. N\u{a}st\u{a}sescu$^{2}$ and L.
N\u{a}st\u{a}sescu$^{2}$}
\address{$^1$ University of Bucharest, Faculty of Mathematics and Computer Science,
Str. Academiei 14, Bucharest 1, RO-010014, Romania} \address{ $^2$
Institute of Mathematics of the Romanian Academy, PO-Box
1-764\\
RO-014700, Bucharest, Romania}
\address{
 e-mail: sdascal@fmi.unibuc.ro, Constantin\_nastasescu@yahoo.com,
 lauranastasescu@gmail.com
}

\date{}
\maketitle

\begin{abstract}
In order to study graded Frobenius algebras from a ring theoretical
perspective, we introduce graded quasi-Frobenius rings, graded
Frobenius rings and a shift-version of the latter ones, and we
investigate the structure and representations of such objects. We
need to revisit graded simple graded left Artinian rings, graded
semisimple rings, and to provide graded versions of certain results
concerning the Jacobson radical, the singular radical, and their
connection to finiteness conditions and injectivity. We prove a
structure result for (shift-)graded Frobenius rings.\\
2010 MSC: 16W50, 16D50, 16E50, 16G10, 16L60, 16S50.\\
Key words:  graded algebra, quasi-Frobenius algebra, Frobenius
algebra, graded division algebra,  graded semisimple algebra.
\end{abstract}

\section{Introduction and preliminaries}

Originated in the work of Frobenius on group representations,
Frobenius algebras and their relatives, quasi-Frobenius algebras,
have been objects of intense study after the influential work of
Brauer, Nesbitt and Nakayama around 1940. The initial interest was
algebraic, but Frobenius algebras occurred, sometimes unexpectedly,
in topology, differential geometry, knot theory, homological
algebra, topological quantum field theory, Hopf algebra theory, etc.
A step towards a deeper understanding of Frobenius algebras from a
ring theoretical perspective was the study of (quasi-)Frobenius
rings. A presentation of the basic theory of (quasi-)Frobenius rings
and their connection to (quasi-)Frobenius algebras can be found in
\cite{lam2}.

There are certain Frobenius algebras equipped with more structure
that occur in a natural way, for example Frobenius algebras
endowed with a grading. Inspired by an equivalent characterization
of Frobenius algebras in \cite{abrams2}, one can consider
Frobenius algebras in an arbitrary monoidal category as algebras
$A$, endowed with a coalgebra structure whose comultiplication is
a morphism of $A$-bimodules. In particular, one can look at
Frobenius algebras in the monoidal category of $G$-graded vector
spaces, where $G$ is a group; these are called graded Frobenius
algebras, and they were investigated in \cite{dnn1}, as well as a
version modified by a shift, called $\sigma$-graded Frobenius
algebras. Such objects occur in noncommutative geometry, where
certain connected graded algebras are $n$-graded Frobenius for a
positive integer $n$. For example, if $A$ is a connected
Noetherian graded algebra which is Artin-Schelter regular and
Koszul, of global dimension $n$, then the Koszul dual algebra
$A^!$ of $A$ is $n$-graded Frobenius, see \cite{smith}.  Following
the point of view that Calabi-Yau algebras are related to
non-commutative potentials, see \cite{ginzburg}, it is showed in
\cite{hx} that $n$-graded Frobenius connected algebras generated
in degree 1 can be constructed from twisted superpotentials. The
structure and representation theory of graded Frobenius algebras
have been used to classification results for certain algebras
playing a role in non-commutative geometry \cite{lpwz}, and for
proving a non-commutative Bernstein-Gelfand-Gelfand correspondence
\cite{jor}.

In order to understand the structure of graded Frobenius algebras,
our initial aim was to fill in a missing piece of the Frobenius
puzzle, by defining and investigating graded quasi-Frobenius
algebras. In developing the theory, we realized that it is
interesting to consider ring theoretical versions of the concepts.
The aim of the paper is to introduce graded quasi-Frobenius rings
and ($\sigma$-)graded Frobenius rings, and to investigate them and
their representations. As expected, a finite dimensional
 graded algebra turns out to be ($\sigma$-)graded Frobenius if and only if it
is ($\sigma$-)graded Frobenius as a ring.

Some results about graded rings and graded modules may give the
impression that graded theory is a simple extension of the
un-graded one. This is true up to a point, and a reason is that
the category of modules over a ring and the category of graded
$R$-modules over a graded ring $R$ are both Grothendieck
categories. However, the category of graded $R$-modules is
equipped with a family of category isomorphisms, the shifts by
group elements, and this adds an extra level of complexity to the
structure of this category and its objects. As an example in
support of this idea, we mention the theory of the graded
Grothendieck group of an algebra graded by an abelian group,
developed in \cite{haz}. On the other hand, even in the case where
the category of graded $R$-modules is equivalent to the category
of modules over a ring $A$, this ring has usually a much more
complicated structure than $R$. For example in the case where the
grading group is finite, $A$ is the smash product $R\# G^*$, see
\cite[Chapter 7]{nvo}.

In Section \ref{sectiongradedsimple} we discuss the structure of a
graded ring $A=M_n(\Delta)(g_1,\ldots,g_n)$ associated with a graded
division ring $\Delta$, and some group elements $g_1,\ldots,g_n$.
$A$ is graded simple and graded Artinian, so any two graded simple
left $A$-modules are isomorphic up to a shift. We count the
isomorphism types of graded simple left $A$-modules, and how many of
them embed into $A$. In Section \ref{sectionsingular} we consider
the graded versions of the Jacobson radical and the singular
radical, and we prove some of their properties related to finiteness
conditions and to injectivity. We also give an alternative proof of
the structure theorem for graded simple graded left Artinian rings,
which says that any such ring is isomorphic to
$M_n(\Delta)(g_1,\ldots,g_n)$ for some $n,\Delta$ and
$g_1,\ldots,g_n$. In Section \ref{sectionprojectiveobjects} we
consider the decomposition of a graded left Artinian ring into a sum
of graded indecomposable left modules, and obtain some consequences
on the graded simple modules when we factor by the graded Jacobson
radical. A structure result for projective objects in the category
of graded modules is derived. In Section \ref{sectionQF} we define
graded quasi-Frobenius rings by proving several equivalent
characterizations. In the case of a graded ring $R$ of finite
support, we show that $R$ is graded quasi-Frobenius if and only if
it is quasi-Frobenius. More properties of graded quasi-Frobenius
rings are investigated in Section \ref{sectionmoreQF}, where we also
associate a certain set of data
 with a
graded quasi-Frobenius ring, including a version of the Nakayama
permutation. This set of data is used in Section
\ref{sectionFrobenius} to introduce graded Frobenius rings and to
give equivalent characterizations. In fact, we define the more
general version of a $\sigma$-graded Frobenius ring, which matches
with the shift-modified version of graded Frobenius algebra
mentioned above. At this point it will be clear that there is a
higher degree of complexity of the concept, compared to the
un-graded one. In the un-graded case, the Nakayama permutation and
the multiplicities of the isomorphism types of principal
indecomposable modules is all that we need for deciding whether a
quasi-Frobenius ring is Frobenius, while in the graded case it
turns out that one needs more information, related to the inertia
groups of the graded simple modules and certain shifts. We note
that for developing the theory of graded (quasi-)Frobenius rings,
we need many times to work not with isomorphism types of graded
modules, but with isoshift types, see the definition below.

Let $G$ be a group with neutral element $\varepsilon$. A ring $R$ is
$G$-graded if it has a decomposition $R=\oplus_{g\in G}R_g$ as a
direct sum of additive subgroups such that $R_gR_h\subset R_{gh}$
for any $g,h\in G$; in particular, $R_\varepsilon$ is a subring of
$R$. A graded left $R$-module is a left $R$-module $M$ with a
decomposition $M=\oplus_{g\in G}M_g$ of additive subgroups, such
that $R_gM_h\subset M_{gh}$ for any $g,h\in G$. We consider the
category $R-gr$ of graded left $R$-modules, where a morphism $f:M\ra
N$ of graded $R$-modules is an $R$-module morphism such that
$f(M_g)\subset N_g$ for any $g\in G$. If $M\in R-gr$ and $\sigma \in
G$, the $\sigma$-shift of $M$ is the graded $R$-module $M(\sigma)$
which coincides with $M$ as an $R$-module, and has the grading given
by $M(\sigma)_g=M_{g\sigma}$ for any $g\in G$. If $M,N\in R-gr$ and
$\sigma \in G$, a morphism of degree $\sigma$ from $M$ to $N$ is a
morphism $f:M\ra N$ of $R$-modules such that $f(M_g)\subset
N_{g\sigma}$ for any $g\in G$, i.e., $f$ is a morphism in $R-gr$
from $M$ to $N(\sigma)$. The category $R-gr$ is a locally finite
Grothendieck category; a family of generators is
$(R(\sigma))_{\sigma\in G}$.  We consider the equivalence relation
$\sim$, which we call the isoshift equivalence, defined as follows:
if $M, N\in R-gr$, then $M\sim N$ if and only if there exists
$\sigma \in G$ such that $M$ is isomorphic to $N(\sigma)$. The
equivalence classes with respect to $\sim$ will be called the
isoshift types of graded left $R$-modules. Similarly we can define
the category $gr-R$ of graded right $R$-modules, whose objects are
right $R$-modules $M$ with a decomposition $M=\oplus_{g\in G}M_g$
such that $M_gR_h\subset M_{gh}$ for any $g,h\in G$. For such an
object and $\sigma\in G$, the $\sigma$-shift $(\sigma)M$ is defined
by $(\sigma)M_g=M_{\sigma g}$, and we can also consider isoshift
types of graded right $R$-modules.

\section{The structure of graded simple rings}
\label{sectiongradedsimple}

Let $R$ be a $G$-graded ring. If $S$ is a graded simple left
submodule of $R$, i.e., a minimal graded left ideal,  let
$\mathcal{U}$ be the sum of all graded left submodules of $R$
which are isomorphic to a shift of $S$. Then $\mathcal{U}$ is a
two-sided graded ideal of $R$. Indeed, write
$\mathcal{U}=\sum_iS_i$, where $(S_i)_i$ is the family of all
graded left submodules of $R$ isomorphic to some shift of $S$. If
$g\in G$ and  $a\in R_g$, then the map $\varphi:R\ra R$, $\varphi
(x)=xa$, is a morphism of degree $g$ of graded left modules, and
$\mathcal{U}a=\varphi (\mathcal{U})=\sum_i \varphi (S_i)$. Since
$S_i$ is graded simple, then $\varphi (S_i)=0$ or
$\varphi(S_i)\simeq S_i(g)$. Now $S_i(g)$ is also isomorphic to a
shift of $S$, so it must be one of the $S_j$'s. Hence
$\mathcal{U}a\subseteq \mathcal{U}$, so $\mathcal{U}$ is also a
right ideal of $R$. We call $\mathcal{U}$ the (left) isoshift
component of $R$ corresponding to $S$. The socle ${\rm soc}^{\rm
gr}_\ell(R)$ of the graded left $R$-module $R$ is the direct sum
of all left isoshift components. Similarly, the right isoshift
components of $R$ are graded ideals, and their direct sum is the
right graded socle ${\rm soc}^{\rm gr}_r(R)$.

A graded ring $R$ is called graded semisimple if $R$ is a sum of
minimal graded left ideals, i.e., $R={\rm soc}^{\rm gr}_\ell(R)$.
This is equivalent to the fact that the category of graded left
$R$-modules is semisimple, i.e., any graded left $R$-module is a
sum of graded simple modules. If moreover, $R$ is a sum of minimal
graded ideals such that any two of them are isomorphic up to a
shift, then $R$ is called graded simple in \cite[page 55]{nvo}; in
order to avoid confusion, we will call such an $R$ a {\it graded
simple and graded left Artinian ring}. These properties (graded
semisimple, and graded simple and graded left Artinian) turn out
to be left-right symmetric. If $R$ is graded simple and graded
left Artinian, then any two graded simple modules are isomorphic
up to a shift, so there is just one isoshift type, and the only
isoshift component is equal to the whole of $R$. If $R$ is graded
semisimple, then $R$ is the direct sum of the left isoshift
components, which are finitely many. This decomposition shows that
a graded semisimple ring is isomorphic to a finite product of
graded simple and graded Artinian rings; see \cite[Section
2.9]{nvo}. A graded semisimple ring has the same number of
isoshift types  to the left and to the right, and this is just the
number of factors in the decomposition of $R$ as a product of
graded simple and graded left Artinian rings. We note that if $R$
is graded semisimple, a graded simple left $R$-module does not
necessarily embed into $R$, however at least one of its shifts
does.

Graded left Artinian graded rings $R$ whose only two-sided ideals
are 0 and $R$ were considered in \cite[Section 2.1]{ek}. We will
explain in Section \ref{sectionsingular} that these are the same
objects as the graded simple and graded left Artinian rings
discussed above.

Let $\Delta=\oplus_{\sigma \in G}\Delta_\sigma$ be a $G$-graded
division ring, i.e., $\Delta$ is a $G$-graded ring whose all
non-zero homogeneous elements are invertible. Let $n$ be a
positive integer, and $g_1,\ldots,g_n\in G$. We consider the
$G$-graded ring $A=M_n(\Delta)(g_1,\ldots,g_n)$, which is just
$M_n(\Delta)$ as a ring, and has a $G$-grading with the
homogeneous component of degree $\sigma\in G$ given by

$$A_\sigma=\left(
\begin{array}{cccc}
\Delta_{g_1\sigma g_1^{-1}}&\Delta_{g_1\sigma g_2^{-1}}&\ldots&\Delta_{g_1\sigma g_n^{-1}}\\
\Delta_{g_2\sigma g_1^{-1}}&\Delta_{g_2\sigma g_2^{-1}}&\ldots&\Delta_{g_2\sigma g_n^{-1}}\\
\ldots&\ldots&\ldots&\ldots\\
\Delta_{g_n\sigma g_1^{-1}}&\Delta_{g_n\sigma
g_2^{-1}}&\ldots&\Delta_{g_n\sigma g_n^{-1}}
\end{array}
\right)$$

If we denote by $e_{ij}$ the usual matrix units in $A$, then
$e_{ij}$ is homogeneous of degree $g_i^{-1}g_j$ for any $i,j$.

It is proved in \cite[Corollary 4.6.7]{nvo} and in \cite[Theorem
2.6]{ek} that a graded simple and graded left Artinian ring is
necessarily isomorphic to a graded ring of the form
$A=M_n(\Delta)(g_1,\ldots,g_n)$  as above. In both cited
references the proof uses a version of the Jacobson density
theorem for graded simple modules. We will present an alternative
proof in Section \ref{sectionsingular}. It is also indicated in
\cite[page 31]{ek} that any graded ring
$A=M_n(\Delta)(g_1,\ldots,g_n)$ of this type is graded simple and
graded left Artinian. Therefore there is just one isoshift type of
graded simple left $A$-modules. We determine how many isomorphism
types of graded simple $A$-modules exist, and how many of them
embed into $A$.

For any $1\leq j\leq n$, let $\Sigma_j$ be the left $A$-module
$\tiny{\left(
\begin{array}{c}
\Delta \\ \Delta \\
\ldots \\
\Delta
\end{array}
\right)}$ with a structure of a graded left $A$-module given by
$$(\Sigma_j)_\sigma =\tiny{\left(
\begin{array}{c}
\Delta_{g_1\sigma g_j^{-1}} \\ \Delta_{g_2\sigma g_j^{-1}} \\
\ldots \\
\Delta_{g_n\sigma g_j^{-1}}
\end{array}
\right)}$$ We have that $A\simeq \oplus_{1\leq j\leq n}\Sigma_j$.

\begin{proposition}
 $\Sigma_j$ is a graded simple module for any $1\leq j\leq n$.
\end{proposition}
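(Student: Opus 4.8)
The plan is to run the classical argument showing that the column module over a full matrix ring $M_n(\Delta)$ is simple, the only modification being that one must stay inside the homogeneous parts; the single ingredient that makes this work is that $\Delta$ is a \emph{graded} division ring, so every nonzero homogeneous element of $\Delta$ is invertible (with homogeneous inverse). Under the identification of $\Sigma_j$ with the left ideal $Ae_{jj}$ of the $j$-th column of $A$, this also re-proves the decomposition $A\simeq\bigoplus_{1\le j\le n}\Sigma_j$ stated just above.

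Concretely, I would fix $j$ and let $N$ be a nonzero graded $A$-submodule of $\Sigma_j$. Since $N$ is graded and nonzero, it contains a nonzero homogeneous element $x$, say $x\in(\Sigma_j)_\sigma$, and I write $x=(x_1,\dots,x_n)^T$ with $x_i\in\Delta_{g_i\sigma g_j^{-1}}$; choose an index $k$ with $x_k\neq 0$. As $\Delta$ is a graded division ring, $x_k$ is invertible, with $x_k^{-1}$ homogeneous of degree $(g_k\sigma g_j^{-1})^{-1}$. Now put $a=x_k^{-1}e_{kk}\in A$; carrying out the matrix-times-column-vector multiplication gives $ax=c_k$, the column vector with $1$ in position $k$ and $0$ in all other positions, so $c_k\in N$ because $N$ is an $A$-submodule. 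Finally, for an arbitrary $y=(y_1,\dots,y_n)^T\in\Sigma_j$ one has $\bigl(\sum_{l=1}^{n}y_l e_{lk}\bigr)c_k=y$ with $\sum_l y_l e_{lk}\in M_n(\Delta)=A$, hence $y\in Ac_k\subseteq N$; therefore $N=\Sigma_j$, and since $\Sigma_j\neq 0$ this shows $\Sigma_j$ is graded simple.

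I do not expect a real obstacle here: all the content is packed into the observation that homogeneous elements of $\Delta$ are units, which is exactly what lets the usual density-free argument run within a single graded component. The one point that must not be glossed over is that the grading is genuinely needed — $\Sigma_j$ need not be simple as an ungraded module when $\Delta$ is not a division ring — so the proof has to begin by extracting a \emph{homogeneous} nonzero element from $N$, which is the precise place the hypothesis that $N$ is a graded submodule enters. (One can optionally record as a by-product that the map $Ae_{jj}\to\Sigma_j$, $b\mapsto(\text{$j$-th column of }b)$, is an isomorphism in $A$-gr.)
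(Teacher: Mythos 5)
Your proof is correct and follows essentially the same approach as the paper: take a nonzero homogeneous element of the (sub)module, use that nonzero homogeneous entries of $\Delta$ are invertible, and act by matrix-unit combinations to reach every element of $\Sigma_j$. The only cosmetic difference is that you first normalize to the basis column $c_k$ and then hit it with $\sum_l y_l e_{lk}$, while the paper does both steps at once by acting with $\sum_l v_l u_i^{-1} e_{li}$.
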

\begin{proof}
Let $0\neq z=\tiny{\left(
\begin{array}{c} u_1\\\ldots \\ u_n\end{array}
\right)} \in (\Sigma_j)_\sigma$, thus $u_i\in \Delta_{g_i\sigma
g_j^{-1}}$ for any $1\leq i \leq n$. Pick $i$ such that $u_i\neq
0$. Then for any $y=\tiny{\left(
\begin{array}{c} v_1\\\ldots \\ v_n\end{array}
\right)} \in \Sigma_j$ we have
$$y=(v_1u_i^{-1}e_{1i})z+(v_2u_i^{-1}e_{2i})z+\ldots
+(v_nu_i^{-1}e_{ni})z\in Az$$ so $Az=\Sigma_j$, and this shows that
$\Sigma_j$ is graded simple.
\end{proof}

We see that for any $\sigma \in G$,
$(\Sigma_j(g_1^{-1}g_j))_\sigma=(\Sigma_j)_{\sigma g_1^{-1}g_j}$,
and this has $\Delta_{g_i\sigma
g_1^{-1}g_jg_j^{-1}}=\Delta_{g_i\sigma g_1^{-1}}$ on the $i$th row,
so then $\Sigma_j(g_1^{-1}g_j)= \Sigma_1$, or
$\Sigma_j=\Sigma_1(g_j^{-1}g_1)$. As a consequence we obtain that
$$A\simeq \oplus_{1\leq j\leq n}\Sigma_1(g_j^{-1}g_1),$$
thus $A$ is a graded simple and graded left Artinian ring.

We denote by $supp(\Delta)=\{ \sigma\in G|\Delta_\sigma\neq 0\}$
the support of $\Delta$, which is a subgroup of $G$.

\begin{proposition} \label{isosigma1}
Let $\tau \in G$. Then $\Sigma_1(\tau)\simeq \Sigma_1$ if and only
if $\tau \in g_1^{-1}supp(\Delta)g_1$.
\end{proposition}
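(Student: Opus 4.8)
The plan is to reduce the statement to deciding when a single homogeneous component of the graded endomorphism ring of $\Sigma_1$ is nonzero. First I would identify $\Sigma_1$ with the graded left ideal $Ae_{11}$: the assignment carrying a column of $\Sigma_1$ to the corresponding first column of a matrix is an isomorphism of left $A$-modules $Ae_{11}\to\Sigma_1$, and since $e_{11}$ is homogeneous of degree $g_1^{-1}g_1=\varepsilon$ one has $(Ae_{11})_\sigma=A_\sigma e_{11}$, whose first column is precisely $(\Delta_{g_k\sigma g_1^{-1}})_{1\le k\le n}=(\Sigma_1)_\sigma$; hence the identification is one of graded modules. Next I would unwind the assertion $\Sigma_1(\tau)\simeq\Sigma_1$: a graded isomorphism $\Sigma_1(\tau)\to\Sigma_1$ is the same thing (invert it) as a graded isomorphism $\Sigma_1\to\Sigma_1(\tau)$, which by the definitions of the shift and of a morphism of degree $\tau$ is exactly an $A$-linear bijection $h\colon\Sigma_1\to\Sigma_1$ with $h((\Sigma_1)_g)\subseteq(\Sigma_1)_{g\tau}$ for every $g\in G$, i.e.\ an invertible homogeneous element of degree $\tau$ of $\mathrm{END}_A(\Sigma_1)=\bigoplus_{\sigma}\mathrm{HOM}_A(\Sigma_1,\Sigma_1)_\sigma$. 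Since $\Sigma_1$ is graded simple by the previous proposition, any nonzero homogeneous endomorphism of it is automatically bijective (its kernel and image are graded submodules). So the conclusion I want is equivalent to $\mathrm{END}_A(\Sigma_1)_\tau\neq 0$.

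Then I would compute that component. Using the standard identification $\mathrm{HOM}_A(Ae_{11},Ae_{11})\cong e_{11}Ae_{11}$, $\varphi\mapsto\varphi(e_{11})$, which is compatible with gradings (a homogeneous morphism of degree $\tau$ corresponds to an element of $e_{11}A_\tau e_{11}$), I get that $e_{11}A_\tau e_{11}$ consists of the matrices whose only possibly nonzero entry, in position $(1,1)$, lies in $\Delta_{g_1\tau g_1^{-1}}$; thus $\mathrm{END}_A(\Sigma_1)_\tau\cong\Delta_{g_1\tau g_1^{-1}}$. This is nonzero precisely when $g_1\tau g_1^{-1}\in supp(\Delta)$, i.e.\ when $\tau\in g_1^{-1}supp(\Delta)g_1$, and combining with the previous paragraph gives the equivalence.

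There is no deep obstacle here; the only point needing care is the bookkeeping of the shift and degree conventions (on which side group elements act, that $(\Sigma_1(\tau))_g=(\Sigma_1)_{g\tau}$, and that the degree of $h$ comes out as $\tau$ rather than $\tau^{-1}$), although since $supp(\Delta)$ is a subgroup of $G$ the set $g_1^{-1}supp(\Delta)g_1$ is closed under inversion, so even a variance slip at that spot would be harmless. If one prefers to avoid the passage through $Ae_{11}$, an equivalent route is to observe directly that every $A$-endomorphism of the column module $\Sigma_1$ is right multiplication by a scalar of $\Delta$ and to track the degree of such a multiplication, which again yields the component $\Delta_{g_1\tau g_1^{-1}}$.
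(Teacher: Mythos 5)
Your argument is correct, and it takes a genuinely different route from the one in the paper. The paper's proof is an annihilator computation: it uses the fact that for a graded simple module $\Sigma$ with a nonzero $x\in\Sigma_g$ one has $\Sigma\simeq A(g^{-1})/\mathrm{ann}_A(x)$, takes $x=(1,0,\dots,0)^T\in(\Sigma_1)_\varepsilon$, explicitly computes $\mathrm{ann}_A(x)$ (matrices with zero first column), and then shows that a $u\in(\Sigma_1)_\tau$ with the same annihilator must have $u_2=\dots=u_n=0$ and $u_1\neq 0$, which forces $\Delta_{g_1\tau g_1^{-1}}\neq 0$. You instead identify $\Sigma_1$ with the graded left ideal $Ae_{11}$ and reduce the statement, via the graded Schur lemma, to deciding when $\mathrm{END}_A(\Sigma_1)_\tau\cong e_{11}A_\tau e_{11}\cong\Delta_{g_1\tau g_1^{-1}}$ is nonzero. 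Your route is more structural: it makes visible that $\mathrm{END}_A(\Sigma_1)$ is the graded division ring $\Delta$ with a conjugated grading (support $g_1^{-1}\mathrm{supp}(\Delta)g_1$), so the inertia group of $\Sigma_1$ is precisely the support of that endomorphism ring; this fits naturally with the $\mathrm{END}$-based proof of the structure theorem given later in Section 3. The paper's route is more elementary and self-contained, requiring no endomorphism-ring machinery. One bookkeeping point you should make explicit if writing this up: when you invoke bijectivity of a nonzero homogeneous endomorphism, you should also note that its inverse is again homogeneous (of degree $\tau^{-1}$), so that you really do obtain a graded isomorphism $\Sigma_1\to\Sigma_1(\tau)$ and not merely an injection; this is immediate but worth a sentence.
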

\begin{proof}
If $\Sigma$ is a graded simple $A$-module, and $x\in
\Sigma_g\setminus \{ 0\}$, then $\varphi:A(g^{-1})\ra \Sigma$,
$\varphi (a)=ax$, is a surjective morphism of graded $A$-modules,
so then $\Sigma \simeq A(g^{-1})/{\rm ann}_A(x)$. This shows that
if $\Gamma$ is another graded simple $A$-module, then
$\Sigma\simeq \Gamma$ if and only if there exists $u\in \Gamma_g$
such that ${\rm ann}_A(u)={\rm ann}_A(x)$.

We apply this fact for $\Sigma=\Sigma_1$, $x=\tiny{\left(
\begin{array}{c} 1\\ 0\\ \ldots \\ 0\end{array}
\right)}\in (\Sigma_1)_\varepsilon$, and $\Gamma=\Sigma_1(\tau)$.
Clearly, ${\rm ann}_A(x)=\tiny{\left(
\begin{array}{cccc}
0&\Delta &\ldots &\Delta\\
0&\Delta &\ldots &\Delta\\
\ldots&\ldots &\ldots &\ldots\\
0&\Delta &\ldots &\Delta
\end{array}
\right)}$, and then $\Sigma_1(\tau)\simeq \Sigma_1$ if and only if
there exists
$$u=\tiny{\left(
\begin{array}{c} u_1\\u_2\\ \ldots \\ u_n\end{array}
\right)}\in \Sigma_1(\tau)_e=(\Sigma_1)_\tau=\tiny{\left(
\begin{array}{c}
\Delta_{g_1\tau g_1^{-1}} \\ \Delta_{g_2\tau g_1^{-1}} \\
\ldots \\
\Delta_{g_n\tau g_n^{-1}}
\end{array}
\right)}$$ with ${\rm ann}_A(u)={\rm ann}_A(x)$. But this forces
$u_2,\ldots,u_n$ to be all zero, and $u_1$ to be non-zero, and so
the existence of such a $u$ is equivalent to $\Delta_{g_1\tau
g_1^{-1}}\neq 0$. This is the same with $g_1\tau g_1^{-1}\in
supp(\Delta)$, or $\tau \in g_1^{-1}supp(\Delta)g_1$.
\end{proof}

\begin{corollary}
{\rm (i)} The number of isomorphism types of graded simple left
$A$-modules is $[G:supp(\Delta)]$.\\
{\rm (ii)} There is a bijective correspondence between the set of
isomorphism types of graded simple left $A$-modules that embed
into $A$ and the set of the right $supp(\Delta)$-cosets of $G$
containing at least one $g_i$. Moreover, the multiplicity in $A$
of one of these graded simples is the number of the $g_i$'s lying
in the corresponding coset.\\
{\rm (iii)} $A$ is gr-uniform simple, i.e., all the simple graded
left submodules of $A$ are isomorphic, if and only if all
$g_1,\ldots ,g_n$ lie in the same right $supp(\Delta)$-coset of
$G$.
\end{corollary}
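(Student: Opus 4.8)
All three parts follow from Proposition~\ref{isosigma1}, the decomposition $A\simeq\bigoplus_{1\le j\le n}\Sigma_1(g_j^{-1}g_1)$, and the fact recalled before Proposition~\ref{isosigma1} that, $A$ being graded simple and graded left Artinian, every graded simple left $A$-module is isomorphic in $A$-gr to some shift $\Sigma_1(\tau)$ of $\Sigma_1$. The plan is first to upgrade Proposition~\ref{isosigma1} to a criterion for arbitrary shifts. Using the identity $(M(\sigma))(\rho)=M(\rho\sigma)$, immediate from the definition $M(\sigma)_g=M_{g\sigma}$, and the fact that each shift is an isomorphism of the category $A$-gr, applying $(-)(\tau'^{-1})$ to an isomorphism $\Sigma_1(\tau)\simeq\Sigma_1(\tau')$ gives $\Sigma_1(\tau'^{-1}\tau)\simeq\Sigma_1$, and conversely one shifts back by $\tau'$. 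Writing $H:=g_1^{-1}\,supp(\Delta)\,g_1$, Proposition~\ref{isosigma1} then yields: $\Sigma_1(\tau)\simeq\Sigma_1(\tau')$ if and only if $\tau'^{-1}\tau\in H$. Consequently the isomorphism types of graded simple left $A$-modules are the classes of $G$ under the relation $\tau\sim\tau'\iff\tau\in\tau'H$, i.e. the left cosets of $H$; there are $[G:H]$ of them, and since conjugation by $g_1$ is an automorphism of $G$ carrying $H$ onto $supp(\Delta)$ it permutes the cosets, so $[G:H]=[G:supp(\Delta)]$, which is (i).

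For (ii), observe that since $A\simeq\bigoplus_{j=1}^n\Sigma_j$ with each $\Sigma_j=\Sigma_1(g_j^{-1}g_1)$ graded simple (the first Proposition of this section), $A$ is graded semisimple of finite length, and a graded simple module embeds into $A$ if and only if it is isomorphic to one of the $\Sigma_j$: given a graded simple submodule $S\le A$, the restriction to $S$ of the projection onto a summand on which $S$ does not vanish is a nonzero map between graded simples, hence an isomorphism of $S$ onto that $\Sigma_j$. By the criterion above, $\Sigma_j\simeq\Sigma_k$ iff $(g_k^{-1}g_1)^{-1}(g_j^{-1}g_1)=g_1^{-1}(g_kg_j^{-1})g_1\in H$ iff $g_kg_j^{-1}\in supp(\Delta)$ iff $g_j$ and $g_k$ lie in the same right coset $supp(\Delta)g$. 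Hence $[\Sigma_j]\mapsto supp(\Delta)g_j$ is a well-defined injection from the isomorphism types of graded simples embedding into $A$ into the right $supp(\Delta)$-cosets meeting $\{g_1,\dots,g_n\}$, and it is onto because $supp(\Delta)g_i$ is the image of $[\Sigma_i]$. The multiplicity of $[\Sigma_i]$ in the decomposition $A\simeq\bigoplus_j\Sigma_j$ equals $\#\{j:\Sigma_j\simeq\Sigma_i\}=\#\{j:g_j\in supp(\Delta)g_i\}$, and this is the multiplicity of that graded simple in $A$ by uniqueness (up to isomorphism and permutation) of the decomposition of a finite-length graded semisimple module into graded simples.

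Part (iii) is the special case of (ii) in which there is only one such coset: the graded simple left submodules of $A$ are exactly those isomorphic to some $\Sigma_j$, so they are pairwise isomorphic if and only if all the $\Sigma_j$ are mutually isomorphic, which by the computation in (ii) happens if and only if $g_jg_k^{-1}\in supp(\Delta)$ for all $j,k$, i.e. $g_1,\dots,g_n$ all lie in a single right $supp(\Delta)$-coset of $G$. I expect no genuine obstacle here; the only delicate point is keeping straight the direction of the shifts and the distinction between left and right cosets — in particular verifying that $[G:H]=[G:supp(\Delta)]$ for the conjugate subgroup $H$ (as cardinals, via the coset-permuting inner automorphism) and that the relation $g_kg_j^{-1}\in supp(\Delta)$ really corresponds to "same right coset", so that (ii) and (iii) come out exactly as stated rather than with "left" in place of "right".
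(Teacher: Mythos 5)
Your proof is correct and follows essentially the same route as the paper: reduce to the criterion $\Sigma_1(\tau)\simeq\Sigma_1(\tau')\iff\tau'^{-1}\tau\in g_1^{-1}\,supp(\Delta)\,g_1$ via Proposition~\ref{isosigma1}, count left cosets of the conjugate subgroup for (i), translate the condition $\Sigma_j\simeq\Sigma_k$ into $g_kg_j^{-1}\in supp(\Delta)$ for (ii), and observe (iii) is the one-coset case. You merely spell out steps the paper leaves implicit (the shift composition rule, the embedding-into-a-summand argument, and the multiplicity count via Krull--Schmidt).
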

\begin{proof}
(i) We know that any graded simple left $A$-module is isomorphic
to $\Sigma_1(g)$ for some $g\in G$. If $g,h\in G$, then
$\Sigma_1(g)\simeq \Sigma_1(h)$ if and only if $g$ and $h$ lie in
the same left $g_1^{-1}supp(\Delta)g_1$-coset of $G$. Thus the
number of isomorphism types of graded simple left $A$-modules is
$[G:g_1^{-1}supp(\Delta)g_1]=[G:supp(\Delta)]$.\\
(ii) We have that $\Sigma_1(g_j^{-1}g_1)\simeq
\Sigma_1(g_p^{-1}g_1)$ if and only if $g_1^{-1}g_jg_p^{-1}g_1\in
g_1^{-1}supp(\Delta)g_1$, which is the same to $g_jg_p^{-1}\in
supp(\Delta)$. Now everything is clear.\\
(iii) It follows from (ii).
\end{proof}

In a similar way we can describe the graded simple right
$A$-modules. Thus for any $1\leq i\leq n$ let $\Gamma_i=(\Delta
\ldots \Delta)$, regarded as a matrix of type $1\times n$ with
entries in $\Delta$. $\Gamma_i$ is a graded right $A$-module with
action given by usual matrix multiplication, and $G$-grading given
such that the homogeneous component of degree $\sigma$ is the set
of all elements of $\Gamma_i$ with elements from
$\Delta_{g_i\sigma g_j^{-1}}$ on the $j$th spot. Then each
$\Gamma_i$ is a graded simple right $A$-module  and $A\simeq
\oplus_{1\leq i\leq n}\Gamma_i$. Moreover $\Gamma_i\simeq
(g_1^{-1}g_i)\Gamma_1$ for any $i$. Also $(\tau)\Gamma_1\simeq
\Gamma_1$ if and only if $\tau \in g_1^{-1} supp(\Delta) g_1$. As
a consequence, the number of isomorphism types of graded simple
right $A$-modules is $[G:supp(\Delta)]$, and on the other hand,
$\Gamma_i\simeq \Gamma_j$ if and only if $g_j\in supp(\Delta)g_i$,
so the number of isomorphism types of graded simple right
$A$-modules that embed into $A$  is the number of the right
$supp(\Delta)$-cosets of $G$ containing at least one $g_i$.

We note that $A$ has the same number of isomorphism types of
graded simple left modules as the number of isomorphism types of
graded simple right modules, and then the same fact is true for
any graded semisimple ring, which is a finite product of such
$A$'s.

\section{Graded Jacobson radical and graded singular radical}
\label{sectionsingular}

Let $R=\oplus_{g\in G}R_g$ be a $G$-graded ring.  The graded
Jacobson radical of $R$ is $$J^{\rm gr}(R)=\bigcap \{ I\; |\; I
\mbox{ is a maximal graded left ideal of } R\}.$$ It turns out
that $J^{\rm gr}(R)$ is a graded ideal of $R$, and the same thing
is obtained by taking the intersection of all maximal graded right
ideals of $R$. One sees that $J^{\rm gr}(R)$ is the intersection
of the annihilators of all graded simple left (right) $R$-modules.
For any $g\in G$, the homogeneous component of degree $g$ of
$J^{\rm gr}(R)$ consists of all elements $y\in R_g$ such that
$1-xy$ is invertible for any $x\in R_{g^{-1}}$. We note that
$J^{\rm gr}(R)\cap R_\varepsilon=J(R_\varepsilon)$ and $J^{\rm
gr}(R)$ is the largest graded ideal of $R$ whose intersection with
$R_\varepsilon$ is $J(R_\varepsilon)$, see \cite[Section
2.9]{nvo}. As in the un-graded case, a graded ring $R$ is graded
semisimple if and only if it is graded left Artinian and $J^{\rm
gr}(R)=0$; in particular $R/J^{\rm gr}(R)$ is graded semisimple
for any graded left Artinian ring $R$.

\begin{remark} \label{socluanulator}
For later use, we note that if $R$ is graded left Artinian, then
for any graded left $R$-module $M$, the socle ${\rm soc}^{\rm
gr}(M)$ of $M$, i.e., the sum of all graded simple submodules of
$M$, is given by ${\rm soc}^{\rm gr}(M)=\{ m\in M|J^{\rm
gr}(R)m=0\}$.
\end{remark}

If $M$ is a graded left $R$-module and $N$ is a graded submodule
of $M$, it is known that $N$ is essential in $M$ as an
$R$-submodule (i.e., $N$ intersects non-trivially any non-zero
$R$-submodule of $M$) if and only if $N\cap N'\neq 0$ for any
non-zero graded submodule $N'$ of $M$, see \cite[Proposition
2.3.5]{nvo}. This is equivalent to the fact that for any non-zero
homogeneous element $m\in M$, there exists a homogeneous $r\in R$
such that $0\neq rm\in N$. Thus we will be able to check that a
graded submodule is essential working only with homogeneous
elements.

For any $g\in G$ define
$$\mathcal{Z}^{\rm gr}(_RR)_g=\{ x\in R_g\; |\; {\rm ann}_\ell(x) \mbox{ is
essential in }_RR\},$$ which is obviously an additive subgroup of
$R_g$, and let $\mathcal{Z}^{\rm gr}(_RR)=\sum_{g\in
G}\mathcal{Z}^{\rm gr}(_RR)_g$.

If $x_g\in \mathcal{Z}^{\rm gr}(_RR)_g$ and $r_h\in R_h$, then
$r_hx_g\in \mathcal{Z}^{\rm gr}(_RR)_{hg}$. Indeed, if $p\in G$
and $a_p\in R_p\setminus {\rm ann}_\ell(r_hx_g)$, then
$a_pr_hx_g\neq 0$, so $a_pr_h\in R_{ph}\setminus {\rm
ann}_\ell(x_g)$. Since ${\rm ann}_\ell(x_g)$ is essential in
$_RR$, there exists $q\in G$ and $b_q\in R_q$ with $0\neq
b_qa_pr_h\in {\rm ann}_\ell(x_g)$. Then $b_qa_pr_hx_g=0$, so
$b_qa_p\in {\rm ann}_\ell(r_hx_g)$, and $b_qa_p\neq 0$. This shows
that ${\rm ann}_\ell(r_hx_g)$ is essential in $_RR$.

Thus $\mathcal{Z}^{\rm gr}(_RR)$ is a graded right ideal of $R$,
and it is clear that it is also a graded left ideal, since ${\rm
ann}_\ell(x)\subset {\rm ann}_\ell(xr)$ for any $x,r\in R$.

\begin{proposition} \label{singularnilpotent}
If $R$ is graded left Noetherian then $\mathcal{Z}^{\rm gr}(_RR)$
is nilpotent.
\end{proposition}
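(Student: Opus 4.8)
The plan is to carry over to the graded setting the classical argument (Mewborn--Winton) that the left singular ideal of a left Noetherian ring is nilpotent, with ``graded left Noetherian'' supplying the ascending chain condition one needs. Write $Z=\mathcal{Z}^{\rm gr}(_RR)$; by the discussion preceding the statement $Z$ is a graded two-sided ideal of $R$, hence so is every power $Z^k$, and hence so is ${\rm ann}_\ell(Z^k)=\{r\in R\mid rZ^k=0\}$. The latter is in fact a graded left ideal, since it equals $\bigcap\{{\rm ann}_\ell(c)\mid c\in Z^k\ \mbox{homogeneous}\}$ and each ${\rm ann}_\ell(c)$ with $c$ homogeneous is a graded left ideal. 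One has ${\rm ann}_\ell(Z^k)\subseteq {\rm ann}_\ell(Z^{k+1})$ for every $k$, so this increasing chain of graded left ideals stabilises; I fix $n\ge 1$ with ${\rm ann}_\ell(Z^m)={\rm ann}_\ell(Z^n)$ for all $m\ge n$, in particular ${\rm ann}_\ell(Z^{2n})={\rm ann}_\ell(Z^n)$. It then suffices to prove $Z^n=0$.

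Suppose $Z^n\ne 0$. If $Z^{2n}=0$ then ${\rm ann}_\ell(Z^{2n})=R$, hence ${\rm ann}_\ell(Z^n)=R$ and $Z^n=0$, a contradiction; so $Z^{2n}=Z^nZ^n\ne 0$, and since $Z^n$ is graded there is a homogeneous $x\in Z^n$ with $xZ^n\ne 0$. Thus the family of graded left ideals $\{{\rm ann}_\ell(x)\mid x\in Z^n\ \mbox{homogeneous},\ xZ^n\ne 0\}$ is non-empty, and by the graded Noetherian hypothesis it has a maximal element ${\rm ann}_\ell(a)$, with $a\in Z^n$ homogeneous, $a\ne 0$ and $aZ^n\ne 0$. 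The crucial step is to show $(aw)Z^n=0$ for every homogeneous $w\in Z^n$. This is clear when $aw=0$. If $aw\ne 0$, then $aw$ is a non-zero homogeneous element of $Z^{2n}\subseteq Z^n$, and ${\rm ann}_\ell(a)\subseteq {\rm ann}_\ell(aw)$ because $ra=0$ implies $r(aw)=0$. Were $(aw)Z^n\ne 0$, then ${\rm ann}_\ell(aw)$ would lie in the above family, so maximality would give ${\rm ann}_\ell(a)={\rm ann}_\ell(aw)$; then $Ra\cap {\rm ann}_\ell(w)=0$, since $ra\in {\rm ann}_\ell(w)$ forces $r\in {\rm ann}_\ell(aw)={\rm ann}_\ell(a)$, i.e.\ $ra=0$. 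But $0\ne w\in Z^n\subseteq Z$ is homogeneous, so ${\rm ann}_\ell(w)$ is essential in $_RR$ and cannot meet the non-zero left ideal $Ra$ in $0$; this contradiction proves $(aw)Z^n=0$.

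Since the homogeneous elements span $Z^n$, this yields $aZ^{2n}=a(Z^nZ^n)=0$, i.e.\ $a\in {\rm ann}_\ell(Z^{2n})={\rm ann}_\ell(Z^n)$, so $aZ^n=0$, contradicting $aZ^n\ne 0$. Hence $Z^n=0$, i.e.\ $\mathcal{Z}^{\rm gr}(_RR)$ is nilpotent. The part I expect to need the most care is essentially organisational: checking that $Z$, its powers, and the annihilators ${\rm ann}_\ell(Z^k)$ are all graded, so that the graded left Noetherian hypothesis legitimately applies both to the chain and to the family from which $a$ is extracted. The one genuinely essential idea, exactly as in the ungraded case, is to impose the side condition $xZ^n\ne 0$ when selecting $a$: it makes the family non-empty, it is preserved by the passage $a\mapsto aw$ thanks to $aw\in Z^n$, and it is precisely what turns the stabilisation ${\rm ann}_\ell(Z^{2n})={\rm ann}_\ell(Z^n)$ into a contradiction.
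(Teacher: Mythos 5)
Your proof is correct and follows essentially the same approach as the paper's: both adapt the Mewborn--Winton argument, first choosing a power of the singular ideal whose left annihilator has stabilised, then taking a maximal annihilator in a family of annihilators of homogeneous elements (available by the graded ACC), and finally using essentiality of ${\rm ann}_\ell(w)$ for homogeneous $w$ in the singular ideal to force a contradiction. The only differences are cosmetic: the paper lets the ``test'' element $z$ range over all of $R$ (homogeneous, with $zI^m\neq 0$) and multiplies by a single homogeneous $a_h\in I$ to get $x_g I^{m+1}=0$, whereas you restrict the family to $x\in Z^n$ and multiply by $w\in Z^n$ to get $a Z^{2n}=0$; both then contradict the stabilisation. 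Your version of the essentiality step (showing $Ra\cap{\rm ann}_\ell(w)=0$ and noting this is impossible) is logically equivalent to the paper's (producing $0\neq y_p x_g\in{\rm ann}_\ell(a_h)$ to get a strict inclusion of annihilators).
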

\begin{proof}
Denote $I=\mathcal{Z}^{\rm gr}(_RR)$. Since $R$ is graded left
Noetherian, there is a positive integer $m$ such that ${\rm
ann}_\ell(I^m)={\rm ann}_\ell(I^{m+1})=\ldots $. We show that
$I^m=0$. Indeed, otherwise the family of graded left ideals $\{
{\rm ann}_\ell(z)| z\notin {\rm ann}_\ell(I^m)\mbox{ and }z\mbox{
is homogeneous}\}$ has a maximal element ${\rm ann}_\ell(x_g)$ for
some $g\in G$ and $x_g\in R_g\setminus {\rm ann}_\ell(I^m)$.

If $a_h\in I\cap R_h$, then ${\rm ann}_\ell(a_h)$ is essential in
$_RR$, so ${\rm ann}_\ell(a_h)\cap Rx_g\neq 0$. Pick $0\neq
y_px_g\in {\rm ann}_\ell(a_h)$. Now $y_p\notin {\rm ann}_\ell(x_g)$
and $y_p\in {\rm ann}_\ell(x_ga_h)$, so ${\rm
ann}_\ell(x_g)\subsetneqq {\rm ann}_\ell(x_ga_h)$. The maximality of
${\rm ann}_\ell(x_g)$ shows that $x_ga_h\in {\rm ann}_\ell(I^m)$, so
$x_ga_hI^m=0$. As this happens for any homogeneous $a_h$ in $I$, we
get that $x_gI^{m+1}=0$, so then $x_g\in {\rm
ann}_\ell(I^{m+1})={\rm ann}_\ell(I^m)$, a contradiction.
\end{proof}

At this point we need a result that will be used several times in
the sequel.

\begin{theorem} \label{gradedBaer}
{\rm (The graded version of Baer's Theorem, \cite[Corollary
2.4.8]{nvo})} Let $M$ be a graded left $R$-module. Then $M$ is
graded injective, i.e., it is an injective object in the category
$R-gr$, if and only if for any graded left ideal $I$ of $R$, any
$\sigma \in G$ and any morphism $f:I\ra M$ of degree $\sigma$ of
graded left $R$-modules, there exists $m_\sigma\in M_\sigma$ such
that $f(r)=rm_\sigma$ for any $r\in I$.
\end{theorem}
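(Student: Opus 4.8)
The plan is to prove the trivial implication by unwinding definitions, and the substantial one by the classical Zorn's lemma argument behind Baer's criterion, executed entirely with homogeneous elements so that every auxiliary map stays graded of the right degree.

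For the ``only if'' direction I would argue as follows. If $M$ is graded injective and $f\colon I\ra M$ is a morphism of degree $\sigma$ from a graded left ideal $I$, then $f$ is a morphism $I\ra M(\sigma)$ in $R-gr$; since shifting is an isomorphism of categories, $M(\sigma)$ is graded injective as well, and the inclusion $I\hookrightarrow R$ is a monomorphism in $R-gr$, so $f$ extends to some $\tilde f\colon R\ra M(\sigma)$ in $R-gr$. Then $m_\sigma:=\tilde f(1)\in M(\sigma)_\varepsilon=M_\sigma$ works, because $f(r)=\tilde f(r)=r\tilde f(1)=rm_\sigma$ for $r\in I$.

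For the ``if'' direction, it suffices to show $M$ has the extension property along an arbitrary monomorphism $N\hookrightarrow L$ in $R-gr$. I would form the poset of pairs $(N',g')$ with $N\subseteq N'\subseteq L$ a graded submodule and $g'\colon N'\ra M$ a morphism in $R-gr$ extending the given $g\colon N\ra M$; chains have upper bounds (union of the submodules, which is again graded, with the union of the maps), so Zorn gives a maximal $(N_0,g_0)$. Then I would show $N_0=L$: if not, I pick a \emph{homogeneous} $x\in L_\tau\setminus N_0$ (possible since $N_0$ is a proper graded submodule), set $I=\{r\in R\mid rx\in N_0\}$ — a graded left ideal — and define $h\colon I\ra M$, $h(r)=g_0(rx)$. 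For $r\in R_g\cap I$ one has $rx\in N_0\cap L_{g\tau}$, so $h(r)\in M_{g\tau}$, i.e. $h$ has degree $\tau$; the hypothesis then yields $m_\tau\in M_\tau$ with $h(r)=rm_\tau$ on $I$. I would extend $g_0$ to $g_1\colon N_0+Rx\ra M$ by $g_1(n+rx)=g_0(n)+rm_\tau$, checking it is well defined (if $n+rx=n'+r'x$ then $r-r'\in I$ and $(r-r')m_\tau=h(r-r')=g_0((r-r')x)$), $R$-linear, and graded (an element of $(N_0+Rx)_h$ is $n+yx$ with $n\in(N_0)_h$, $y\in R_{h\tau^{-1}}$, and $g_0(n)+ym_\tau\in M_h+M_{h\tau^{-1}}M_\tau\subseteq M_h$). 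This contradicts maximality, so $N_0=L$ and $M$ is graded injective.

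The only genuinely delicate point, and the sole place where the argument departs from the ungraded Baer criterion, is the degree bookkeeping: one must verify that $h$ has degree exactly $\tau$ and that the extension $g_1$ again lands in the prescribed homogeneous components, which is precisely why $x$ must be chosen homogeneous. Everything else — chains having upper bounds, $I$ being a graded left ideal, and the well-definedness and $R$-linearity of $g_1$ — is routine.
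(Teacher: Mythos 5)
The paper does not prove this statement: it simply cites it as Corollary 2.4.8 of N\u{a}st\u{a}sescu--van Oystaeyen, \emph{Methods of graded rings}. So there is no in-paper proof to compare against. Your argument is nevertheless correct and is the expected one: it is the classical Zorn's-lemma proof of Baer's criterion, run entirely with homogeneous data so that degrees are tracked. The ``only if'' direction correctly exploits that the shift $M\mapsto M(\sigma)$ is an isomorphism of $R\text{-}gr$ (so $M(\sigma)$ stays injective) and that $M(\sigma)_\varepsilon=M_\sigma$. In the ``if'' direction, the key degree computations are right: for a homogeneous $x\in L_\tau\setminus N_0$ the transporter ideal $I=\{r:rx\in N_0\}$ is graded, $h(r)=g_0(rx)$ has degree exactly $\tau$, and the extension $g_1(n+yx)=g_0(n)+ym_\tau$ lands in $M_h$ for $n\in (N_0)_h$, $y\in R_{h\tau^{-1}}$. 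The well-definedness check via $I$ is also sound. One small remark: the very last step implicitly uses that in a Grothendieck category (such as $R\text{-}gr$) the extension property along all monomorphisms is the definition of injectivity, so once $N_0=L$ is forced for every $N\hookrightarrow L$, you are done --- this is fine, but it is the one place where you rely on a general categorical fact rather than raw computation.
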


We say that the graded ring $R$ is graded left injective if it is
graded injective when regarded as a left graded $R$-module. On the
other hand, $R$ is called graded von Neumann regular if for any
homogeneous element $a$ of $R$, there exists $b\in R$ (which can
be supposed to be homogeneous) such that $a=aba$.

\begin{proposition} \label{Jacobson=singular}
Let $R$ be a graded ring which is graded left injective. Then
$\mathcal{Z}^{\rm gr}(_RR)=J^{\rm gr}(R)$ and $R/J^{\rm gr}(R)$ is
graded von Neumann regular.
\end{proposition}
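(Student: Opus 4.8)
The plan is to establish the two inclusions $J^{\rm gr}(R)\subseteq\mathcal{Z}^{\rm gr}(_RR)$ and $\mathcal{Z}^{\rm gr}(_RR)\subseteq J^{\rm gr}(R)$ separately, and then to obtain graded von Neumann regularity of $R/J^{\rm gr}(R)$ from a variant of the argument for the second inclusion. The only tool needed beyond elementary manipulations is the graded Baer theorem (Theorem~\ref{gradedBaer}), applied with $M=R$ since $R$ is graded left injective; I shall also use that $J^{\rm gr}(R)\cap R_\varepsilon=J(R_\varepsilon)$, that $\mathcal{Z}^{\rm gr}(_RR)$ is a two-sided graded ideal, and that essentiality of a graded submodule may be tested against graded submodules only. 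Since both $J^{\rm gr}(R)$ and $\mathcal{Z}^{\rm gr}(_RR)$ are graded, in each step it suffices to treat a homogeneous element $a\in R_g$.

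For $J^{\rm gr}(R)\subseteq\mathcal{Z}^{\rm gr}(_RR)$: given $a\in J^{\rm gr}(R)\cap R_g$, I would argue by contradiction, assuming ${\rm ann}_\ell(a)$ is not essential in $_RR$. Then there is a nonzero graded left ideal $I$ with $I\cap{\rm ann}_\ell(a)=0$, so $x\mapsto xa$ is injective on $I$ and $\psi\colon Ia\ra R$, $\psi(xa)=x$, is a well-defined morphism of graded left $R$-modules of degree $g^{-1}$. Applying Theorem~\ref{gradedBaer} gives $d\in R_{g^{-1}}$ with $x=xad$ for all $x\in I$, i.e.\ $I\subseteq{\rm ann}_\ell(1-ad)$. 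But $ad\in J^{\rm gr}(R)\cap R_\varepsilon=J(R_\varepsilon)$, so $1-ad$ is invertible in $R_\varepsilon$, hence in $R$, forcing $I=0$; a contradiction.

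For $\mathcal{Z}^{\rm gr}(_RR)\subseteq J^{\rm gr}(R)$: let $a\in\mathcal{Z}^{\rm gr}(_RR)\cap R_g$. By the description of the homogeneous components of $J^{\rm gr}(R)$ recalled above, it is enough to show that $1-c$ is invertible for $c:=xa$ with $x\in R_{g^{-1}}$ arbitrary. Since $\mathcal{Z}^{\rm gr}(_RR)$ is a graded ideal, $c\in\mathcal{Z}^{\rm gr}(_RR)\cap R_\varepsilon$, so ${\rm ann}_\ell(c)$ is essential; as ${\rm ann}_\ell(1-c)\cap{\rm ann}_\ell(c)=0$, this yields ${\rm ann}_\ell(1-c)=0$, hence $R(1-c)\ra R$, $r(1-c)\mapsto r$, is a well-defined morphism of degree $\varepsilon$, and Theorem~\ref{gradedBaer} produces $d\in R_\varepsilon$ with $(1-c)d=1$. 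The crucial point is to upgrade this right inverse to a genuine inverse: from $(1-c)d=1$ we get $d=1+cd$ with $cd\in\mathcal{Z}^{\rm gr}(_RR)\cap R_\varepsilon$, so the same intersection trick gives ${\rm ann}_\ell(d)={\rm ann}_\ell(1+cd)=0$; then $(d(1-c)-1)d=d((1-c)d)-d=d-d=0$ forces $d(1-c)=1$, so $1-c$ is a unit and $a\in J^{\rm gr}(R)$.

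Finally, for the regularity statement: given a homogeneous element $\bar a\in R/J^{\rm gr}(R)$, lift it to $a\in R_g$, put $L={\rm ann}_\ell(a)$, and choose by Zorn's lemma a graded left ideal $I$ maximal with respect to $I\cap L=0$; a standard complement argument then shows $L\oplus I$ is essential in $_RR$. As in the previous step, $\psi\colon Ia\ra R$, $xa\mapsto x$, extends by Theorem~\ref{gradedBaer} to right multiplication by some $d\in R_{g^{-1}}$, so $x(1-ad)=0$ for $x\in I$, while $l(1-ad)=l-(la)d=l$ for $l\in L$; hence $(l+x)(1-ad)a=la=0$ for all $l\in L$, $x\in I$, i.e.\ $L\oplus I\subseteq{\rm ann}_\ell(a-ada)$. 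Therefore $a-ada\in\mathcal{Z}^{\rm gr}(_RR)=J^{\rm gr}(R)$ by the first part, so $\bar a=\bar a\,\bar d\,\bar a$ with $\bar d$ homogeneous of degree $g^{-1}$, which shows $R/J^{\rm gr}(R)$ is graded von Neumann regular. The main obstacle is the invertibility upgrade in the second inclusion: right-invertibility of $1-c$ alone is not sufficient (failure of Dedekind finiteness in general), and one must exploit that $c$, and hence $cd$, lies in $\mathcal{Z}^{\rm gr}(_RR)$; after that, the only remaining care needed is the bookkeeping of degrees and shifts.
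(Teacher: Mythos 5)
Your proof is correct. The first inclusion $J^{\rm gr}(R)\subseteq\mathcal{Z}^{\rm gr}(_RR)$ and the von Neumann regularity argument are essentially the same as the paper's, up to cosmetic differences (the paper extends the inclusion $I\hookrightarrow R$ through $\varphi(a)=ax_g$ rather than extending the ``inverse'' map $Ia\to R$, $xa\mapsto x$, but this is the same computation, and the paper's $H$, $b$ become your $I$, $d$ in the regularity step).

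The genuinely different part is the inclusion $\mathcal{Z}^{\rm gr}(_RR)\subseteq J^{\rm gr}(R)$, and the two routes deserve a comparison. The paper reduces the problem to degree $\varepsilon$: it shows $\mathcal{Z}^{\rm gr}(_RR)\cap R_\varepsilon=J(R_\varepsilon)$ by proving that $1-x$ is \emph{right} invertible for every $x$ in the ideal $\mathcal{Z}^{\rm gr}(_RR)\cap R_\varepsilon$ of $R_\varepsilon$, which suffices by the standard lemma that an ideal $I$ of a ring $A$ with $1-x$ right invertible for all $x\in I$ satisfies $I\subseteq J(A)$; it then invokes the fact that $J^{\rm gr}(R)$ is the largest graded ideal whose intersection with $R_\varepsilon$ is $J(R_\varepsilon)$. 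You instead work directly with a homogeneous element $a\in\mathcal{Z}^{\rm gr}(_RR)\cap R_g$ of arbitrary degree and prove $1-xa$ is a two-sided unit for every $x\in R_{g^{-1}}$, matching the componentwise description of $J^{\rm gr}(R)_g$; for this you must upgrade the right inverse $d$ of $1-c$ to a two-sided inverse, which you do by a nice bootstrap: rewrite $d=1+cd$, observe $cd$ again lies in $\mathcal{Z}^{\rm gr}(_RR)\cap R_\varepsilon$, hence ${\rm ann}_\ell(d)=0$, and conclude $d(1-c)=1$ from $(d(1-c)-1)d=0$. Both arguments are valid: the paper's is slicker because the ``ideal with $1-x$ right invertible'' lemma makes the invertibility upgrade invisible, while yours is more self-contained and avoids relying on the maximality characterization of $J^{\rm gr}(R)$ relative to $R_\varepsilon$.
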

\begin{proof}
Let $x_g\in J^{\rm gr}(R)_g$. We prove that ${\rm ann}_\ell(x_g)$
is essential in $_RR$, and this will show that $J^{\rm
gr}(R)\subset \mathcal{Z}^{\rm gr}(_RR)$. Indeed, if $I$ is a
graded left ideal of $R$ with ${\rm ann}_\ell(x_g)\cap I=0$, then
the map $\varphi:I\ra R(g), \varphi (a)=ax_g$, is an injective
morphism of graded left $R$-modules. Let $i:I\ra R$ be the
inclusion map. Since $R$ is injective in $R-gr$, there is a
morphism $\psi:R(g)\ra R$ of graded left $R$-modules such that
$\psi\varphi =i$. Then $a=\psi \varphi (a)=\psi
(ax_g)=ax_g\psi(1)$, or $a(1-x_g\psi(1))=0$ for any $a\in I$. As
$\psi(1)\in R_{g^{-1}}$ and $x_g\in J^{\rm gr}(R)_g$, we see that
$1-x_g\psi(1)$ is invertible, so $a=0$. Thus $I=0$.

Now we show that $\mathcal{Z}^{\rm gr}(_RR)\cap
R_\varepsilon=J(R_\varepsilon)$, and the maximality of $J^{\rm
gr}(R)$ among all graded ideals $I$ with the property that $I\cap
R_e\varepsilon=J(R_\varepsilon)$ implies that $\mathcal{Z}^{\rm
gr}(_RR)\subset J^{\rm gr}(R)$. Clearly $J(R_\varepsilon)=J^{\rm
gr}(R)\cap R_\varepsilon\subset \mathcal{Z}^{\rm gr}(_RR)\cap
R_\varepsilon$. Now let $x\in \mathcal{Z}^{\rm gr}(_RR)\cap
R_\varepsilon$. Then ${\rm ann}_\ell(x)$ is essential in $_RR$,
and clearly ${\rm ann}_\ell(x)\cap {\rm ann}_\ell (1-x)=0$, so
${\rm ann}_\ell (1-x)=0$. Then the map $\varphi:R\ra R(1-x)$ is an
isomorphism in $R-gr$. Since $R$ is graded left injective, and
$\varphi^{-1}:R(1-x)\ra R$ is a morphism in $R-gr$, there exists
$y\in R_\varepsilon$ such that $\varphi^{-1}$ is the right
multiplication by $y$. Then $r=\varphi^{-1}(r(1-x))=r(1-x)y$ for
any $r\in R$, so $(1-x)y=1$. Thus $1-x$ is right invertible for
any $x$ in the ideal $\mathcal{Z}^{\rm gr}(_RR)\cap R_\varepsilon$
of $R_\varepsilon$, so $\mathcal{Z}^{\rm gr}(_RR)\cap
R_\varepsilon\subset J(R_\varepsilon)$.

Next we show that $R/J^{\rm gr}(R)$ is graded von Neumann regular.
Let $a\in R_g$, and let $H$ be a graded left ideal of $R$ maximal
with the property that $H\cap {\rm ann}_\ell(a)=0$. Then $H+{\rm
ann}_\ell (a)$ is essential in $_RR$, and the map $\varphi:H\ra
Ha, \varphi(x)=xa$, is a bijective morphism of degree $g$ of
graded left $R$-modules. If $j:H\ra R$ is the inclusion map, then
$j\varphi^{-1}:Ha\ra R$ is a morphism of degree $g^{-1}$, and the
injectivity of $R$ in $R-gr$ shows that $j\varphi^{-1}$ is the
right multiplication by some $b\in R_{g^{-1}}$. Hence
$x=\varphi^{-1}(xa)=xab$, and then $x(a-aba)=(x-xab)a=0$ for any
$x\in H$, showing that $H(a-aba)=0$. Then clearly $(H+{\rm
ann}_\ell (a))(a-aba)=0$, i.e. $H+{\rm ann}_\ell (a)\subset {\rm
ann}_\ell (a-aba)$, showing that $a-aba\in \mathcal{Z}^{\rm
gr}(_RR)_g=J^{\rm gr}(R)_g\subset J^{\rm gr}(R)$. We conclude that
$\hat{a}=\hat{a}\hat{b}\hat{a}$ in $R/J^{\rm gr}(R)$.
\end{proof}

\begin{lemma} \label{regularelement}
Let $a\in R_g$. Then $Ra$ is a direct summand as a graded left
submodule of $R$ if and only if there exists $b\in R_{g^{-1}}$ such
that $a=aba$.
\end{lemma}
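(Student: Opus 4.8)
The plan is to reproduce, in the graded setting, the classical equivalence ``$Ra$ is a direct summand of $_RR$ if and only if $a$ is von Neumann regular'', keeping track throughout of the homogeneous degrees so that the relevant idempotent lives in $R_\varepsilon$ and the element $b$ lives in $R_{g^{-1}}$.

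For the implication ``$\Leftarrow$'', suppose $a=aba$ with $b\in R_{g^{-1}}$, and set $e=ba$. Since $b\in R_{g^{-1}}$ and $a\in R_g$ we have $e\in R_\varepsilon$, and $e^2=b(aba)=ba=e$, so $e$ is a homogeneous idempotent of degree $\varepsilon$. Consequently the right multiplication by $e$ is an idempotent endomorphism of $_RR$ in the category $R-gr$, which yields the decomposition $R=Re\oplus R(1-e)$ into graded left submodules. It then suffices to check $Ra=Re$: on one hand $e=ba\in Ra$, so $Re\subseteq Ra$; on the other hand $a=aba=a(ba)=ae\in Re$, so $Ra\subseteq Re$. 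Hence $Ra=Re$ is a graded direct summand of $R$.

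For the implication ``$\Rightarrow$'', suppose $R=Ra\oplus L$ with $L$ a graded left submodule. Because both $Ra$ and $L$ are graded, taking homogeneous components of degree $\varepsilon$ gives $R_\varepsilon=(Ra)_\varepsilon\oplus L_\varepsilon$; write $1=e+f$ accordingly, with $e\in (Ra)_\varepsilon$ and $f\in L_\varepsilon$. For any $r\in Ra$ one has $r=re+rf$ with $re\in Ra$ and $rf\in L$, so by directness $r=re$; in particular $a=ae$. Finally, $e\in (Ra)_\varepsilon$ means $e=\sum_h c_ha$ with $c_h\in R_h$; comparing the degree-$\varepsilon$ components and using $a\in R_g$ shows $e=c_{g^{-1}}a$, so putting $b=c_{g^{-1}}\in R_{g^{-1}}$ we get $ba=e$ and therefore $aba=ae=a$.

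The computations are all routine; the only point needing attention --- the ``main obstacle'', such as it is --- is the degree bookkeeping: one must confirm each time that the idempotent produced is homogeneous of degree $\varepsilon$, and hence that $b$ can be taken in $R_{g^{-1}}$ and not merely in $R$. This is precisely what turns the module decomposition into one in $R-gr$, and in both directions it is achieved by extracting the appropriate homogeneous component.
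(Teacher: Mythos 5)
Your proof is correct and follows essentially the same approach as the paper's: in the forward direction you produce the idempotent $ba\in R_\varepsilon$ and identify $Ra=R(ba)$, and in the reverse direction you extract the degree-$\varepsilon$ idempotent $e\in(Ra)_\varepsilon$ from the decomposition $1=e+f$ and pull out its coefficient $b=c_{g^{-1}}\in R_{g^{-1}}$. The only cosmetic difference is that the paper states at once that $u,v$ are idempotents of degree $\varepsilon$, whereas you derive the needed consequence ($a=ae$) directly from the directness of the sum; both routes are equally short.
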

\begin{proof}
If $a=aba$ for some $b\in R_{g^{-1}}$, then $ba$ is an idempotent
in $R_\varepsilon$ and $Ra=Rba$, a direct summand of the graded
left $R$-module $R$.

Conversely, if $R=Ra\oplus I$ for some graded left ideal $I$, let
$1=u+v$ with $u,v$ idempotents of degree $\varepsilon$, $u\in Ra$,
$v\in I$. Then $u=ba$ for some $b\in R_{g^{-1}}$. Since $a=au+av$,
$au=aba\in Ra$ and $av\in I$, we get that $a=au=aba$.
\end{proof}

\begin{corollary}
If $R$ is graded semisimple then it is graded von Neumann regular.
\end{corollary}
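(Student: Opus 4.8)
The plan is to deduce this at once from Lemma~\ref{regularelement}, exactly mirroring the classical fact that a semisimple ring is von Neumann regular. Assume $R$ is graded semisimple. By definition this means the graded left module $_RR$ is a sum of graded simple submodules, equivalently the category $R-gr$ is semisimple; in a semisimple abelian category every subobject is a direct summand, so every graded submodule of $_RR$ is a direct summand as a graded left submodule. Now fix an arbitrary homogeneous element $a\in R_g$. Because $a$ is homogeneous, the left ideal $Ra$ is a \emph{graded} submodule of $_RR$: writing $r=\sum_h r_h$ one gets $ra=\sum_h r_h a$ with $r_h a\in R_{hg}$, so $Ra=\bigoplus_{k\in G}R_{kg^{-1}}a$ with $(Ra)_k=R_{kg^{-1}}a$. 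Hence $Ra$ is a direct summand of $_RR$ as a graded left submodule.

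Applying Lemma~\ref{regularelement} to $a$ then produces an element $b\in R_{g^{-1}}$ with $a=aba$. Since $a$ was an arbitrary homogeneous element of $R$ and $b$ is itself homogeneous (of degree $g^{-1}$), this is precisely the assertion that $R$ is graded von Neumann regular. There is essentially no obstacle in this argument; the only two points that deserve an explicit line are that $Ra$ really is a graded left ideal (this is exactly where homogeneity of $a$ enters, via $R_h a\subseteq R_{hg}$), and that the semisimplicity of the category $R-gr$ supplies the direct-summand hypothesis required by Lemma~\ref{regularelement}.
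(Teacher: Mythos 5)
Your proof is correct and follows exactly the route the paper intends: the corollary is stated immediately after Lemma~\ref{regularelement} precisely so that one can combine semisimplicity of $R$-gr (every graded submodule of $_RR$ is a direct summand) with the lemma applied to the graded left ideal $Ra$. Your explicit check that $Ra$ is graded and your note that $b$ comes out homogeneous of degree $g^{-1}$ are the right points to make precise.
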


Using Lemma \ref{regularelement}, one can follow the same approach
as in the non-graded case (for example as in \cite{goodearl}) to
obtain the following.

\begin{proposition} {\rm (\cite[Proposition 1]{hazrat})}
Let $R$ be a graded von Neumann regular ring. Then any finitely
generated graded left ideal of $R$ is a direct summand of $R$ in
$R-gr$.
\end{proposition}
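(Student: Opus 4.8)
The plan is to follow the classical argument for von Neumann regular rings, keeping everything homogeneous so as to reduce the statement to the principal case already settled in Lemma \ref{regularelement}.

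First I would reduce to graded left ideals with \emph{finitely many homogeneous generators}. If $I$ is a finitely generated graded left ideal, say $I=Rx_1+\cdots+Rx_k$, then decomposing each $x_i$ into its homogeneous components and observing that all of these components again lie in $I$ (since $I$ is graded), we see that $I$ is generated by a finite set of homogeneous elements. So it suffices to prove, by induction on $m$, that $Ra_1+\cdots+Ra_m$ is a direct summand of $R$ in $R-gr$ whenever $a_1,\ldots,a_m$ are homogeneous; the case $m=0$ is trivial.

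The inductive step rests on the following claim: if $I$ is a graded left ideal of $R$ which is a direct summand of $R$ in $R-gr$, and $a\in R$ is homogeneous, then $I+Ra$ is again a direct summand of $R$ in $R-gr$. To prove it, write $R=I\oplus K$ with $K$ a graded left ideal, and decompose $a=a'+a''$ with $a'\in I$, $a''\in K$; since $I$ and $K$ are graded and $a$ is homogeneous, $a'$ and $a''$ are homogeneous of the same degree, say $g$, as $a$. One checks that $I+Ra=I+Ra''$ (because $a=a'+a''$ and $a''=a-a'$), and since $I\cap Ra''\subseteq I\cap K=0$, in fact $I+Ra=I\oplus Ra''$. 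Now $a''$ is homogeneous, so by Lemma \ref{regularelement} together with graded von Neumann regularity there is $b\in R_{g^{-1}}$ with $a''=a''ba''$, hence $Ra''$ is a direct summand: $R=Ra''\oplus L$ for some graded left ideal $L$. As $Ra''\subseteq K$, the modular law yields $K=Ra''\oplus(K\cap L)$, and therefore $R=I\oplus K=(I\oplus Ra'')\oplus(K\cap L)=(I+Ra)\oplus(K\cap L)$, proving the claim.

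With the claim in hand the induction is immediate: $0$ is a direct summand, and adjoining the homogeneous generators $a_1,\ldots,a_m$ one at a time keeps us within the class of graded direct summands of $R$, so $Ra_1+\cdots+Ra_m$ is a direct summand of $R$ in $R-gr$. The only thing requiring a little care is to confirm at each stage that the complements produced are genuinely \emph{graded} submodules, but this is automatic here: the pieces arising from the graded decomposition $R=I\oplus K$ and from the modular law are all graded. I do not anticipate a real obstacle; the substantive content sits in Lemma \ref{regularelement}, and what remains is the standard summand-gluing argument.
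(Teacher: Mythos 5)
Your proof is correct and is exactly the argument the paper has in mind: the paper does not write out a proof but points to the standard von Neumann regular ring argument (Goodearl) combined with Lemma \ref{regularelement}, and your inductive "summand-gluing" step, with the careful bookkeeping that the decomposition $a=a'+a''$ stays homogeneous and that the complements produced by Lemma \ref{regularelement} and by the modular law are graded, is precisely that standard argument transported to $R$-gr.
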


\begin{corollary} \label{semisimpleNoetherianregular}
A graded ring $R$ is graded semisimple if and only if it is graded
left Noetherian and graded von Neumann regular.
\end{corollary}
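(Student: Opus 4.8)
The plan is to prove the two implications separately. For the forward direction, suppose $R$ is graded semisimple. Then $R={\rm soc}^{\rm gr}_\ell(R)$ is a finite direct sum of graded simple left ideals, hence $R-gr$ is a semisimple Grothendieck category; in particular every graded left ideal is a direct summand of $_RR$, so ascending chains of graded left ideals stabilize and $R$ is graded left Noetherian. That $R$ is graded von Neumann regular is precisely the preceding Corollary (the one stated right after Lemma \ref{regularelement}), which itself follows by combining that Corollary's observation with Lemma \ref{regularelement}: for $a\in R_g$, the left ideal $Ra$ is a direct summand in $R-gr$ by semisimplicity, so there is $b\in R_{g^{-1}}$ with $a=aba$.

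For the converse, assume $R$ is graded left Noetherian and graded von Neumann regular. By the Proposition just quoted (\cite[Proposition 1]{hazrat}), every finitely generated graded left ideal is a direct summand of $_RR$ in $R-gr$. Since $R$ is graded left Noetherian, \emph{every} graded left ideal is finitely generated, hence a direct summand. Thus every short exact sequence of graded left $R$-modules with middle term $_RR$ built from a graded submodule splits, which forces $J^{\rm gr}(R)$ to be a direct summand; but $J^{\rm gr}(R)$ is superfluous in $_RR$ (it is contained in every maximal graded left ideal, so a complement to it would be a proper graded left ideal generating $R$, impossible), so $J^{\rm gr}(R)=0$. Combined with graded left Noetherian (which in particular gives graded left Artinian once we know the radical vanishes — see below), the characterization ``$R$ graded semisimple $\iff$ $R$ graded left Artinian and $J^{\rm gr}(R)=0$'' recalled in Section \ref{sectionsingular} finishes the argument.

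The one genuine obstacle is bridging from ``graded left Noetherian with $J^{\rm gr}(R)=0$'' to ``graded left Artinian'', since the cited semisimplicity criterion is phrased with the Artinian hypothesis. The clean route is to argue directly that $_RR$ is graded semisimple without invoking that criterion: every graded left ideal of $R$ is a direct summand (as shown above), so in particular $_RR$ is a direct sum of indecomposable graded submodules; Noetherianness forces this sum to be finite, and an indecomposable graded left ideal that is a direct summand and has the property that each of its graded submodules is again a direct summand must itself be graded simple. Hence $_RR$ is a finite direct sum of graded simple left ideals, i.e., $R$ is graded semisimple. Alternatively, one notes that a graded von Neumann regular ring has $\mathcal Z^{\rm gr}({}_RR)=J^{\rm gr}(R)=0$ and every graded left ideal a direct summand, and then the descending chain condition comes for free from the ascending one once all submodules are direct summands. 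I would present the direct argument, as it avoids any circularity with the Artinian formulation.
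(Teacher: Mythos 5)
Your proof is correct, and it takes essentially the route the paper intends: the Corollary is stated without proof, as an immediate consequence of the preceding Proposition (\cite[Proposition 1]{hazrat}) that in a graded von Neumann regular ring every finitely generated graded left ideal is a direct summand in $R$-gr, together with Lemma \ref{regularelement} for the forward direction.

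The one thing worth flagging is the middle paragraph of your converse. The parenthetical suggestion that ``graded left Noetherian plus $J^{\rm gr}(R)=0$ gives graded left Artinian'' is false as stated (think of $\mathbb{Z}$ with trivial grading: Noetherian, zero Jacobson radical, not Artinian); what saves you is the additional graded von Neumann regularity, and you rightly identify this as the genuine obstacle. Your direct replacement argument is the right one and is sound: by the Proposition plus graded left Noetherian, \emph{every} graded left ideal is a direct summand of $_RR$ in $R$-gr; graded Noetherian gives a finite decomposition $R=\oplus_i I_i$ into graded indecomposable left ideals; and for each $I_i$, any proper nonzero graded submodule $M$ would be a direct summand of $R$, hence by modularity a direct summand of $I_i$, contradicting indecomposability. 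So each $I_i$ is graded simple and $R$ is graded semisimple. I would drop the $J^{\rm gr}(R)=0$ detour entirely (it is true but unused) and present only this direct argument, which is precisely the step the authors leave to the reader.
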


\begin{proposition} \label{simpleizomorfe}
Let $R$ be a graded ring such that one of the following two
conditions is satisfied:\\
{\rm (a)} Any minimal graded left ideal is a direct summand of $R$
in
$R-gr$ (in particular if $R$ is graded regular von Neumann).\\
{\rm (b)} $R$ is graded left injective.\\
Then for any minimal graded left ideals $S$ and $S'$ of $R$ such
that $S'\simeq S(g)$ for some $g\in G$, there exists a homogeneous
$r\in R_g$ such that $S'=Sr$. As a consequence, the graded ideal
generated by any minimal graded ideal of $R$ is the whole
corresponding isoshift component of $R$.
\end{proposition}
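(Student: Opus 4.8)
The plan is to realize the given graded isomorphism as a right multiplication by a homogeneous element of $R$. Fix a graded isomorphism $\theta:S(g)\ra S'$, regard it as an $R$-module map $S\ra S'$, and let $\alpha:S\ra R$ be its composite with the inclusion $S'\hookrightarrow R$. Then $\alpha$ is an injective morphism of graded left $R$-modules which, once the shift $g$ is accounted for, is homogeneous, and $\alpha(S)=S'$. It therefore suffices to produce a homogeneous element $r$ of $R$ with $\alpha(x)=xr$ for all $x\in S$: indeed, then $S'=\alpha(S)=Sr$, and tracking the shift conventions shows that $r\in R_g$, as required.

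Under hypothesis (a), $S$ is a direct summand of ${}_RR$ in $R-gr$, say $R=S\oplus I$ with $I$ a graded left ideal; applying the $g$-shift functor gives $R(g)=S(g)\oplus I(g)$ in $R-gr$. I would extend $\alpha$ to a morphism $\widetilde{\alpha}:R(g)\ra R$ in $R-gr$ by letting it be $0$ on $I(g)$; being $R$-linear, $\widetilde{\alpha}$ is right multiplication by $r:=\widetilde{\alpha}(1)$, and $r$ is homogeneous because $1$ is a homogeneous element of $R(g)$. Restricting to $S$ gives $\alpha(x)=xr$, hence $S'=Sr$. The graded von Neumann regular case is subsumed in (a): a minimal graded left ideal $S$ equals $Ra$ for any nonzero homogeneous $a\in S$, and writing $a=aba$ with $b$ homogeneous, Lemma~\ref{regularelement} shows that $S=Ra$ is a direct summand of ${}_RR$. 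Under hypothesis (b) the argument is even shorter: $S$ is a graded left ideal of $R$ and $R$ is graded left injective, so the graded version of Baer's theorem (Theorem~\ref{gradedBaer}) applied to $\alpha$ directly produces the desired homogeneous $r$ with $\alpha(x)=xr$ on $S$.

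For the concluding assertion, let $\mathcal{U}$ be the isoshift component of $R$ determined by $S$, i.e. the sum of all graded left submodules of $R$ isomorphic to a shift of $S$; by the discussion in Section~\ref{sectiongradedsimple}, $\mathcal{U}$ is a two-sided graded ideal of $R$, and obviously $S\subseteq\mathcal{U}$. Since $S$ is a graded left ideal, the two-sided graded ideal it generates is $SR$, and $SR\subseteq\mathcal{U}$ because $\mathcal{U}$ is a right ideal containing $S$. Conversely, any graded left submodule $S'$ of $R$ with $S'\simeq S(\sigma)$ for some $\sigma\in G$ is graded simple, hence a minimal graded left ideal, so by the first part of the proposition $S'=Sr$ for some homogeneous $r\in R$, and therefore $S'\subseteq SR$; summing over all such $S'$ gives $\mathcal{U}\subseteq SR$. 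Hence $\mathcal{U}=SR$, which is exactly the claim that the graded ideal generated by the minimal graded left ideal $S$ is its isoshift component.

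The step I expect to need the most care is the bookkeeping with the shift functors and with the notion of a morphism of degree $\sigma$: one must verify that the homogeneous element $r$ produced in each case lies precisely in $R_g$, using the conventions $M(\sigma)_h=M_{h\sigma}$ and the identification of degree-$\sigma$ morphisms out of $R$ with right multiplications. Everything else is short; the only other thing to check, that hypothesis (a) applies to the monogenic ideal $S$ in the von Neumann regular case, is immediate from the quoted results.
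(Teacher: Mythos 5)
Your proof is correct. For hypothesis (b) your argument is the same as the paper's: view the given isomorphism as a shifted morphism $S\ra R$ and use graded injectivity to realize it as right multiplication by a homogeneous element. For hypothesis (a) you take a genuinely different route. The paper writes $S=Ru$ and $S'=Rv$ with $u,v$ idempotents in $R_\varepsilon$, takes the given degree-$g$ isomorphism $\varphi:S'\ra S$, introduces $a=\varphi(v)$ and $b=\varphi^{-1}(u)$, checks by hand that $aba=a$, $bab=b$ and $v=ab$, and then invokes minimality of $S'$ to exclude $Sb=0$ (which would force $b=0$, contradicting $v=ab\neq 0$), concluding $Sb=S'$. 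You instead split $R(g)=S(g)\oplus I(g)$, extend the morphism $S(g)\ra R$ by zero across $I(g)$, and identify the resulting endomorphism of $R$ with right multiplication by its value at $1$. This is cleaner and unifies (a) and (b) under one idea, namely "extend the shifted morphism $S\ra R$ to all of $R$ and use that graded self-maps of $R$ are right multiplications," with (a) supplying the extension by the splitting and (b) supplying it by injectivity; the paper's argument for (a) is more elementary and self-contained but requires the idempotent computation. Your treatment of the consequence about isoshift components is the intended one. One bookkeeping point, equally present in the paper's own proof: with the convention $M(\sigma)_h=M_{h\sigma}$, the identity $1$ sits in degree $g^{-1}$ of $R(g)$, so the element $r=\widetilde{\alpha}(1)$ you obtain actually lies in $R_{g^{-1}}$ rather than $R_g$; the paper's argument for (a) likewise returns $b=\varphi^{-1}(u)\in R_{g^{-1}}$, so the mismatch with the stated $R_g$ is in the proposition's wording rather than in either proof.
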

\begin{proof}
If (a) holds, let $S=Ru$ and $S'=Rv$ for some idempotents $u,v\in
R_\varepsilon$, and let $\varphi:S'\ra S$ be an isomorphism of
degree $g$. Then $a=\varphi (v)\in S_g=R_gu$, so $a=ru$ for some
$r\in R_g$. We see that $au=ru^2=ru=a$. Let $b=\varphi^{-1}(u)\in
R_{g^{-1}}$. Then
$v=\varphi^{-1}(a)=\varphi^{-1}(au)=a\varphi^{-1}(u)=ab$. On the
other hand $b=\varphi^{-1}(u)\in (Rv)_{g^{-1}}$, so $b=sv$ for
some $s\in R_{g^{-1}}$, and then $bv=sv^2=sv=b$. We obtain
$u=\varphi(b)=\varphi(bv)=b\varphi(v)=ba$. These also show that
$aba=au=a$ and $bab=bv=b$. Now $S'=Rv=Rab\supset Rbab=Sb$. Since
$S'$ is a minimal left graded ideal, we must have $Sb=S'$ or
$Sb=0$. In the first case we are done. In the second one we get
$Rbab=0$, so $Rb=0$, showing that $b=0$, which is impossible,
since $v=ab\neq 0$.

If (b) holds, let $\varphi: S\ra S'$ be an isomorphism of degree $g$
of graded left modules. We may regard $\varphi$ as a morphism of
degree $g$ from $S$ to $R$, and then the injectivity of $R$ shows
that this morphism is the right multiplication by some $r\in R_g$.
We get $S'=Im(\varphi)=Sr$.
\end{proof}

At this point we can explain why the concept of a graded simple
ring of \cite{nvo}, i.e., a graded ring which is a direct sum of
minimal graded left ideals, any two of them graded isomorphic up
to a shift, is equivalent to the one of a graded left Artinian
ring whose only graded two-sided ideals are 0 and the whole ring,
used in \cite{ek}. Indeed, if $R$ is isomorphic to a sum of
minimal graded left ideals, any two of them graded isomorphic up
to a shift, then it is graded semisimple, and there is just one
isoshift component, which is the whole of $R$. If $I$ is a
non-zero graded ideal of $R$, then $I$ contains a minimal graded
left ideal $S$. Then $I$ contains the whole isoshift component
associated to $S$, thus $I=R$. Obviously, $R$ is graded left
Artinian, as a finite direct sum of minimal graded left ideals.
Conversely, if the only graded simple ideals of $R$ are 0 and $R$,
and $R$ is graded left Artinian, then $J^{\rm gr}(R)$, a proper
graded ideal, is 0, so $R$ is graded semisimple. Since $R$ is the
direct sum of its isoshift components, each of them being a graded
ideal, we see that there is just one such component, so $R$ is a
sum of minimal graded left ideals, any of them isomorphic up to a
shift.

Now we can give another proof for the structure theorem for graded
simple rings which are graded left Artinian. Instead of using a
density result for graded simple modules, as it is done in the
proofs provided in \cite{ek} or \cite{nvo}, we use an argument
inspired by \cite[Theorem 3.11]{lam2}.

We first  recall some constructions with graded modules. If
$M,N\in R-gr$, for any $\sigma \in G$ we denote by ${\rm
HOM}_R(M,N)_\sigma$ the set of all morphisms of degree $\sigma$
from $M$ to $N$, and we denote ${\rm HOM}_R(M,N)=\sum_{\sigma\in
G}{\rm HOM}_R(M,N)_\sigma$, a direct sum of additive subgroups of
${\rm Hom}_R(M,N)$. In general, this sum may not be the whole
${\rm Hom}_R(M,N)$, but under certain conditions we have ${\rm
HOM}_R(M,N)={\rm Hom}_R(M,N)$, for instance if $M$ is finitely
generated (\cite[Corollary 2.4.4]{nvo}), or if both $M$ and $N$
have finite support (\cite[Corollary 2.4.5]{nvo}), in particular
in the case where the grading group $G$ is finite. If $N=M$, then
we denote ${\rm HOM}_R(M,M)$ by ${\rm END}_R(M)$, and this is a
$G$-graded ring with multiplication the opposite map composition.
Moreover, $M$ is a  graded right ${\rm END}_R(M)$-module with
action $mf=f(m)$ for any $m\in M$ and $f\in {\rm END}_R(M)$.
Similar considerations can be done for graded right modules, in
which case the multiplication of the endomorphism ring is just the
usual map composition.

\begin{theorem} \label{teoremagradedsimple}
Let $R$ be a $G$-graded ring which is graded simple and graded left
Artinian. Then there exist a graded division ring $\Delta$, a
positive integer $n$, and $g_1,\ldots,g_n\in G$ such that $R$ is
isomorphic to $M_n(\Delta)(g_1,\ldots,g_n)$.
\end{theorem}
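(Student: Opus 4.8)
The plan is to mimic the classical Artin--Wedderburn argument (as in \cite[Theorem 3.11]{lam2}), but carried out entirely inside the category $R-gr$. Since $R$ is graded simple and graded left Artinian, it is graded semisimple with a single isoshift type of graded simple modules; fix a minimal graded left ideal $S$ of $R$. As $R$ is a finite direct sum of minimal graded left ideals, each isomorphic to a shift of $S$, we may group the summands by isomorphism type and write $_RR\simeq \bigoplus_{k=1}^{n} S(h_k)$ for suitable $h_1,\ldots,h_n\in G$, after renaming the shifts appropriately. The first step is to identify $\Delta:={\rm END}_R(S)$: since $S$ is graded simple, every nonzero homogeneous endomorphism of $S$ is an isomorphism (a graded version of Schur's lemma — injectivity and surjectivity of a degree-$\tau$ endomorphism follow from minimality), so every nonzero homogeneous element of $\Delta$ is invertible, i.e.\ $\Delta$ is a $G$-graded division ring. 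Here one must be a little careful about which $\mathrm{HOM}$ versus $\mathrm{Hom}$ is in play, but $S$ is finitely generated, so ${\rm END}_R(S)={\rm End}_R(S)$ by \cite[Corollary 2.4.4]{nvo}.

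Next I would compute ${\rm END}_R(_RR)$ in two ways. On one hand, $_RR$ is finitely generated, so ${\rm END}_R(_RR)={\rm End}_R(_RR)\cong R^{\mathrm{op}}$ as graded rings, with the grading matching because left multiplications by homogeneous elements are homogeneous morphisms. On the other hand, using $_RR\simeq \bigoplus_{k=1}^n S(h_k)$ and the additivity of ${\rm HOM}$, one gets
$${\rm END}_R\Big(\bigoplus_{k=1}^n S(h_k)\Big)_\sigma \;=\; \Big(\;{\rm HOM}_R\big(S(h_k),S(h_l)\big)_\sigma\;\Big)_{1\le k,l\le n},$$
an $n\times n$ matrix ring over the graded "$\mathrm{HOM}$-bimodule" of $S$ with itself. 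The key identification is that ${\rm HOM}_R(S(h_k),S(h_l))_\sigma \cong {\rm HOM}_R(S,S)_{h_k\sigma h_l^{-1}}\cong \Delta_{h_k\sigma h_l^{-1}}$: a degree-$\sigma$ map $S(h_k)\to S(h_l)$ is the same data as a map $S\to S$ shifting degrees by $h_l^{-1}\sigma h_k$ up to the usual bookkeeping — this is exactly where the conjugated shifts $g_i\sigma g_j^{-1}$ in the definition of $M_n(\Delta)(g_1,\ldots,g_n)$ come from. Assembling this, ${\rm END}_R(_RR)$ is graded-isomorphic to $M_n(\Delta)(g_1^{-1},\ldots,g_n^{-1})$ (or to $M_n(\Delta)(g_1,\ldots,g_n)$ after renaming $g_i := h_i^{-1}$), and therefore so is $R^{\mathrm{op}}$. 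Finally, one checks $M_n(\Delta)(\underline{g})^{\mathrm{op}}$ is again of the same form (via transpose composed with an anti-automorphism of $\Delta$, using that $\Delta^{\mathrm{op}}$ is also a graded division ring), yielding $R\cong M_n(\Delta')(g_1',\ldots,g_n')$.

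The main obstacle I expect is bookkeeping with the shifts and the direction of composition: getting the conjugation pattern $g_i\sigma g_j^{-1}$ right in ${\rm HOM}_R(S(h_k),S(h_l))_\sigma$, and tracking how the opposite-ring and anti-automorphism steps interact with the grading so that the final tuple $(g_1,\ldots,g_n)$ comes out correctly. One subtlety worth isolating as a lemma: the natural isomorphism ${\rm HOM}_R(M(\tau),N(\rho))\cong {\rm HOM}_R(M,N)$ and how it permutes the degree decomposition — once that is stated cleanly, the matrix computation above is essentially formal. The graded Schur lemma and the identification ${\rm End}_R(_RR)\cong R^{\mathrm{op}}$ are both routine. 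Alternatively, and perhaps more cleanly, one could avoid opposite rings entirely by working with the graded \emph{right} module $R_R\simeq\bigoplus \Gamma_i$, computing ${\rm END}_R(R_R)\cong R$ directly; the $\Gamma_i$ description already worked out in Section~\ref{sectiongradedsimple} makes the matrix identification transparent, and this is the route I would ultimately take.
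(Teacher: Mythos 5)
Your proposal is correct in outline but takes a genuinely different route from the paper. The paper gives a Rieffel-style biendomorphism argument: pick one minimal graded left ideal $V$, set $\Delta={\rm END}_R(V)$, form $E={\rm END}(V_\Delta)$, and show the canonical $\varphi:R\to E$, $\varphi(r)(a)=ra$, is injective (graded simplicity) and surjective. Surjectivity comes from verifying $E\varphi(V)\subset\varphi(V)$ and combining this with $VR=R$ to conclude that $\varphi(R)$ is a left ideal of $E$ containing $1$; finiteness of a homogeneous $\Delta$-basis of $V$ is proved separately by exhibiting the finite-rank operators as a proper graded ideal of $E$. You instead decompose the regular module ($_RR$, or better $R_R$) as a finite direct sum of shifts of one graded simple $S$, identify the graded endomorphism ring of the regular module with $R$ itself, and compute it as a matrix ring over $\Delta={\rm END}_R(S)$. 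This is closer to the textbook Wedderburn--Artin proof via the semisimple decomposition: the parameters $n$ and $g_1,\ldots,g_n$ are visible immediately, and the matrix identification is formal once the shift bookkeeping is done. Two details to pin down. First, your formula ${\rm HOM}_R(S(h_k),S(h_l))_\sigma\cong\Delta_{h_k\sigma h_l^{-1}}$ has the conjugation in the wrong direction: a degree-$\sigma$ map $S(h_k)\to S(h_l)$ sends $S_{gh_k}$ into $S_{g\sigma h_l}$, hence has degree $h_k^{-1}\sigma h_l$ as an endomorphism of $S$, so the correct identification is with $\Delta_{h_k^{-1}\sigma h_l}$; after setting $g_i=h_i^{-1}$ this becomes $\Delta_{g_k\sigma g_l^{-1}}$, as required for the $(k,l)$-entry of $M_n(\Delta)(g_1,\ldots,g_n)_\sigma$. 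Second, with the paper's convention that ${\rm END}_R(M)$ for a left module $M$ is taken with \emph{opposite} composition, one has ${\rm END}_R({}_RR)\cong R$ rather than $R^{\mathrm{op}}$, so the opposite-ring detour can in fact be avoided on the left side too, not only via your right-module alternative. Overall the paper's argument is shorter and sidesteps decomposing the regular module; yours is more self-contained and makes the matrix structure explicit, at the cost of extra shift and handedness bookkeeping.
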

\begin{proof}
Since $R$ is graded left Artinian, it contains a minimal graded
left ideal $V$. Then $\Delta={\rm END}_R(V)$ is a graded division
ring (the multiplication is the opposite map composition), and $V$
is a graded right $\Delta$-module. Then we can consider the graded
ring $E={\rm END}(V_{\Delta})$, and the map $\varphi:R\ra E$,
$\varphi (r)(a)=ra$ for any $r\in R$ and $a\in V$, is a morphism
of $G$-graded rings. Since $R$ is graded simple, $\varphi$ is
injective.

Now let $a\in V$, $\delta \in E$ and $v\in V_g$ for some $g\in G$.
Then the map $f_v:V\ra V, f_v(x)=xv$ lies in $\Delta_g$, and then

\bea (\delta \varphi(a))(v)&=&\delta(av)\\
&=&\delta (a\cdot f_v)\\
&=&\delta(a)\cdot f_v\\
&=&f_v(\delta(a))\\
&=&\delta(a)v\\
&=&\varphi(\delta(a))(v) \eea As this holds for any homogeneous
$v\in V$, we obtain that $\delta\varphi(a)=\varphi(\delta(a))$, so
$E\varphi(V)\subset \varphi(V)$. Now $VR$ is a non-zero graded ideal
of $R$, so $VR=R$, and then $\varphi(R)=\varphi(V)\varphi(R)$. This
shows that $E\varphi(R)=E\varphi(V)\varphi(R)\subset
\varphi(V)\varphi(R)=\varphi(R)$, so $\varphi(R)$ is a left ideal of
$E$ which contains the identity element of $E$. We conclude that
$\varphi(R)=E$, so $\varphi$ is an isomorphism.

Since $\Delta$ is a graded division ring, any graded right
$\Delta$-module is free and has a basis consisting of homogeneous
elements; moreover, any two homogeneous bases have the same
cardinality. We show that a homogeneous basis of $V$ as a graded
right $\Delta$-module is finite. Indeed, otherwise we can consider
for any $\sigma\in G$ the set $$I_\sigma=\{ \delta \in {\rm
END}(V_\Delta)_\sigma\; |\; {\rm Im}\delta \mbox{ has a finite
homogeneous basis as a graded right }\Delta-{\rm module}\},$$ and
one can easily check that $I=\oplus_{\sigma\in G}I_\sigma$ is a
non-zero proper graded ideal of $E$. This is a contradiction,
since $E\simeq R$ is graded simple. We conclude that $V$ has a
finite homogeneous basis, say with $n$ elements, so $E={\rm
END}(V_\Delta)={\rm End}(V_\Delta)\simeq M_n(\Delta)$, a ring
isomorphism. Moreover, by \cite[Proposition 2.10.5]{nvo} or
\cite[pages 30-31]{ek}, $E={\rm END}(V_\Delta)\simeq
M_n(\Delta)(g_1,\ldots,g_n)$, an isomorphism of graded rings,
where $g_1,\ldots,g_n$ are the degrees of the basis elements of
$V$.
\end{proof}

\section{Projective objects in the category of graded modules over a
graded artinian ring}  \label{sectionprojectiveobjects}

If $R$ is a graded left Artinian ring, then $R/J^{\rm gr}(R)$ is
graded semisimple, and the isomorphism types of graded simple left
(right) $R$-modules are in bijection with the the isomorphism
types of graded simple left (right) $R/J^{\rm gr}(R)$-modules.
Moreover, this bijection preserves the isoshift equivalence, i.e.,
if $S_1,S_2$ are graded simple $R$-modules and $\sigma\in G$, then
$S_1\simeq S_2(\sigma)$ as graded left $R$-modules if and only if
$S_1\simeq S_2(\sigma)$ as graded left $R/J^{\rm gr}(R)$-modules.
As a consequence, $R$ has the same number of isoshift types of
graded simple modules to the left and to the right.

The following result shows which isoshift types can be found inside
$R$.

\begin{lemma} \label{scufundaresimple}
Let $M$ be a maximal graded left ideal in the $G$-graded algebra
$R$. The following are equivalent.\\
{\rm (1)} There exists $g\in G$ such that $(R/M)(g)$ embeds into
$R$.\\
{\rm (2)} $M={\rm ann}_{\ell}(x)$ for a homogeneous element $x\in
R$.\\
{\rm (3)} ${\rm ann}_r(M)\neq 0$.\\
{\rm (4)} $M={\rm ann}_{\ell}({\rm ann}_r(M))$.
\end{lemma}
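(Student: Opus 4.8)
The plan is to prove the equivalences by establishing the cycle $(1)\Rightarrow(2)\Rightarrow(3)\Rightarrow(4)\Rightarrow(1)$, with $(2)$ as the natural pivot. For $(1)\Rightarrow(2)$: suppose $(R/M)(g)$ embeds into $R$, say via a graded monomorphism $\iota$. The module $(R/M)(g)$ is graded simple, hence concentrated-at-top in the sense that it is generated by any of its nonzero homogeneous elements; pick a homogeneous generator $\bar{1}+M$ of $R/M$, which sits in degree $g$ of the shift. Its image $x=\iota(\bar 1+M)$ is a nonzero homogeneous element of $R$, and since $\iota$ is an $R$-module map, ${\rm ann}_\ell(x)={\rm ann}_\ell(\bar 1+M)=M$ (the annihilator of a cyclic generator of $R/M$ is exactly $M$; shifting does not change annihilators). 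This gives $(2)$.

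For $(2)\Rightarrow(3)$: if $M={\rm ann}_\ell(x)$ with $x$ homogeneous and nonzero, then $x\in{\rm ann}_r(M)$ since $Mx=0$, so ${\rm ann}_r(M)\neq 0$. For $(3)\Rightarrow(4)$: this is the one genuine containment to check. Always $M\subseteq{\rm ann}_\ell({\rm ann}_r(M))$, so I must show the reverse inclusion under the hypothesis ${\rm ann}_r(M)\neq 0$. If ${\rm ann}_\ell({\rm ann}_r(M))$ were strictly larger than $M$, then by maximality of $M$ it would be all of $R$, i.e., $1\cdot{\rm ann}_r(M)=0$, forcing ${\rm ann}_r(M)=0$, a contradiction. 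So ${\rm ann}_\ell({\rm ann}_r(M))=M$, which is $(4)$.

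For $(4)\Rightarrow(1)$: assuming $M={\rm ann}_\ell({\rm ann}_r(M))$, in particular ${\rm ann}_r(M)\neq 0$ (else $M=R$, contradicting maximality). Pick any nonzero homogeneous $x\in{\rm ann}_r(M)$, say $\deg x = h$. Consider the right-multiplication map $\rho:R\to R$, $\rho(r)=rx$, which is a morphism of graded left $R$-modules of degree $h$, i.e., a morphism $R\to R(h)$ in $R\text{-}gr$. Its kernel is ${\rm ann}_\ell(x)$, a graded left ideal containing $M$ (since $x\in{\rm ann}_r(M)$ means $Mx=0$); by maximality ${\rm ann}_\ell(x)$ is either $M$ or $R$, and it is not $R$ since $x\neq 0$. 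Hence ${\rm ann}_\ell(x)=M$, and $\rho$ induces a graded monomorphism $(R/M)(h)\hookrightarrow R$ (reading $Rx\subseteq R$ back with the appropriate shift: $R/M \cong Rx$ in $R\text{-}gr$ up to the shift by $h$). Setting $g=h$ gives $(1)$.

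The main obstacle is bookkeeping of the shifts: one must be careful that ``$M={\rm ann}_\ell(x)$ for $x$ homogeneous of degree $h$'' corresponds to an embedding of $(R/M)(h)$ — or perhaps $(R/M)(h^{-1})$ depending on the shift convention $M(\sigma)_g=M_{g\sigma}$ adopted in the preliminaries — rather than of $R/M$ itself; the cyclic isomorphism $R/{\rm ann}_\ell(x)\cong Rx$ is an isomorphism of ungraded modules that becomes graded only after the correct shift, and the sign of that shift must be tracked through the convention $M(\sigma)_g = M_{g\sigma}$. Nothing here uses the Artinian hypothesis; it is a purely formal statement about a maximal graded left ideal, and the argument is essentially the graded translation of the standard fact relating embeddings of $R/M$ into $R$ with nonvanishing of right annihilators.
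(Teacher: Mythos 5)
Your proof is correct and follows essentially the same cycle $(1)\Rightarrow(2)\Rightarrow(3)\Rightarrow(4)\Rightarrow(1)$ as the paper, with each implication argued in the same way. The only discrepancy is the sign of the shift in $(4)\Rightarrow(1)$ (with the convention $M(\sigma)_g=M_{g\sigma}$, an $x$ of degree $h$ gives an embedding of $(R/M)(h^{-1})$, not $(R/M)(h)$), but you explicitly flag this as a bookkeeping concern, and since condition $(1)$ is existential in $g$ the argument goes through regardless.
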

\begin{proof}
{\rm (1)}$\Rightarrow$ {\rm (2)} Let $f:(R/M)(g)\ra R$ be an
injective morphism in $R-gr$. Then $x=f(\hat{1})$ is a homogeneous
element of degree $g^{-1}$ in $R$, and since $f(\hat{r})=rx$ for any
$r\in R$, we see that ${\rm ann}_{\ell}(x)=M$. Here $\hat{r}$
denotes the class of $r$ in $R/M$.\\
{\rm (2)}$\Rightarrow$ {\rm (3)} Since ${\rm ann}_{\ell}(x)=M\neq
R$, $x$ must be non-zero. Now ${\rm ann}_r(M)$ is non-zero since it
contains $x$.\\
{\rm (3)}$\Rightarrow$ {\rm (4)} The inclusion $M\subset {\rm
ann}_{\ell}({\rm ann}_r(M))$ holds for any subset $M$ of $R$. If
this inclusion is not an equality, then ${\rm ann}_{\ell}({\rm
ann}_r(M))=R$, showing that ${\rm ann}_r(M)=0$, a contradiction.
Thus $M={\rm ann}_{\ell}({\rm ann}_r(M))$.\\
{\rm (4)}$\Rightarrow$ {\rm (1)} Since $M\neq R$, we have ${\rm
ann}_r(M)\neq 0$, so there are $g\in G$ and $0\neq x\in{\rm
ann}_r(M)_{g^{-1}}$. Then $M= {\rm ann}_{\ell}({\rm
ann}_r(M))\subset {\rm ann}_{\ell}(x)\neq R$, so $M={\rm
ann}_{\ell}(x)$, and this implies that the map $f:(R/M)(g)\ra R$,
$f(\hat{r})=rx$ for any $r\in R$, is an injective morphism in
$R-gr$.
\end{proof}

The following gives graded versions of fundamental structure
results for Artinian rings, see \cite[Corollary 2.9.7]{nvo}. The
second part is the graded version of Hopkins-Levitzki Theorem (in
a slightly more general form).

\begin{theorem}\label{HopkinsLevitzki}
{\rm (1)} Let $R$ be a graded left Artinian ring. Then $J^{\rm
gr}(R)$ is nilpotent.\\
{\rm (2)} Let $R$ be a graded ring such that $J^{\rm gr}(R)$ is
nilpotent and $R/J^{\rm gr}(R)$ is graded semisimple. Then a
graded left $R$-module $M$ is graded Noetherian of and only if $M$
is graded Artinian. In particular, a graded left Artinian ring is
graded left Noetherian.
\end{theorem}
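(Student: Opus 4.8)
The plan is to treat the two parts separately; each follows the classical un-graded argument, with care to work only with homogeneous elements, and I would prove them in the order stated.

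\emph{Part (1).} Set $J=J^{\rm gr}(R)$ and consider the descending chain of graded ideals $J\supseteq J^2\supseteq J^3\supseteq\cdots$. Since $R$ is graded left Artinian, this chain stabilizes, say $J^m=J^{m+1}=\cdots=:I$, whence $I^2=J^{2m}=I$. Suppose, for a contradiction, that $I\neq 0$. The family of graded left ideals $L$ of $R$ with $IL\neq 0$ is non-empty, since $I\cdot I=I\neq 0$, so by the descending chain condition it has a minimal member $L_0$. Choose homogeneous $a\in I$ and homogeneous $x\in L_0$ with $ax\neq 0$ (possible because $I$ and $L_0$ are graded). Then $Ix$ is a graded left ideal with $0\neq Ix\subseteq L_0$ and $I\cdot(Ix)=I^2x=Ix\neq 0$, so minimality of $L_0$ forces $L_0=Ix$; in particular $x\in Ix$, that is, $x=a'x$ for some $a'\in I$. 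Comparing the homogeneous components of degree $\deg x$ on the two sides gives $x=a'_\varepsilon x$, where $a'_\varepsilon\in I\cap R_\varepsilon\subseteq J^{\rm gr}(R)\cap R_\varepsilon=J(R_\varepsilon)$; hence $1-a'_\varepsilon$ is invertible in $R_\varepsilon$, so $x=0$, contradicting $ax\neq 0$. Therefore $I=0$, i.e. $J^{\rm gr}(R)$ is nilpotent.

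\emph{Part (2).} Fix $n$ with $J^n=0$, where $J=J^{\rm gr}(R)$. For a graded left $R$-module $M$ consider the finite filtration by graded submodules $M\supseteq JM\supseteq\cdots\supseteq J^nM=0$, whose factors $J^iM/J^{i+1}M$ are annihilated by $J$ and hence are graded left modules over the graded semisimple ring $R/J$. The crux is the observation that for a graded module $N$ over a graded semisimple ring, the conditions ``$N$ is graded Noetherian'', ``$N$ is graded Artinian'' and ``$N$ is a finite direct sum of graded simple modules'' are equivalent: over such a ring every graded module is a direct sum of graded simples, and such a direct sum is graded Noetherian (resp. graded Artinian) exactly when the index set is finite, because an infinite direct sum of non-zero modules admits both a strictly ascending and a strictly descending chain of graded submodules, while a finite direct sum of graded simples has a finite composition series in $(R/J)-gr$. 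Now if $M$ is graded Noetherian, each $J^iM$ is graded Noetherian, so each factor $J^iM/J^{i+1}M$ is a graded Noetherian $R/J$-module and hence also graded Artinian; since the graded Artinian objects form an extension-closed class in the Grothendieck category $R-gr$, the filtration forces $M$ to be graded Artinian. Symmetrically, if $M$ is graded Artinian then each factor is graded Artinian, hence graded Noetherian, over $R/J$, and the corresponding closure property for graded Noetherian objects yields that $M$ is graded Noetherian. Applying this with $M={}_RR$, together with part (1) and the fact recalled in Section~\ref{sectionsingular} that $R/J^{\rm gr}(R)$ is graded semisimple whenever $R$ is graded left Artinian, shows that a graded left Artinian ring is graded left Noetherian.

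\emph{Where the work lies.} Neither part poses a serious obstacle. In (1) the point requiring attention is the passage to degree-$\varepsilon$ components, which makes the Nakayama-type cancellation by the unit $1-a'_\varepsilon$ legitimate. In (2) the essential step is the reduction to $R/J$-modules together with the equivalence above — and it is precisely here that graded semisimplicity enters, through the decomposition of every graded $R/J$-module into graded simples; the remaining ingredient, namely that graded Artinian and graded Noetherian modules are closed under extensions in $R-gr$, is routine.
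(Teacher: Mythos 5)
Your proof is correct. The paper does not actually prove Theorem~\ref{HopkinsLevitzki}; it simply cites \cite[Corollary 2.9.7]{nvo}, so there is no in-text argument to compare against. What you give is the faithful graded translation of the two classical arguments: for part (1), the Brauer-style minimal-ideal trick to show that the stable value $I$ of the chain $J\supseteq J^2\supseteq\cdots$ must vanish, with the genuinely graded step being the projection to the degree-$\varepsilon$ component so that $a'_\varepsilon\in J^{\rm gr}(R)\cap R_\varepsilon=J(R_\varepsilon)$ and $1-a'_\varepsilon$ is a unit; and for part (2), the usual filtration $M\supseteq JM\supseteq\cdots\supseteq J^nM=0$, whose factors are graded modules over the graded semisimple ring $R/J^{\rm gr}(R)$, where graded Noetherian, graded Artinian and finite length coincide, combined with extension-closure of these properties. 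Both parts check out; the one point worth flagging is the implicit use, at the start of part (1), of the fact that for graded ideals $I$ and $L_0$ with $IL_0\neq 0$ one can actually find \emph{homogeneous} $a\in I$, $x\in L_0$ with $ax\neq 0$ — this is true (if all homogeneous products vanished so would all products), but it is the kind of step that deserves an explicit line.
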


As a first consequence, we have the following.

\begin{proposition} \label{contineminimalideal}
Let $R$ be a graded right Artinian ring. Then any non-zero graded
ideal of $R$ contains a minimal graded left ideal.
\end{proposition}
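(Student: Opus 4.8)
The plan is to descend to the graded semisimple quotient $R/J^{\rm gr}(R)$ after stripping off the nilpotent radical. Write $J=J^{\rm gr}(R)$. Since $R$ is graded right Artinian, the right-hand analogue of Theorem~\ref{HopkinsLevitzki}(1) applies (apply the theorem to $R^{\rm op}$ with the reversed grading, noting that $J^{\rm gr}$ is left--right symmetric), so $J$ is nilpotent, say $J^n=0$. Likewise $R/J$ is graded right Artinian with zero graded Jacobson radical, hence graded semisimple by the (left--right symmetric) characterization of graded semisimple rings recalled in Section~\ref{sectionsingular}; consequently the category of graded left $R/J$-modules is semisimple, i.e. every graded left $R/J$-module is a direct sum of graded simple ones.

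Next I would localize the argument to the given ideal. Let $I$ be a nonzero graded ideal of $R$ and consider the descending chain of graded ideals
$$I\supseteq JI\supseteq J^2I\supseteq\cdots.$$
Since $J^nI=0$ while $J^0I=I\neq 0$, there is a largest integer $k\geq 0$ with $L:=J^kI\neq 0$. Then $L$ is a nonzero graded ideal of $R$: it is a left ideal because $J^k$ is an ideal, and a right ideal because $I$ is; it satisfies $L\subseteq I$ and $JL=J^{k+1}I=0$ by maximality of $k$.

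Finally, since $JL=0$, the graded left $R$-module $L$ is canonically a graded left $R/J$-module (with $\bar r\cdot x=rx$), and every graded $R$-submodule $S\subseteq L$ is an $R/J$-submodule (as $JS\subseteq JL=0$), and conversely. Because $L$ is a nonzero graded left $R/J$-module and the category of graded left $R/J$-modules is semisimple, $L$ contains a graded simple $R/J$-submodule $S$; by the previous remark $S$ is then a graded simple $R$-module, i.e. a minimal graded left ideal of $R$, and $S\subseteq L\subseteq I$. This is what was required. The argument is essentially routine once the two inputs ``$J$ nilpotent'' and ``$R/J$ graded semisimple'' are in hand; the only point deserving a word of care is that these are invoked under the \emph{right} Artinian hypothesis, which is handled by the left--right symmetry of $J^{\rm gr}$ and of graded semisimplicity, and that $L=J^kI$ must be checked to be two-sided so that the passage to $R/J$-modules is legitimate.
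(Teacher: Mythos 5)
Your proof is correct and follows essentially the same route as the paper's, resting on the nilpotency of $J^{\rm gr}(R)$ (right-hand Hopkins--Levitzki) and the graded semisimplicity of $R/J^{\rm gr}(R)$. The only minor difference is the choice of nonzero graded sub-ideal of $I$ annihilated by $J^{\rm gr}(R)$: you take $L=J^kI$ with $k$ maximal such that $J^kI\neq 0$, whereas the paper works with $I\cap{\rm ann}_r(J^{\rm gr}(R))$ (note $J^kI\subseteq I\cap{\rm ann}_r(J^{\rm gr}(R))$), shown nonzero via a product-chain contradiction.
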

\begin{proof}
Let $I$ be a non-zero graded ideal of $R$. We show that $I\cap {\rm
ann}_r(J^{\rm gr}(R))\neq 0$. Indeed, otherwise let $x\in I\setminus
\{0\}$. Then there exists $a_1\in J^{\rm gr}(R)$ with $a_1x\neq 0$.
As $a_1x \in I$, there exists $a_2\in J^{\rm gr}(R)$ such that
$a_2a_1x\neq 0$. We continue recurrently and find $a_1,a_2,\ldots
\in J^{\rm gr}(R)$ such that $a_m\ldots a_1x\neq 0$ for any positive
integer $m$. This is in contradiction to the fact that $J^{\rm
gr}(R)$ is nilpotent.

Thus $I\cap {\rm ann}_r(J^{\rm gr}(R))$ is a non-zero graded ideal
of $R$, and $J^{\rm gr}(R)(I\cap {\rm ann}_r(J^{\rm gr}(R)))=0$, so
then $I\cap {\rm ann}_r(J^{\rm gr}(R))$ is a non-zero graded left
$R/J^{\rm gr}(R)$-module, thus a graded semisimple one, since
$R/J^{\rm gr}(R)$ is graded semisimple. We conclude that $I\cap {\rm
ann}_r(J^{\rm gr}(R))$ contains a graded simple left $R/J^{\rm
gr}(R)$-module, thus also a graded simple graded $R$-submodule. This
is obviously a minimal graded left ideal contained in $I$.
\end{proof}

In the rest of this section we follow the approach in Sections 6.2
and 6.3 in \cite{pierce}, adapted to the graded case. Let $R$ be a
$G$-graded ring which is graded left Artinian.

If $P$ is a graded left $R$-module, then $P/J^{\rm gr}(R)P$ is a
graded left $R/J^{\rm gr}(R)$-module. Let $\pi_P:P\ra P/J^{\rm
gr}(R)P$ be the natural projection. If $Q$ is another graded left
$R$-module, then for any morphism $u:P\ra Q$ in $R-gr$, there exists
a unique morphism $\overline{u}:P/J^{\rm gr}(R)P\ra Q/J^{\rm
gr}(R)Q$ in $R-gr$, and also in $R/J^{\rm gr}(R)-gr$, such that
$\pi_Q u=\overline{u}\pi_P$. This defines a linear map
$$\theta_{P,Q}:{\rm Hom}_{R-gr}(P,Q)\ra {\rm Hom}_{R/J^{\rm gr}(R)-gr}(P/J^{\rm
gr}(R)P,Q/J^{\rm gr}(R)Q),\; \theta_{P,Q}(u)=\overline{u},$$ which
is surjective in the case where $P$ is a projective object in
$R-gr$, and a ring morphism in the case where $Q=P$.

\begin{proposition}
Let $R$ be a graded left Artinian ring, and let $P$ be a projective
object in the category $R-gr$. Then ${\ {\rm Ker}}\,
\theta_{P,P}=J({\rm End}_{R-gr}(P))$, thus $\theta_{P,P}$ induces a
ring isomorphism ${\rm End}_{R-gr}(P)/J({\rm End}_{R-gr}(P))\simeq
{\rm End}_{R/J^{\rm gr}(R)}(P/J^{\rm gr}(R)P)$.
\end{proposition}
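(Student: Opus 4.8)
Write $S={\rm End}_{R-gr}(P)$, $\bar R=R/J^{\rm gr}(R)$, $M=P/J^{\rm gr}(R)P$, $\bar S={\rm End}_{\bar R-gr}(M)$, and abbreviate $\theta=\theta_{P,P}\colon S\to\bar S$. Since $\theta$ is already known to be a surjective ring morphism (because $P$ is projective in $R-gr$), the whole content is the equality ${\rm Ker}\,\theta=J(S)$, and that is what I would aim for. Since $J(-)$ and ${\rm Ker}\,\theta$ are unchanged when $S,\bar S$ are replaced by their opposite rings, I would take the multiplication on these endomorphism rings to be composition of maps. First I would unwind $\theta$: $\theta(u)=0$ iff $\pi_Pu=\theta(u)\pi_P=0$ iff ${\rm Im}\,u\subseteq{\rm Ker}\,\pi_P=J^{\rm gr}(R)P$, so $\Delta:={\rm Ker}\,\theta=\{u\in S\mid{\rm Im}\,u\subseteq J^{\rm gr}(R)P\}$; and since $g(J^{\rm gr}(R)P)=J^{\rm gr}(R)g(P)\subseteq J^{\rm gr}(R)P$ for every $g\in S$, $\Delta$ is a two-sided ideal of $S$. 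Two facts I want in hand: $\bar R$ is graded semisimple, and $J^{\rm gr}(R)$ is nilpotent (Theorem~\ref{HopkinsLevitzki}(1)), whence $J^{\rm gr}(R)P$ is superfluous in $P$, i.e.\ $N+J^{\rm gr}(R)P=P$ forces $N=P$ for every graded submodule $N$ (multiply repeatedly by $J^{\rm gr}(R)$ and use nilpotence).

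For $\Delta\subseteq J(S)$ I would show that $1-b$ is a unit of $S$ for every $b\in\Delta$; since $sa\in\Delta$ for all $a\in\Delta$, $s\in S$, this gives $1-sa\in S^\times$ for all $s$, i.e.\ $a\in J(S)$. Given $b\in\Delta$, from $p=(1-b)(p)+b(p)$ with $b(p)\in J^{\rm gr}(R)P$ one gets $(1-b)(P)+J^{\rm gr}(R)P=P$, so $1-b$ is surjective by superfluousness. As $P$ is projective in $R-gr$, the epimorphism $1-b\colon P\to P$ splits, giving $h\in S$ with $(1-b)h=1$. Then $h=1+bh$ with $bh\in\Delta$, so $h=1-c$ for some $c\in\Delta$, and the same argument shows $h$ is surjective, hence right-invertible; having one-sided inverses on both sides, $h$ is a unit, and therefore $1-b=h^{-1}\in S^\times$.

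For $J(S)\subseteq\Delta$ I would argue contrapositively: take $f\in S\setminus\Delta$, so $\bar f:=\theta(f)\neq0$, and produce $g\in S$ with $1-gf$ a non-unit, forcing $f\notin J(S)$. Since $\bar R$ is graded semisimple, $M$ is a direct sum of graded simple $\bar R$-submodules, and $\bar f$ is nonzero on one of them, say on a graded simple submodule $L\subseteq M$. Then $\bar f|_L$ is a nonzero morphism in $\bar R-gr$ out of a graded simple module, hence an isomorphism onto a graded simple submodule $L'=\bar f(L)$, which is a direct summand $M=L'\oplus C$. With $q\colon M\to L'$ the associated projection and $\iota_L\colon L\hookrightarrow M$ the inclusion, set $\bar g=\iota_L\circ(\bar f|_L)^{-1}\circ q\in\bar S$; then $\bar g\bar f$ is the identity on the submodule $L$, so $(1-\bar g\bar f)(L)=0$ and $1-\bar g\bar f$ is not injective, in particular not a unit of $\bar S$. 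Lifting $\bar g$ to $g\in S$ via the surjectivity of $\theta$, we get $\theta(1-gf)=1-\bar g\bar f$, a non-unit, so $1-gf$ is a non-unit of $S$ (ring morphisms preserve units) and $f\notin J(S)$. The two inclusions give ${\rm Ker}\,\theta=J(S)$, and since $\theta$ is onto it induces the ring isomorphism ${\rm End}_{R-gr}(P)/J({\rm End}_{R-gr}(P))\simeq\bar S$.

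Most of this is routine: that $\Delta$ is an ideal, the graded Nakayama step, and the trick of upgrading a split epimorphism to a two-sided inverse are all short. The step I expect to need the most care is $J(S)\subseteq\Delta$, which is the only place the hypotheses are really used — the projectivity of $P$ (to know $\theta$ is onto, hence to lift $\bar g$) and the graded semisimplicity of $M$ (to invert $\bar f$ on a simple summand and assemble $\bar g$); without projectivity $\theta$ need not be surjective and the lifting fails, which is exactly where the argument would break.
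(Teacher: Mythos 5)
Your proof is correct, but it takes a genuinely different and more hands-on route than the paper's. The paper argues: any $u\in\ker\theta_{P,P}$ has $u(P)\subseteq J^{\rm gr}(R)P$, so by nilpotence of $J^{\rm gr}(R)$ the endomorphism $u$ is itself nilpotent; hence $\ker\theta_{P,P}$ is a nil ideal and sits inside $J(S)$. Then, since $\ker\theta_{P,P}\subseteq J(S)$, the correspondence $J(S/\ker\theta_{P,P})=J(S)/\ker\theta_{P,P}$ applies, and the identification $S/\ker\theta_{P,P}\simeq {\rm End}_{\bar R\text{-}gr}(M)$ together with the fact that the endomorphism ring of a (graded) semisimple module has zero Jacobson radical forces $J(S)=\ker\theta_{P,P}$. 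You never invoke nilpotence of the endomorphisms nor the abstract fact about endomorphism rings of semisimple modules; instead you prove $\ker\theta\subseteq J(S)$ directly by showing $1-b$ is a unit for $b\in\ker\theta$, via a Nakayama/superfluousness argument (using nilpotence of $J^{\rm gr}(R)$) plus the splitting of epimorphisms onto the projective $P$, and you prove $J(S)\subseteq\ker\theta$ by building, from $\bar f\neq 0$, an explicit $g$ making $1-gf$ a non-unit — which is in effect an inline re-derivation of $J(\bar S)=0$ for the graded semisimple $M$. Your version is longer but more self-contained, resting only on the elementary quasi-regularity characterization of the Jacobson radical; the paper's is shorter because it imports two standard structural facts. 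Both use the hypotheses (projectivity of $P$; $R$ graded left Artinian, hence $J^{\rm gr}(R)$ nilpotent and $R/J^{\rm gr}(R)$ graded semisimple) in exactly the corresponding places.
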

\begin{proof}
Let $u\in {\ {\rm Ker}}\, \theta_{P,P}$. Then $\overline{u}=0$, so
$u(P)\subset J^{\rm gr}(R)P$. Then $u$ is nilpotent since so is
$J^{\rm gr}(R)$. This shows that ${\ {\rm Ker}}\,
\theta_{P,P}\subset J({\rm End}_{R-gr}(P))$. Then $J({\rm
End}_{R-gr}(P)/{\ {\rm Ker}}\, \theta_{P,P})=J({\rm
End}_{R-gr}(P))/{\ {\rm Ker}}\, \theta_{P,P}$.

Now ${\rm End}_{R-gr}(P)/{\ {\rm Ker}}\, \theta_{P,P}\simeq {\rm
End}_{R/J^{\rm gr}(R)}(P/J^{\rm gr}(R)P)$, and since $P/J^{\rm
gr}(R)P$ is a semisimple graded left $R/J^{\rm gr}(R)$-module, we
have that $J({\rm End}_{R/J^{\rm gr}(R)}(P/J^{\rm gr}(R)P))=0$. We
conclude that $J({\rm End}_{R-gr}(P))/{\ {\rm Ker}}\,
\theta_{P,P}=0$, and then ${\ {\rm Ker}}\, \theta_{P,P}=J({\rm
End}_{R-gr}(P))$.
\end{proof}

\begin{corollary} \label{corisoproj}
Let $P$ and $Q$ be graded projective left modules over the graded
left Artinian ring $R$. Then $P\simeq Q$ in $R-gr$ if and only if
$P/J^{\rm gr}(R)P\simeq Q/J^{\rm gr}(R)Q$ in $R/J^{\rm gr}(R)-gr$.
\end{corollary}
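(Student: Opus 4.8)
The plan is to use the previous Proposition (giving the ring isomorphism $\mathrm{End}_{R\text{-}gr}(P)/J(\mathrm{End}_{R\text{-}gr}(P))\simeq \mathrm{End}_{R/J^{\rm gr}(R)}(P/J^{\rm gr}(R)P)$) together with a standard projective-cover / idempotent-lifting argument adapted to the graded setting. The forward implication is trivial: if $P\simeq Q$ in $R\text{-}gr$ via $\alpha$, then $\bar\alpha=\theta_{P,Q}(\alpha)$ is an isomorphism (its inverse is $\theta_{Q,P}(\alpha^{-1})$, using that $\theta$ is compatible with composition), so $P/J^{\rm gr}(R)P\simeq Q/J^{\rm gr}(R)Q$.

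For the converse, suppose $\bar f\colon P/J^{\rm gr}(R)P\to Q/J^{\rm gr}(R)Q$ is an isomorphism in $R/J^{\rm gr}(R)\text{-}gr$. Since $P$ is projective in $R\text{-}gr$ and $\theta_{P,Q}$ is surjective for projective $P$, lift $\bar f$ to a morphism $f\colon P\to Q$ in $R\text{-}gr$ with $\theta_{P,Q}(f)=\bar f$; similarly lift $\bar f^{-1}$ to $h\colon Q\to P$. Then $\theta_{P,P}(hf)=\theta(h)\circ\theta(f)=\bar f^{-1}\bar f=\mathrm{id}$, so $hf-\mathrm{id}_P\in{\rm Ker}\,\theta_{P,P}=J(\mathrm{End}_{R\text{-}gr}(P))$; hence $hf$ is invertible in $\mathrm{End}_{R\text{-}gr}(P)$. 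Likewise $fh$ is invertible in $\mathrm{End}_{R\text{-}gr}(Q)$ (for this one needs the analogue of the previous Proposition with $P$ replaced by $Q$, which holds since $Q$ is also projective, or one argues directly: $fh-\mathrm{id}_Q$ maps $Q$ into $J^{\rm gr}(R)Q$ hence is nilpotent). From $hf$ invertible we get that $f$ is a split monomorphism and $h$ a split epimorphism; from $fh$ invertible, $f$ is a split epimorphism. A morphism in an abelian (here Grothendieck) category that is both a monomorphism and an epimorphism is an isomorphism, so $f\colon P\to Q$ is an isomorphism in $R\text{-}gr$.

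The main obstacle, and the only point requiring a little care, is justifying that $fh$ is invertible in $\mathrm{End}_{R\text{-}gr}(Q)$: one must either invoke the previous Proposition for $Q$ (legitimate, since $Q$ is projective) or observe directly that $\theta_{Q,Q}(fh)=\mathrm{id}$ forces $fh-\mathrm{id}_Q$ to land in $J^{\rm gr}(R)Q$, whence it is nilpotent because $J^{\rm gr}(R)$ is nilpotent by Theorem \ref{HopkinsLevitzki}, making $fh=\mathrm{id}_Q+(\text{nilpotent})$ a unit. A cleaner alternative that avoids treating $f$ and $h$ separately: show $f$ is an isomorphism by proving $\mathrm{Ker}\,f=0$ and $\mathrm{Coker}\,f=0$. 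Since $hf$ is a unit, $f$ is injective; and $fh$ being a unit gives that $f$ is surjective. Either route is routine once the previous Proposition is in hand; no further graded-specific subtlety arises.
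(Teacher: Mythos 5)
Your proof is correct and follows essentially the same route as the paper: lift the isomorphism and its inverse through the surjective maps $\theta_{P,Q}$ and $\theta_{Q,P}$, observe that the lifted composites differ from the identity by elements of $\mathrm{Ker}\,\theta=J(\mathrm{End}_{R\text{-}gr}(-))$, hence are units, and conclude that the lift is a two-sided invertible graded morphism. The small variations you mention (direct nilpotency of $fh-\mathrm{id}_Q$, or arguing via mono and epi) are minor presentational alternatives, not a different method.
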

\begin{proof}
If $u\in {\rm Hom}_{R-gr}(P,Q)$ is an isomorphism, then clearly
$$\overline{u}=\theta_{P,Q}(u)\in {\rm Hom}_{R/J^{\rm gr}(R)-gr}(P/J^{\rm
gr}(R)P,Q/J^{\rm gr}(R)Q)$$ is an isomorphism.

Conversely, let $\varphi \in {\rm Hom}_{R/J^{\rm
gr}(R)-gr}(P/J^{\rm gr}(R)P,Q/J^{\rm gr}(R)Q)$ be an isomorphism,
with inverse $\psi$. Since $\theta_{P,Q}$ and $\theta_{Q,P}$ are
surjective, there are $u\in {\rm Hom}_{R-gr}(P,Q)$ and $v\in {\rm
Hom}_{R-gr}(Q,P)$ such that $\varphi=\overline{u}$ and
$\psi=\overline{v}$. Since $\psi\varphi=Id$, we get
$\overline{vu}=\overline{v}\overline{u}=\psi\varphi=Id=\overline{1_P}$
(note that we used the same overline symbol in several Hom-spaces,
but there is no danger of confusion), so $1_P-vu\in {\ {\rm
Ker}}\, \theta_{P,P}=J({\rm End}_{R-gr}(P))$. Then
$vu=1_P-(1_P-vu)$ is invertible in ${\rm End}_{R-gr}(P)$, so then
$u$ has a left inverse as a graded morphism. Similarly, by
$\varphi\psi=Id$, we get that $uv$ is invertible in ${\rm
End}_{R-gr}(Q)$, and then $u$ has a right inverse as a graded
morphism. We conclude that $u$ is an isomorphism.
\end{proof}

If $R$ is a graded left Artinian ring, then we have a
decomposition $R=P_1\oplus \ldots \oplus P_n$, where
$P_1,\ldots,P_n$ are graded indecomposable left modules. By the
graded version of the Krull-Schmidt Theorem, this decomposition is
unique (up to isomorphism and permutation of the factors). The
factors $P_1,\ldots,P_n$ are called the graded principal
indecomposable left $R$-modules. We are interested not only in
their isomorphism types, but also in their isoshift types.

\begin{proposition}  \label{isotypessimples}
Let $R$ be a graded left artinian ring. Then the mapping $P\mapsto
P/J^{\rm gr}(R)P$ defines a bijective correspondence between the
isomorphism types of principal graded indecomposable left
$R$-modules and the isomorphism types of graded simple left
$R/J^{\rm gr}(R)$-submodules that embed into $R/J^{\rm gr}(R)$.
Moreover, the same mapping induces a bijective correspondence
between the isoshift types of principal graded indecomposable left
$R$-modules and the isoshift types of graded simple left $R/J^{\rm
gr}(R)$-modules, and the latter are just the isoshift types of
graded simple left $R$-modules.
\end{proposition}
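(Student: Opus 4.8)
Write $\overline{R}=R/J^{\rm gr}(R)$; since $J^{\rm gr}(R)$ is nilpotent by Theorem \ref{HopkinsLevitzki}, the ring $\overline{R}$ is graded semisimple. Fix the decomposition $R=P_1\oplus\cdots\oplus P_n$ into principal indecomposables and set $\overline{P_i}=P_i/J^{\rm gr}(R)P_i$, so that $\overline{R}=\overline{P_1}\oplus\cdots\oplus\overline{P_n}$ in $\overline{R}-gr$. The first task is to prove that each $\overline{P_i}$ is graded simple. Each $P_i$ is a projective object of $R-gr$, being a direct summand of $R$, so the earlier Proposition on ${\rm Ker}\,\theta_{P,P}$ applies to $P_i$: the ideal ${\rm Ker}\,\theta_{P_i,P_i}$ of ${\rm End}_{R-gr}(P_i)$ is nil (each of its elements is nilpotent, by the proof of that Proposition), and the quotient is ${\rm End}_{\overline{R}-gr}(\overline{P_i})$. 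Now $\overline{P_i}$, as a direct summand of the graded semisimple ring $\overline{R}$, is graded semisimple; if it were not graded simple it would split as a direct sum of two nonzero graded submodules, producing an idempotent $\neq 0,1$ in ${\rm End}_{\overline{R}-gr}(\overline{P_i})$, which would lift through the nil ideal ${\rm Ker}\,\theta_{P_i,P_i}$ to a nontrivial degree-$\varepsilon$ idempotent of ${\rm End}_{R-gr}(P_i)$, contradicting the indecomposability of $P_i$. Hence every $\overline{P_i}$ is a graded simple submodule of $\overline{R}$.

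For the first correspondence, the assignment $[P]\mapsto[P/J^{\rm gr}(R)P]$ is then well defined from isomorphism types of principal indecomposables to isomorphism types of graded simple submodules of $\overline{R}$. It is surjective: a graded simple submodule of $\overline{R}=\overline{P_1}\oplus\cdots\oplus\overline{P_n}$ is a direct summand, hence isomorphic to some $\overline{P_j}$. It is injective by Corollary \ref{corisoproj}: from $\overline{P_i}\simeq\overline{P_j}$ in $\overline{R}-gr$ one gets $P_i\simeq P_j$ in $R-gr$, since $P_i$ and $P_j$ are graded projective over the graded left Artinian ring $R$.

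For the isoshift correspondence, the key remark is that for every $\sigma\in G$ one has $P_i(\sigma)/J^{\rm gr}(R)P_i(\sigma)=\overline{P_i}(\sigma)$ in $\overline{R}-gr$ — the submodule $J^{\rm gr}(R)P_i(\sigma)$ of $P_i(\sigma)=P_i$ is the same as $J^{\rm gr}(R)P_i$, the module action being unaffected by a shift — and that $P_i(\sigma)$ is again graded projective, the shift functors being isomorphisms of $R-gr$. Hence $P_i\simeq P_j(\sigma)$ in $R-gr$ implies $\overline{P_i}\simeq\overline{P_j}(\sigma)$, and conversely $\overline{P_i}\simeq\overline{P_j}(\sigma)$ in $\overline{R}-gr$ implies, by Corollary \ref{corisoproj} applied to the graded projectives $P_i$ and $P_j(\sigma)$, that $P_i\simeq P_j(\sigma)$. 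Thus $P\mapsto P/J^{\rm gr}(R)P$ induces a well-defined injective map on isoshift types, and it is onto the isoshift types of graded simple left $\overline{R}$-modules: over the graded semisimple ring $\overline{R}$ every graded simple module has a shift that embeds into $\overline{R}$, hence is isomorphic to some $\overline{P_j}$. Finally, the isoshift-preserving bijection recorded before Lemma \ref{scufundaresimple} identifies the isoshift types of graded simple left $\overline{R}$-modules with those of graded simple left $R$-modules, which completes the proof.

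The one substantive point is the graded simplicity of the $\overline{P_i}$; after that the argument is pure bookkeeping on top of Corollary \ref{corisoproj} and the semisimplicity of $\overline{R}$. I expect the delicate step to be the idempotent lifting: one should double-check that ${\rm Ker}\,\theta_{P_i,P_i}$ is genuinely a nil ideal and that the standard lifting of a degree-$\varepsilon$ idempotent from ${\rm End}_{\overline{R}-gr}(\overline{P_i})$ to ${\rm End}_{R-gr}(P_i)$ goes through unchanged in the graded category.
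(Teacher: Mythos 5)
Your proposal is correct, and its overall skeleton matches the paper's: show each $\overline{P_i}$ is graded simple, obtain surjectivity from the decomposition $\overline{R}=\bigoplus\overline{P_i}$, obtain injectivity from Corollary \ref{corisoproj}, and derive the isoshift correspondence from Corollary \ref{corisoproj} applied to $P$ and $Q(\sigma)$. The one place where you deviate is the proof that $\overline{P_i}$ is graded simple, and it is indeed the step that carries the weight. The paper argues that $P_i$ has finite length in $R$-gr (being a direct summand of the graded left Noetherian and Artinian module $R$), so $\operatorname{End}_{R\text{-gr}}(P_i)$ is a local ring; hence $\operatorname{End}_{\overline{R}\text{-gr}}(\overline{P_i})\simeq\operatorname{End}_{R\text{-gr}}(P_i)/J(\operatorname{End}_{R\text{-gr}}(P_i))$ is a division ring, and a graded semisimple module with division endomorphism ring must be graded simple. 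You instead use idempotent lifting: since $\operatorname{Ker}\theta_{P_i,P_i}$ is a nil ideal (every $u$ in it satisfies $u(P_i)\subset J^{\rm gr}(R)P_i$ and $J^{\rm gr}(R)$ is nilpotent), a nontrivial idempotent splitting $\overline{P_i}$ would lift to a nontrivial idempotent in $\operatorname{End}_{R\text{-gr}}(P_i)$, contradicting indecomposability. Both are standard and correct; your route is arguably slightly more elementary, as it does not invoke Fitting-type results to get a local endomorphism ring, only nilpotency of $J^{\rm gr}(R)$ plus idempotent lifting through a nil ideal. Note though that the local-ring fact is not wasted effort in the paper: it is the content of the proposition preceding Corollary \ref{corisoproj}, which identifies $\operatorname{Ker}\theta_{P,P}$ with $J(\operatorname{End}_{R\text{-gr}}(P))$, and the paper simply reuses it here. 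The rest of your argument (the reduction of surjectivity to the decomposition, the compatibility of shifts with the quotient, the appeal to the isoshift-preserving bijection between graded simples over $R$ and over $\overline{R}$) agrees with the paper.
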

\begin{proof}
Let $P$ be principal graded indecomposable left $R$-module.  Since
$R$ is graded left artinian, it is also graded left noetherian, thus
a graded $R$-module of finite length, and then so is $P$. Since $P$
is indecomposable in $R-gr$, ${\rm End}_{R-gr}(P)$ is a local ring,
thus ${\rm End}_{R/J^{\rm gr}(R)}(P/J^{\rm gr}(R)P)\simeq {\rm
End}_{R-gr}(P)/J({\rm End}_{R-gr}(P))$ is a division ring. Then
$P/J^{\rm gr}(R)P$ is a graded semisimple $R/J^{\rm gr}(R)$-module
with a division ring endomorphism ring, so it must be a graded
simple module.

Now if $R=P_1\oplus \ldots \oplus P_n$ is a decomposition with
$P_1,\ldots ,P_n$ graded indecomposable modules, we see that
$R/J^{\rm gr}(R)\simeq P_1/J^{\rm gr}(R)P_1\oplus \ldots \oplus
P_n/J^{\rm gr}(R)P_n$ is a sum of graded simple $R/J^{\rm
gr}(R)$-modules. Then any isomorphism type of a graded simple
submodule of $R/J^{\rm gr}(R)$ is isomorphic to some $P_i/J^{\rm
gr}(R)P_i$. Moreover, the correspondence $P\mapsto P/J^{\rm gr}(R)P$
is injective (as isomorphism types) by Corollary \ref{corisoproj},
and we have proved the first  bijective correspondence.

The second bijective correspondence follows immediately if we use
Corollary \ref{corisoproj} for $P$ and $Q(\sigma)$, where $P$ and
$Q$ are principal graded indecomposable left $R$-modules and
$\sigma \in G$, and the fact that any graded simple left $R/J^{\rm
gr}(R)$-module is isomorphic to a shift of a graded simple left
submodule of $R/J^{\rm gr}(R)$.
\end{proof}

\begin{theorem} \label{teoremastructuraproiective}
Let $R$ be a graded left artinian ring. Then any graded projective
left $R$-module is isomorphic to a direct sum of shifts of
principal graded indecomposable left $R$-modules, and this
representation is unique up to permutation and isomorphism of the
terms.
\end{theorem}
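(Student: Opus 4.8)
The plan is to transfer the decomposition problem to the graded semisimple quotient $R/J^{\rm gr}(R)$ by means of Corollary \ref{corisoproj}, exploiting that the functor $M\mapsto M/J^{\rm gr}(R)M$ commutes both with shifts and with arbitrary direct sums, and that over the graded semisimple ring $R/J^{\rm gr}(R)$ every graded module is a direct sum of graded simple modules. Throughout, fix the decomposition $R=P_1\oplus\ldots\oplus P_n$ into graded principal indecomposable left modules, as introduced before Proposition \ref{isotypessimples}.

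\emph{Existence.} Let $P$ be a graded projective left $R$-module and set $\overline{P}=P/J^{\rm gr}(R)P$, a graded left $R/J^{\rm gr}(R)$-module. Since $R$ is graded left Artinian, $R/J^{\rm gr}(R)$ is graded semisimple, so $\overline{P}\simeq\bigoplus_{\lambda\in\Lambda}S_\lambda$ with each $S_\lambda$ graded simple. By the remarks in the proof of Proposition \ref{isotypessimples}, every graded simple $R/J^{\rm gr}(R)$-module is a shift of a graded simple submodule of $R/J^{\rm gr}(R)$, and since $R/J^{\rm gr}(R)\simeq\bigoplus_{i=1}^{n}P_i/J^{\rm gr}(R)P_i$ such a submodule is isomorphic to some $P_i/J^{\rm gr}(R)P_i$; hence for each $\lambda$ there are $i_\lambda\in\{1,\ldots,n\}$ and $\sigma_\lambda\in G$ with $S_\lambda\simeq\bigl(P_{i_\lambda}/J^{\rm gr}(R)P_{i_\lambda}\bigr)(\sigma_\lambda)$. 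Put $Q=\bigoplus_{\lambda\in\Lambda}P_{i_\lambda}(\sigma_\lambda)$. Each $P_i$ is a direct summand of ${}_RR$, so $P_i(\sigma)$ is a direct summand of the projective generator $R(\sigma)$; as $R-gr$ is a Grothendieck category, the direct sum $Q$ of these graded projectives is again graded projective. Using $J^{\rm gr}(R)\bigl(M(\sigma)\bigr)=\bigl(J^{\rm gr}(R)M\bigr)(\sigma)$ and the fact that $M\mapsto M/J^{\rm gr}(R)M$ commutes with direct sums, we get $Q/J^{\rm gr}(R)Q\simeq\bigoplus_{\lambda}\bigl(P_{i_\lambda}/J^{\rm gr}(R)P_{i_\lambda}\bigr)(\sigma_\lambda)\simeq\overline{P}$, so Corollary \ref{corisoproj} gives $P\simeq Q$, the desired decomposition. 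One should note that Corollary \ref{corisoproj}, and the identity ${\rm Ker}\,\theta_{P,P}=J({\rm End}_{R-gr}(P))$ underlying it, remain valid for not necessarily finitely generated graded projective $P$: if $u(P)\subseteq J^{\rm gr}(R)P$ then $u^k(P)\subseteq J^{\rm gr}(R)^kP=0$ for $k$ large, since $J^{\rm gr}(R)$ is nilpotent by Theorem \ref{HopkinsLevitzki}.

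\emph{Uniqueness.} Suppose $\bigoplus_{\lambda\in\Lambda}P_{i_\lambda}(\sigma_\lambda)\simeq\bigoplus_{\mu\in M}P_{j_\mu}(\tau_\mu)$. Applying $M\mapsto M/J^{\rm gr}(R)M$ produces an isomorphism of graded semisimple $R/J^{\rm gr}(R)$-modules $\bigoplus_{\lambda}\bigl(P_{i_\lambda}/J^{\rm gr}(R)P_{i_\lambda}\bigr)(\sigma_\lambda)\simeq\bigoplus_{\mu}\bigl(P_{j_\mu}/J^{\rm gr}(R)P_{j_\mu}\bigr)(\tau_\mu)$, that is, two decompositions of one semisimple graded module into graded simple submodules. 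By the uniqueness of such decompositions (the isotypic components are intrinsic subobjects, and each isomorphism type of simple occurs with a well-defined multiplicity) there is a bijection $\phi:\Lambda\to M$ with $\bigl(P_{i_\lambda}/J^{\rm gr}(R)P_{i_\lambda}\bigr)(\sigma_\lambda)\simeq\bigl(P_{j_{\phi(\lambda)}}/J^{\rm gr}(R)P_{j_{\phi(\lambda)}}\bigr)(\tau_{\phi(\lambda)})$ for all $\lambda$. Applying Corollary \ref{corisoproj} to the graded projective modules $P_{i_\lambda}(\sigma_\lambda)$ and $P_{j_{\phi(\lambda)}}(\tau_{\phi(\lambda)})$ yields $P_{i_\lambda}(\sigma_\lambda)\simeq P_{j_{\phi(\lambda)}}(\tau_{\phi(\lambda)})$, so the two decompositions agree up to permutation and isomorphism.

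The only points requiring care are the observation that the $J^{\rm gr}(R)$-machinery of this section survives for infinitely generated projectives, and the bookkeeping of possibly infinite index sets in the semisimple decomposition; neither is a genuine obstacle. An alternative, more categorical route would be to realize $P$ directly as a direct summand of some $\bigoplus_\alpha R(\sigma_\alpha)=\bigoplus_\alpha\bigoplus_{i=1}^n P_i(\sigma_\alpha)$ and invoke the Krull--Remak--Schmidt--Azumaya theorem for families of objects with local endomorphism rings, each $P_i(\sigma)$ having local endomorphism ring in $R-gr$ because $P_i$ is indecomposable of finite length; but the route above is more self-contained given what is already established.
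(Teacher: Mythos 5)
Your proof is correct and takes essentially the same route as the paper: transfer to $R/J^{\rm gr}(R)$ via $M\mapsto M/J^{\rm gr}(R)M$, decompose into shifts of graded simples, lift back with Corollary~\ref{corisoproj}, and argue uniqueness the same way. The extra remark about the possibly infinitely generated case is reasonable caution but already covered, since the paper states the proposition on $\mathrm{Ker}\,\theta_{P,P}$ and Corollary~\ref{corisoproj} for arbitrary projective objects of $R$-gr.
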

\begin{proof}
Let $U$ be a projective object in the category $R-gr$. Since
$R/J^{\rm gr}(R)$ is graded semisimple, the graded left $R/J^{\rm
gr}(R)$-module $U/J^{\rm gr}(R)U$ is a direct sum of graded simple
$R/J^{\rm gr}(R)$-modules, and we have seen in Proposition
\ref{isotypessimples} that any such simple is isomorphic to
$(P/J^{\rm gr}(R)P)(\sigma)$ for some principal graded
indecomposable $R$-module $P$ and some $\sigma \in G$. Thus
$$U/J^{\rm gr}(R)U\simeq \oplus_{i\in I}(P_i/J^{\rm
gr}(R)P_i)(\sigma_i)\simeq (\oplus_{i\in I} P_i(\sigma_i))/J^{\rm
gr}(R)(\oplus_{i\in I} P_i(\sigma_i))$$ for some family $(P_i)_{i\in
I}$ of principal graded indecomposable $R$-modules, and some family
$(\sigma_i)_{i\in I}$ of elements of $G$. By Corollary
\ref{corisoproj} we get $U\simeq \oplus_{i\in I} P_i(\sigma_i)$.

For the uniqueness part, if $\oplus_{i\in I}U_i\simeq \oplus_{j\in
J}V_j$, where all $U_i$'s and $V_j$'s are shifts of principal
graded indecomposable $R$-modules, we have $$(\oplus_{i\in
I}U_i)/J^{\rm gr}(R)(\oplus_{i\in I}U_i)\simeq (\oplus_{j\in
J}V_j)/J^{\rm gr}(R)(\oplus_{j\in J}V_j),$$ and then $\oplus_{i\in
I}U_i/J^{\rm gr}(R)U_i\simeq \oplus_{j\in J}V_j/J^{\rm gr}(R)V_j$
as $R/J^{\rm gr}(R)$-modules. Both sides are direct sums of graded
simple modules, so the terms of the right side are isomorphic in
pairs, up to a permutation, to the ones in the left side.  Using
again Corollary \ref{corisoproj}, we get that the family
$(U_i)_{i\in I}$ is just $(V_j)_{j\in J}$ up to a permutation (in
fact a bijection from $I$ to $J$), and isomorphisms of graded
$R$-modules.
\end{proof}

As in the un-graded case, idempotents will play a prominent role
in the study of graded Artinian rings, in particular when
investigating graded principal indecomposables. Thus, if
$R=P_1\oplus \ldots \oplus P_n$ is a decomposition of the graded
ring $R$ into a sum of graded indecomposable left modules, then
$P_1=Re_1,\ldots,P_n=Re_n$ for a complete set $e_1,\ldots,e_n$ of
primitive idempotents of $R_\varepsilon$. Moreover, in this case
$R=e_1R\oplus \ldots e_nR$ is a decomposition of $R$ into a direct
sum of graded indecomposable right modules.

In order to study the isoshift types of graded principal
indecomposables, it is useful to note that if $e$ and $f$ are
idempotents in $R_\varepsilon$ and $\sigma \in G$, then $Re\simeq
Rf(\sigma)$ as graded left $R$-modules if and only if $eR\simeq
(\sigma^{-1})(fR)$ as graded right $R$-modules, see for example
\cite[Lemma 1.2]{vas}.

\section{Graded quasi-Frobenius rings} \label{sectionQF}

We recall that a ring $R$ is called quasi-Frobenius if it
satisfies any of the following equivalent conditions (see
\cite[Theorem 15.1]{lam2}: (1) $R$ is two-sided Artinian and it
satisfies the double annihilator condition for right ideals, i.e.,
${\rm ann}_r({\rm ann}_{\ell} A)=A$ for any right ideal $A$ of
$R$, and for left ideals, i.e., ${\rm ann}_{\ell}({\rm ann}_r
U)=U$ for any left ideal $U$ of $R$; (2) $R$ is left Noetherian
and satisfies the double annihilator condition for right ideals
and for left ideals; (3) $R$ is left Noetherian and injective as a
left $R$-module; (4) $R$ is right Noetherian and injective as a
left $R$-module.

The aim of this section is to introduce graded quasi-Frobenius
rings, by proving a graded version of the above mentioned theorem.
We mainly follow the approach in the un-graded case from
\cite[Section 15A]{lam2}. Several steps are similar to the
ungraded case, however new aspects dictated by the presence of
shifts occur at some other ones.
\begin{lemma} \label{lemaanulatori}
If the graded ring $R$ is graded left injective, then the following
hold.\\
{\rm (i)} ${\rm ann}_r(U)+{\rm ann}_r(V)={\rm ann}_r(U\cap V)$ for
any graded left ideals $U$ and $V$ of $R$.\\
{\rm (ii)} ${\rm ann}_r({\rm ann}_{\ell} A)=A$ for any finitely
generated graded right ideal $A$ of $R$.
\end{lemma}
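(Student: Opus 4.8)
The plan is to mimic the classical ungraded argument (as in \cite[\S 15A]{lam2}), but to be careful that all morphisms used are \emph{homogeneous} of the appropriate degree, so that the graded version of Baer's Theorem (Theorem \ref{gradedBaer}) applies. For part (i), one inclusion is immediate: $U\cap V\subseteq U$ and $U\cap V\subseteq V$ give ${\rm ann}_r(U)+{\rm ann}_r(V)\subseteq {\rm ann}_r(U\cap V)$. For the reverse inclusion, since both sides are graded right ideals it suffices to take a homogeneous element $x\in {\rm ann}_r(U\cap V)_g$ and produce a decomposition $x=y+z$ with $y\in {\rm ann}_r(U)_g$ and $z\in {\rm ann}_r(V)_g$. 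The idea is to define a map on $U+V$ by sending $u+v\mapsto vx$ for $u\in U$, $v\in V$; this is well defined precisely because $x$ annihilates $U\cap V$, and it is a morphism of degree $g$ of graded left $R$-modules from the graded left ideal $U+V$ to $R(g)$, i.e.\ from $U+V$ to $R$ of degree $g$. By Theorem \ref{gradedBaer} applied to the graded left ideal $U+V$, there is $z\in R_g$ with $(u+v)z$ equal to that value, i.e.\ $uz=0$ for all $u\in U$ and $vz=vx$ for all $v\in V$. Then $z\in {\rm ann}_r(U)_g$, and setting $y=x-z$ we get $vy=vx-vz=0$ for all $v\in V$, so $y\in {\rm ann}_r(V)_g$, as desired.

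For part (ii), the inclusion $A\subseteq {\rm ann}_r({\rm ann}_\ell A)$ holds for any subset. For the reverse, first reduce to the principal case: write $A=x_1R+\cdots+x_kR$ with the $x_i$ homogeneous; since ${\rm ann}_\ell(A)=\bigcap_i{\rm ann}_\ell(x_iR)=\bigcap_i{\rm ann}_\ell(x_i)$, part (i) applied repeatedly gives ${\rm ann}_r({\rm ann}_\ell A)=\sum_i{\rm ann}_r({\rm ann}_\ell x_i)$, and each ${\rm ann}_r({\rm ann}_\ell x_i)\supseteq x_iR$ would follow from ${\rm ann}_r({\rm ann}_\ell x_i)=x_iR$ in the principal case (noting $x_iR\subseteq A$). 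Hmm — one must be slightly careful here: one really wants ${\rm ann}_r({\rm ann}_\ell A)=\sum_i{\rm ann}_r({\rm ann}_\ell x_i)=\sum_i x_iR=A$, so the principal case is exactly what is needed. So suppose $A=xR$ with $x\in R_h$ homogeneous, and let $w\in {\rm ann}_r({\rm ann}_\ell x)$ be homogeneous of degree $g$. The map $Rx\to R$, $rx\mapsto rw$, is well defined (if $rx=0$ then $r\in{\rm ann}_\ell x\subseteq{\rm ann}_\ell w$, so $rw=0$) and is a morphism of degree $h^{-1}g$ of graded left $R$-modules from the graded left ideal $Rx$ to $R$. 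By Theorem \ref{gradedBaer} there is $t\in R_{h^{-1}g}$ with $rw=rxt$ for all $r\in R$; taking $r=1$ gives $w=xt\in xR=A$. This proves ${\rm ann}_r({\rm ann}_\ell x)\subseteq xR$, hence equality, completing part (ii).

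I do not expect a serious obstacle here; the content is really just bookkeeping of degrees so that Baer's criterion is invoked correctly. The one point that needs genuine attention is the reduction in (ii) from the finitely generated case to the principal case: it uses part (i) together with the fact that ${\rm ann}_\ell$ turns the sum $xR+yR$ into the intersection ${\rm ann}_\ell(x)\cap{\rm ann}_\ell(y)$, and then ${\rm ann}_r$ turns that intersection back into a sum by (i). An induction on the number of generators then finishes it. Everything else is the graded transcription of the standard proof, with $R(g)$-valued morphisms of the correct degree replacing plain module homomorphisms.
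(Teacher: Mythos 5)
Your proof is correct and is essentially the same argument as the paper's: part (i) defines the map $u+v\mapsto vx$ on $U+V$ and applies the graded Baer criterion to extract $z\in R_g$, and part (ii) handles the cyclic case via the map $rx\mapsto rw$ on $Rx$ and then reduces the finitely generated case to the cyclic one by combining part (i) with $\operatorname{ann}_\ell(\sum_i x_iR)=\bigcap_i\operatorname{ann}_\ell(x_i)$. The only difference is cosmetic (you present the reduction before the cyclic case; the paper does the opposite), and the degree bookkeeping is carried out correctly in both.
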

\begin{proof}
(i) Let $a$ be a homogeneous element of degree $g$ in ${\rm
ann}_r(U\cap V)$. Define a map $f:U+V\ra R$ by $f(u+v)=va$ for any
$u\in U,v\in V$. This is well defined: indeed, if $u+v=u'+v'$ with
$u,u'\in U$, $v,v'\in V$, then $u-u'=v'-v\in U\cap V$, so
$(v'-v)a=0$, therefore $va=v'a$. It is clear that $f$ is a morphism
of degree $g$ of graded left $R$-modules. Since $R$ is graded left
injective, $f$ is the right multiplication by some  $b\in R_g$. Thus
$va=ub+vb$ for any $u\in U,v\in V$. For $v=0$, this shows that
$ub=0$ for any $u\in U$, thus $b\in {\rm ann}_r(U)$, while for $u=0$
we get $v(a-b)=0$ for any $v\in V$, so $a-b\in {\rm ann}_r(V)$. Then
$a=b+(a-b)\in {\rm ann}_r(U)+{\rm ann}_r(V)$. This shows that ${\rm
ann}_r(U\cap V)\subset {\rm ann}_r(U)+{\rm ann}_r(V)$. The converse
is
obvious.\\
(ii) We first prove in the case where $A=aR$ is a cyclic right
ideal generated by a homogeneous element $a$ of degree $g$. Pick
some homogeneous element $b$ of degree $h$ in the graded right
ideal ${\rm ann}_r({\rm ann}_{\ell} A)$, and define the map
$f:Ra\ra R$ by $f(ra)=rb$ for any $r\in R$. First of all, $f$ is
well defined, since $ra=r'a$ implies that $r-r'\in {\rm
ann}_{\ell} A$, so $(r-r')b=0$. Moreover, $f$ is a morphism of
degree $g^{-1}h$ of graded left $R$-modules, and the injectivity
of $R$ shows that $f$ is the right multiplication by a homogeneous
element $c$ of degree $g^{-1}h$. Then $b=f(a)=ac$, so $b\in aR$.
Thus ${\rm ann}_r({\rm ann}_{\ell} A)\subset A$, and we have
equality since the converse always holds.

Now if $A$ is an arbitrary finitely generated graded right ideal,
let $a_1,\ldots ,a_n$ be a family of homogeneous generators of $A$.
Then \bea {\rm ann}_r({\rm ann}_{\ell} A)&=& {\rm
ann}_r(\bigcap_{1\leq
i\leq n}{\rm ann}_{\ell} (a_iR))\\
&=&\sum_{1\leq i\leq n}{\rm ann}_r({\rm ann}_{\ell} (a_iR)) \;\;\;\;
(\mbox{by (i)})\\
&=&\sum_{1\leq i\leq n} a_iR \;\;\;\; (\mbox{by the cyclic case
above})\\
&=&A \eea
\end{proof}

We say that a graded ring $R$ is graded Artinian if it is graded
left Artinian and graded right Artinian.

\begin{theorem} \label{gradedQF}
Let $R$ be a $G$-graded ring. The following
assertions are equivalent.\\
{\rm (1)} $R$ is graded Artinian and it satisfies the double
annihilator condition for graded right ideals, i.e., ${\rm
ann}_r({\rm ann}_{\ell} A)=A$ for any graded right ideal $A$ of $R$,
and for graded left ideals, i.e., ${\rm ann}_{\ell}({\rm ann}_r
U)=U$ for any graded left ideal
$U$ of $R$.\\
{\rm (2)} $R$ is graded left Noetherian and satisfies the double
annihilator condition for graded right ideals and for graded left ideals.\\
{\rm (3)} $R$ is graded left Noetherian and graded
left injective.\\
{\rm (4)} $R$ is graded right Noetherian and graded left injective.\\
\end{theorem}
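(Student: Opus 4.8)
The plan is to prove $(1)\Rightarrow(2)\Rightarrow(3)\Rightarrow(1)$, giving the equivalence of $(1)$, $(2)$, $(3)$, and then to note that $(1)\Rightarrow(2)\Rightarrow(3)$ already produces graded left injectivity while $(1)$ gives graded right Artinian, hence (Theorem \ref{HopkinsLevitzki}) graded right Noetherian, so that $(1)\Rightarrow(4)$ is free and only $(4)\Rightarrow(1)$ remains. The implication $(1)\Rightarrow(2)$ is immediate: a graded left Artinian ring has $J^{\rm gr}(R)$ nilpotent with $R/J^{\rm gr}(R)$ graded semisimple, hence is graded left Noetherian by Theorem \ref{HopkinsLevitzki}(2), and the two double annihilator conditions are carried over unchanged from $(1)$.

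For $(2)\Rightarrow(3)$ I would use the graded Baer criterion (Theorem \ref{gradedBaer}): it is enough to show every morphism $f\colon I\ra R$ of some degree $\sigma$ from a graded left ideal $I$ is right multiplication by an element of $R_\sigma$. Since $R$ is graded left Noetherian, $I=Ra_1+\dots+Ra_n$ with homogeneous $a_i$, and I induct on $n$. For $n=1$: from ${\rm ann}_\ell(a_1)\subseteq{\rm ann}_\ell(f(a_1))$, the identities ${\rm ann}_\ell(a_1)={\rm ann}_\ell(a_1R)$, ${\rm ann}_\ell(f(a_1))={\rm ann}_\ell(f(a_1)R)$, and the double annihilator condition for the graded right ideals $a_1R$, $f(a_1)R$, one obtains $f(a_1)R\subseteq a_1R$, so $f(a_1)=a_1c$ with $c$ the degree-$\sigma$ component of the multiplier, and then $f(ra_1)=(ra_1)c$. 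For the inductive step, subtract from $f$ the right multiplication already found on $J:=Ra_1+\dots+Ra_{n-1}$, reducing to globalizing a degree-$\sigma$ map that is $0$ on $J$ and right multiplication $\cdot c$ on $Ra_n$; the overlap forces $c\in{\rm ann}_r(J\cap Ra_n)$, and here I would first record that ${\rm ann}_r(U\cap V)={\rm ann}_r(U)+{\rm ann}_r(V)$ for graded left ideals $U,V$ --- which follows \emph{from the two double annihilator conditions alone} (apply ${\rm ann}_\ell$ to the graded right ideal ${\rm ann}_r(U)+{\rm ann}_r(V)$, then ${\rm ann}_r$ again) --- then write $c=c_1+c_2$ with $c_1\in{\rm ann}_r(J)$, $c_2\in{\rm ann}_r(Ra_n)$ homogeneous of degree $\sigma$, and check that $\cdot c_1$ agrees with the map on all of $I$.

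For $(3)\Rightarrow(1)$ I would proceed in stages. First, the radical machinery: by Proposition \ref{Jacobson=singular}, $\mathcal{Z}^{\rm gr}({}_RR)=J^{\rm gr}(R)$ and $R/J^{\rm gr}(R)$ is graded von Neumann regular, hence (being also graded left Noetherian) graded semisimple by Corollary \ref{semisimpleNoetherianregular}; by Proposition \ref{singularnilpotent} the ideal $J^{\rm gr}(R)=\mathcal{Z}^{\rm gr}({}_RR)$ is nilpotent; so Theorem \ref{HopkinsLevitzki}(2) makes $R$ graded left Artinian. Second, graded right Noetherianity: by graded left injectivity, Lemma \ref{lemaanulatori}(ii) gives ${\rm ann}_r({\rm ann}_\ell(A))=A$ for finitely generated graded right ideals $A$; applying ${\rm ann}_\ell$ to an ascending chain of such ideals gives a descending chain of graded left ideals, which is stationary since $R$ is graded left Artinian, hence the original chain is stationary, i.e.\ $R$ has ACC on finitely generated graded right ideals and so is graded right Noetherian; then $J^{\rm gr}(R)$ nilpotent and $R/J^{\rm gr}(R)$ graded semisimple make $R$ graded right Artinian by the right-hand form of Theorem \ref{HopkinsLevitzki}(2), and now Lemma \ref{lemaanulatori}(ii) applies to every graded right ideal, yielding the double annihilator condition for graded right ideals. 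Third, the double annihilator condition for graded left ideals: write ${}_RR=Re_1\oplus\dots\oplus Re_n$ with $Re_i$ graded indecomposable; each $Re_i$ is graded injective and indecomposable, hence has graded simple socle $S_i$ (if ${\rm soc}^{\rm gr}(Re_i)$ had two simple summands, their graded injective envelopes inside $Re_i$ would be nonzero graded summands with zero intersection, splitting $Re_i$). By Proposition \ref{isotypessimples} the isoshift types of principal graded indecomposable left modules biject with the isoshift types of all graded simple left $R$-modules, while $P\mapsto{\rm soc}^{\rm gr}(P)$ is injective on isoshift types (it recovers $P$ as a graded injective envelope), so by cardinality it too is a bijection: every graded simple left $R$-module embeds, up to a shift, into ${}_RR$ (i.e.\ $R$ is graded left Kasch, cf.\ Lemma \ref{scufundaresimple}). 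Finally, for a graded left ideal $U$ put $W={\rm ann}_\ell({\rm ann}_r(U))\supseteq U$; if $W\supsetneq U$, take a graded simple submodule $V/U$ of $W/U$, use left Kasch to get a nonzero graded morphism $V\ra{}_RR$ that kills $U$, extend it by graded left injectivity to a right multiplication $\cdot c$ with $c\in{\rm ann}_r(U)$, and note $Vc\ne 0$ contradicts $V\subseteq W$; hence $W=U$, proving $(1)$.

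Finally $(4)\Rightarrow(1)$, which is where I expect the main obstacle. Graded left injectivity plus Lemma \ref{lemaanulatori}(ii), together with graded right Noetherianity, give the double annihilator condition for \emph{all} graded right ideals, hence an order-reversing bijection between graded right ideals and graded left ideals of the form ${\rm ann}_\ell(A)$. The difficulty is the left--right asymmetry of $(4)$: the radical argument above required graded \emph{left} Noetherianity, which is not assumed, so one cannot immediately conclude that $R$ is graded left Artinian. The plan, following the ungraded proof in \cite[Section 15A]{lam2}, is to extract from the double annihilator condition for graded right ideals --- using graded right Noetherianity and graded left injectivity --- the missing finiteness on the left, so as to fall back to case $(3)$; I expect this transfer between the two sides to be the delicate step, the rest being routine graded adaptations of the classical manipulations.
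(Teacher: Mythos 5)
Your implications $(1)\Rightarrow(2)$ and $(2)\Rightarrow(3)$ are correct and match the paper's proof: for $(2)\Rightarrow(3)$ you use the same derivation of ${\rm ann}_r(U\cap V)={\rm ann}_r(U)+{\rm ann}_r(V)$ from the two double annihilator conditions and the same induction through graded Baer.

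Your $(3)\Rightarrow(1)$ is correct but takes a somewhat different route from the paper. The paper proves $(3)\Rightarrow(4)\Rightarrow(1)$; your first two stages coincide with $(3)\Rightarrow(4)$ (the radical argument giving graded left Artinian, then graded right Noetherian/Artinian via the double annihilator condition for f.g.\ graded right ideals). Where you genuinely diverge is in establishing the left Kasch property. The paper obtains it in $(4)\Rightarrow(1)$ via Lemma \ref{scufundaresimple} applied on the right, then transfers from graded simple right modules to graded simple left modules using isoshift components (which are two-sided) and Proposition \ref{contineminimalideal}. You instead argue directly on the left: each $Re_i$ is graded injective and indecomposable, hence uniform, hence has graded simple socle; the map $P\mapsto{\rm soc}^{\rm gr}(P)$ is injective on isoshift types (by recovering $P$ as the graded injective envelope), and combined with the bijection in Proposition \ref{isotypessimples} a counting argument makes it surjective. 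This is a clean and valid alternative, and it avoids the detour through the right side. Your final double-annihilator argument via a graded simple subquotient of $W/U$, a graded morphism into $R$ via Kasch, and extension by injectivity to a right multiplication is correct.

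The genuine gap is $(4)\Rightarrow(1)$, which you explicitly leave open, and which is not covered by your cycle since it is the only implication starting from $(4)$. You correctly identify the obstruction: without graded left Noetherianity you cannot invoke Proposition \ref{singularnilpotent} to get $\mathcal{Z}^{\rm gr}({}_RR)=J^{\rm gr}(R)$ nilpotent, and without that the whole chain of consequences (graded semisimple quotient, Hopkins--Levitzki, Artinianity on either side) does not start. The missing idea, and the one the paper uses, is to prove nilpotence of $J^{\rm gr}(R)$ by an entirely different mechanism that only uses the right-side hypotheses: the ascending chain of graded (two-sided) ideals ${\rm ann}_r(J^{\rm gr}(R))\subset{\rm ann}_r(J^{\rm gr}(R)^2)\subset\cdots$ stabilizes because $R$ is graded right Noetherian; applying the double annihilator condition for graded right ideals (available from Lemma \ref{lemaanulatori}(ii) since every graded right ideal is f.g.) then gives $J^{\rm gr}(R)^n=J^{\rm gr}(R)^{n+1}$ for some $n$, and the graded Nakayama lemma applied to the f.g.\ graded right $R$-module $J^{\rm gr}(R)^n$ forces $J^{\rm gr}(R)^n=0$. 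From there one gets $R/J^{\rm gr}(R)$ graded semisimple (von Neumann regular by injectivity, Noetherian on the right), hence $R$ graded right Artinian, then the right Kasch property via Lemma \ref{scufundaresimple}, transfer to the left via the isoshift components, the double annihilator condition for graded left ideals, and finally graded left Artinianity by translating DCC on graded left ideals into ACC on their right annihilators. Without this alternate route to nilpotence, your plan to ``fall back to case $(3)$'' has no foothold.
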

\begin{proof}
${\rm (1)}\Rightarrow {\rm (2)}$ follows from Theorem
\ref{HopkinsLevitzki}.\\
${\rm (2)}\Rightarrow {\rm (3)}$   We first see that if $A$ and $B$
are left graded ideals of $R$, then \bea{\rm ann}_{\ell}({\rm
ann}_rA+{\rm ann}_rB)&=&{\rm ann}_{\ell}({\rm
ann}_r A)\cap {\rm ann}_{\ell}({\rm ann}_r B)\\
&=&A\cap B \eea so then \bea {\rm ann}_r(A\cap B)&=&{\rm ann}_r({\rm
ann}_{\ell}({\rm ann}_r A+{\rm ann}_r B))\\
&=& {\rm ann}_r A+{\rm ann}_r B\eea

We show that $R$ is injective in the category $R-gr$ by using the
graded version of Baer's Theorem. Let $I$ be a graded left ideal of
$R$, and let $f:I\ra R$ be a morphism of degree $g\in G$. Since $R$
is graded left Noetherian, we have  $I=\sum_{i=1,n}Rc_i$ for some
homogeneous elements $c_1,\ldots ,c_n\in R$. We show by induction on
$n$ that there exists $y\in R_g$ such that $f(a)=ay$ for any $a\in
I$.

For $n=1$, let $d=f(c_1)$. Then ${\rm ann}_{\ell}(c_1)d=0$ since
$rc_1=0$ implies that $rd=rf(c_1)=f(rc_1)=f(0)=0$. It follows that
$d\in {\rm ann}_r({\rm ann}_{\ell} (c_1))={\rm ann}_r({\rm
ann}_{\ell} (c_1R))=c_1R$, so $d=c_1y$ for some $y\in R$. Since
$c_1$ and $d$ are homogeneous elements and ${\rm deg}(d)={\rm
deg}(c_1)g$, we can choose $y$ to be homogeneous of degree $g$. Now
$f(rc_1)=rf(c_1)=rd=rc_1y$ for any $r\in R$, and we are done.

Assume the statement holds true for $n-1$, and we prove it for $n$.
Let $I=\sum_{i=1,n}Rc_i$ with homogeneous $c_1,\ldots ,c_n\in R$,
and let $J=\sum_{i=2,n}Rc_i$. By the induction hypothesis, the
restriction of $f$ to $J$ is the right multiplication with some
$x\in R_g$. By the case $n=1$, the restriction of $f$ to $Rc_1$ is
the right multiplication to some $y\in R_g$. Now if $a\in Rc_1\cap
J$, then $ay=f(a)$ and $ax=f(a)$, so $a(x-y)=0$. Thus $x-y\in {\rm
ann}_r(Rc_1\cap J)={\rm ann}_r(Rc_1)+{\rm ann}_r(J)$. Moreover,
$x-y$ is homogeneous of degree $g$, so then there exist $y'\in {\rm
ann}_r(Rc_1)$ and $x'\in {\rm ann}_r(J)$, both homogeneous of degree
$g$, such that $x-y=x'-y'$. We show that $f$ is the right
multiplication by $z=x-x'=y-y'$, and then we are done. Indeed, let
$c\in I$, and write $c=a+b$ with $a\in Rc_1$
and $b\in J$. Then \bea f(c)&=&f(a)+f(b)\\
&=&ay+bx\\
&=&a(y-y')+ay'+b(x-x')+bx'\\
&=&az+bz\\
&=&cz \eea Note that we used that $ay'=0$ and $bx'=0$.\\
{\rm (3)}$\Rightarrow$ {\rm (4)} Since $R$ is graded left
injective, we have by Proposition \ref{Jacobson=singular} that
$\mathcal{Z}^{\rm gr}(_RR)=J^{\rm gr}(R)$ and $R/J^{\rm gr}(R)$ is
graded von Neumann regular. As $R/J^{\rm gr}(R)$ is also graded
left Noetherian, Corollary \ref{semisimpleNoetherianregular} shows
that it is graded semisimple. By Proposition
\ref{singularnilpotent}, $\mathcal{Z}^{\rm gr}(_RR)$ is nilpotent,
and then so is $J^{\rm gr}(R)$. Now Theorem \ref{HopkinsLevitzki}
shows that any graded left Noetherian module is also graded left
Artinian; in particular $R$ is graded left Artinian.

Now we show that $R$ is graded right Noetherian. Indeed, if we
assume it is not like this, let $I_1\subsetneqq I_2 \varsubsetneqq
\ldots $ be an infinite chain of graded right ideals. Pick some
homogeneous elements $a_1\in I_1, a_2\in I_2\setminus I_1, \ldots $,
and denote $U_1=a_1R, U_2=a_1R+a_2R, \ldots$. We get an infinite
chain of finitely generated graded right ideals $U_1\subsetneqq U_2
\varsubsetneqq \ldots $. Then ${\rm ann}_l(U_1)\supset {\rm
ann}_l(U_1)\supset \ldots$ is a sequence of graded left ideals, so
it terminates since $R$ is graded left Artinian. Now ${\rm
ann}_l(U_m)={\rm ann}_l(U_{m+1})=\ldots $ implies that ${\rm
ann}_r({\rm ann}_l(U_m))={\rm ann}_r({\rm ann}_l(U_{m+1}))=\ldots $,
so by Lemma \ref{lemaanulatori} we get $U_m=U_{m+1}=\ldots $, a contradiction.\\
{\rm (4)}$\Rightarrow$ {\rm (1)} Since any graded right ideal of $R$
is finitely generated, we get from Lemma \ref{lemaanulatori} that
the double annihilator condition is satisfied for graded right
ideals.

The ascending sequence of graded ideals ${\rm ann}_r(J^{\rm
gr}(R))\subset {\rm ann}_r((J^{\rm gr}(R))^2)\subset \ldots $
terminates since $R$ is graded right Noetherian,  so ${\rm
ann}_r((J^{\rm gr}(R))^n)={\rm ann}_r((J^{\rm
gr}(R))^{n+1})=\ldots$ for some $n$. Using the double annihilator
condition for graded right ideals we see that $(J^{\rm
gr}(R))^n=(J^{\rm gr}(R))^{n+1}$. Now the graded version of
Nakayama's Lemma (see \cite[Corollary 2.9.2]{nvo}) shows that the
finitely generated graded right $R$-module $(J^{\rm gr}(R))^n$
must be zero.

As in {\rm (3)}$\Rightarrow$ {\rm (4)}, the injectivity of $R$ as
a graded left module implies that $R/J^{\rm gr}(R)$ is graded von
Neumann regular. As $R/J^{\rm gr}(R)$ is also graded right
Noetherian, the version of Corollary
\ref{semisimpleNoetherianregular} to the right shows that it is
graded semisimple. Since $R$ is graded right Noetherian, we get
that it is also graded right Artinian by the version of Theorem
\ref{HopkinsLevitzki} to the right.

Now Lemma \ref{scufundaresimple} shows that all isoshift types of
graded simple right modules can be found inside $R$, since the
double annihilator condition holds for maximal graded right
ideals. Let $\mathcal{U}_1,\ldots, \mathcal{U}_n$ be the isoshift
type components of the right graded $R$-module $R$. By Proposition
\ref{contineminimalideal} , each $\mathcal{U}_i$ contains a
minimal graded left ideal $\Sigma_i$. We claim that
$\Sigma_1,\ldots,\Sigma_n$ lie in different isoshift classes.
Indeed, if $\Sigma_i\simeq \Sigma_j(g)$ for some $i\neq j$ and
some $g\in G$, then by Proposition \ref{simpleizomorfe} we get
that $\Sigma_j=\Sigma_ir$ for some $r\in R_g$. But then
$\Sigma_j\subset \mathcal{U}_ir\subset \mathcal{U}_i$. As
$\Sigma_j\subset \mathcal{U}_j$, this provides a contradiction.
Since $R$ is graded right Artinian, the number of isoshift types
of graded simple left $R$-modules is also $n$, so then $\Sigma_1,
\ldots,\Sigma_n$ is a system of representatives for all these
isoshift types. We conclude that all isoshift types of graded
simple left modules can be found inside $R$.

Next we show that for any non-zero graded left $R$-module $M$, there
exist some $g\in G$ and a non-zero morphism of degree $g$ of graded
left $R$-modules $f:M\ra R$. Indeed, since $J^{\rm gr}(R)$ is
nilpotent, we have that $J^{\rm gr}(R)M\neq M$. Then $M/J^{\rm
gr}(R)M$ is a non-zero graded left $R/J^{\rm gr}(R)$-module.
Moreover, $M/J^{\rm gr}(R)M$ is a sum of graded simple left
$R/J^{\rm gr}(R)$-modules, since $R/J^{\rm gr}(R)$ is graded
semisimple. In particular, there is a graded simple left $R/J^{\rm
gr}(R)$-module $S$ and a surjective morphism $\phi:M\ra S$ of graded
left $R/J^{\rm gr}(R)$-modules. But this is also a surjective
morphism of graded left $R$-modules, and $S$ is also simple as a
graded $R$-module. Moreover, since the isoshift type $S$ lies inside
$R$, there is an injective morphism of degree $g$ of graded left
$R$-modules $\psi:S\ra R$ for some $g\in G$. If $\pi:M\ra M/J^{\rm
gr}(R)M$ is the natural projection, then $\psi\phi \pi:M\ra R$ is a
non-zero morphism of degree $g$ of graded left $R$-modules.

Now we show that the double annihilator condition holds for graded
left ideals. Indeed, if we assume that for a graded left ideal $U$
we have ${\rm ann}_{\ell}({\rm ann}_r U)\neq U$, let $M={\rm
ann}_{\ell}({\rm ann}_r U)/U$, a non-zero graded left $R$-module,
and let $\pi:{\rm ann}_{\ell}({\rm ann}_r U)\ra M$ be the natural
projection. We showed above that there exists a non-zero morphism
$f:M\ra R$ of degree $g$ of graded left $R$-modules for some $g\in
G$. Then $f\pi:{\rm ann}_{\ell}({\rm ann}_r U)\ra R$ is a morphism
of degree $g$ of graded left $R$-modules, and the injectivity of $R$
shows that $f\pi$ is the right multiplication by some $r\in R_g$. As
$f\pi (U)=0$, we see that $Ur=0$, so $r\in {\rm ann}_r U$. Therefore
for any $x\in {\rm ann}_{\ell}({\rm ann}_r U)$ we have
$0=xr=(f\pi)(x)$. Thus $f\pi=0$, and so $f=0$, a contradiction.

Finally, we show that $R$ is also graded left Artinian. Indeed, let
$U_1\supset U_2\supset \ldots $ be a descending chain of graded left
ideals of $R$. Then ${\rm ann}_r(U_1)\subset {\rm ann}_r(U_2)\subset
\ldots$ is an ascending chain of graded right ideals, so ${\rm
ann}_r(U_m)\subset {\rm ann}_r(U_{m+1})= \ldots$ for some $m$. Then
${\rm ann}_{\ell}({\rm ann}_r U_m)={\rm ann}_{\ell}({\rm ann}_r
U_{m+1})=\ldots$, and the double annihilator condition shows that
$U_m=U_{m+1}=\ldots$, which ends the proof.
\end{proof}

\begin{remark} \label{definitiegradedQF}
{\rm A graded ring $R$ is called {\it graded quasi-Frobenius} if
it satisfies the equivalent conditions of Theorem \ref{gradedQF}.
We note that condition {\rm (1)} in the Theorem is left-right
symmetric, so we can add more equivalent conditions to the
theorem: any condition saying that $R$ is graded Noetherian at one
side and graded injective at one side, also the condition saying
that $R$ is graded right Noetherian and it satisfies the two
double annihilator conditions.}
\end{remark}

We note that if a graded ring $R$ is quasi-Frobenius as a ring, then
it is graded quasi-Frobenius. Indeed, since $R$ is injective as a
left $R$-module, it follows that $R$ is graded left injective by
\cite[Corollary 2.3.2]{nvo}, and since $R$ is left Noetherian, we
clearly have that $R$ is graded left Noetherian. The converse does
not hold in general, thus a graded ring $R$ may be graded
quasi-Frobenius without being quasi-Frobenius. For example, let
$R=k[X,X^{-1}]$ be the Laurent polynomial ring over a field $k$,
with its usual $\mathbb{Z}$-graded ring structure. Then $R$ is a
graded division ring, so it is graded injective in view of Theorem
\ref{gradedBaer}. Obviously, it is also graded Noetherian. Thus $R$
is graded quasi-Frobenius. However, $R$ is not a quasi-Frobenius
ring, since it is not injective as an $R$-module, see for example
\cite[Remark 2.3.3]{nvo}.

In the case where the graded ring $R$ has finite support, i.e.,
only finitely many homogeneous components $R_g$ are non-zero, in
particular when $G$ is a finite group, the graded quasi-Frobenius
and quasi-Frobenius conditions on $R$ are equivalent, as the
following shows.

\begin{proposition}
Let $R$ be a graded ring of finite support. Then $R$ is graded
quasi-Frobenius if and only if it is a quasi-Frobenius ring.
\end{proposition}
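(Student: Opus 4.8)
The plan is to treat the two implications separately, the substance being the forward one, since the backward one has already been recorded above with no hypothesis on the support: if $R$ is quasi-Frobenius then $R$ is injective as a left $R$-module, hence graded left injective by \cite[Corollary 2.3.2]{nvo}, and $R$ is left Noetherian, hence graded left Noetherian, so $R$ is graded quasi-Frobenius by Theorem \ref{gradedQF}(3). Assume now that $R$ is graded quasi-Frobenius with finite support. By Theorem \ref{gradedQF}(3), $R$ is graded left Noetherian and graded left injective; I would show that, in the presence of finite support, these two properties force $R$ to be left Noetherian and injective as a left $R$-module, and then $R$ is quasi-Frobenius by condition (3) of the ungraded characterization recalled at the beginning of this section.

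For the Noetherian property I would argue by hand. Let $S=\mathrm{supp}(R)$, which is finite. For each $g\in S$ the assignment sending a left $R_\varepsilon$-submodule $M$ of $R_g$ to the left ideal $RM$ of $R$ is order preserving and injective: $RM$ is a graded left ideal because $M\subseteq R_g$ consists of homogeneous elements of degree $g$, and a direct computation of homogeneous components gives $(RM)_g=R_\varepsilon M=M$, since $M$ is already an $R_\varepsilon$-submodule. As $R$ is graded left Noetherian, it satisfies the ascending chain condition on graded left ideals, so each $R_g$ is a Noetherian left $R_\varepsilon$-module; since $S$ is finite, $R=\oplus_{g\in S}R_g$ is a Noetherian left $R_\varepsilon$-module, and in particular $R$ satisfies the ascending chain condition on left ideals. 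Thus $R$ is left Noetherian. (The same argument with descending chains shows $R$ is left Artinian, but this is not needed for the characterization used here.)

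For the injectivity property I would invoke the known fact that, for a graded ring of finite support, a graded module is injective in $R-gr$ if and only if it is injective as an ordinary left $R$-module (see \cite{nvo}); applied to $_RR$ this gives precisely that $R$ is injective as a left $R$-module. One conceptual reason is that, for finite support, the forgetful functor from $R-gr$ to the category of left $R$-modules admits an exact left adjoint, hence preserves injective objects; the hypothesis of finite support genuinely cannot be dropped here, as the Laurent polynomial ring discussed just before this proposition shows. Combining the two paragraphs, $R$ is left Noetherian and left self-injective, hence quasi-Frobenius, which finishes the proof.

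The step I expect to be the real obstacle is the injectivity transfer: unlike the Noetherian (or Artinian) transfer, it is not a routine chain-condition argument, and it is exactly the point where finiteness of the support is indispensable. If one prefers to avoid appealing to the general injectivity statement, an alternative route would be to try to deduce the ungraded double annihilator conditions of the quasi-Frobenius characterization directly from the graded ones of Theorem \ref{gradedQF}(1), together with $R$ being ungraded Artinian; but this runs into the difficulty that the left annihilator of a non-homogeneous element need not be a graded left ideal, so I would still expect the cleanest write-up to pass through graded left injectivity as above.
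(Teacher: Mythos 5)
Your proof is correct and follows essentially the same route as the paper's: both pass through the characterization (Theorem~\ref{gradedQF}(3)) that graded quasi-Frobenius means graded left Noetherian plus graded left injective, transfer the Noetherian condition by the elementary observation that each $R_g$ is Noetherian over $R_\varepsilon$ (your order-embedding $M\mapsto RM$ makes this explicit where the paper just says ``it is easy to see''), and invoke a known injectivity-transfer result for gradings of finite support. The only difference worth flagging is bibliographic: the paper cites \cite[Theorem 3.9]{dnrv} for the fact that graded injective implies injective when the support is finite, whereas you refer loosely to \cite{nvo} and sketch an adjoint-functor heuristic; that heuristic (the forgetful functor having an exact left adjoint in the finite-support case) is a reasonable slogan, but it should either be pinned to a precise statement or replaced by the \cite{dnrv} reference, since the existence and exactness of that left adjoint is exactly the nontrivial content being used.
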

\begin{proof}
If $R$ is a graded ring of finite support which is graded left
injective, then $R$ is injective as a left $R$-module by
\cite[Theorem 3.9]{dnrv}. On the other hand, if $R$ is graded left
Noetherian, then it is easy to see that each $R_g$ is a Noetherian
left $R_\varepsilon$-module. Since $R$ has finite support, we get
that $R$ is a Noetherian left $R_\varepsilon$-module, so it is
Noetherian as a left $R$-module, too.
\end{proof}

At the end of this section we list some properties of graded
quasi-Frobenius rings that follow from the proof of Theorem
\ref{gradedQF}. Thus let $R$ be a graded quasi-Frobenius ring.
Then:

$\bullet$ Any isoshift type of graded simple module to the left
and to the right can be found inside $R$. Thus for any graded
simple left (respectively right) $R$-module $\Sigma$ there exists
$g\in G$ such that $\Sigma (g)$ (respectively $(g)\Sigma$) embeds
into $R$.

$\bullet$ The number of isoshift types of graded simple left
$R$-modules is equal to the number of isoshift types of graded
simple right $R$-modules. Moreover, the left isoshift components of
$R$  coincide with the right isoshift components.

$\bullet$ As a consequence, ${\rm soc}^{\rm gr}_l(R)={\rm soc}^{\rm
gr}_r(R)$. Since $R$ is graded left  Artinian, we use Remark
\ref{socluanulator} to see that ${\rm soc}^{\rm gr}_l(R)={\rm
ann}_r(J^{\rm gr}(R))$ and ${\rm soc}^{\rm gr}_r(R)={\rm
ann}_{\ell}(J^{\rm gr}(R))$. We conclude that
$${\rm soc}^{\rm gr}_l(R)={\rm soc}^{\rm gr}_r(R)={\rm ann}_r(J^{\rm gr}(R))={\rm ann}_{\ell}(J^{\rm
gr}(R)).$$

\section{More properties of graded quasi-Frobenius rings}
\label{sectionmoreQF}

The following result is a graded version of \cite[Theorem
15.9]{lam2} and the proof is inspired by it. However, some general
results about Grothendieck categories are needed.

\begin{theorem}
Let $R$ be graded ring. The following are equivalent:\\
{\rm (1)} $R$ is graded quasi-Frobenius.\\
{\rm (2)} The classes of injective objects and projective objects
coincide in the category $R-gr$.
\end{theorem}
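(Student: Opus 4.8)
The plan is to follow the classical Faith–Walker–style argument adapted to the Grothendieck category $R-gr$. For the implication $(1)\Rightarrow(2)$, suppose $R$ is graded quasi-Frobenius. First I would note that $R-gr$ is a locally finite Grothendieck category with the family of generators $(R(\sigma))_{\sigma\in G}$; since $R$ is graded left Noetherian, each $R(\sigma)$ is a Noetherian object, so $R-gr$ is locally Noetherian. A theorem of Faith (in the categorical form, due to the fact that in a locally Noetherian Grothendieck category a direct sum of injective objects is injective, and conversely) will give that the condition ``injectives coincide with projectives'' forces strong finiteness. But more directly: since $R$ is graded left injective (condition (3) of Theorem \ref{gradedQF}), each generator $R(\sigma)$ is injective, hence every projective object, being a direct summand of a direct sum $\oplus_i R(\sigma_i)$, is injective provided such direct sums of injectives are injective — and this holds in a locally Noetherian category. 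Conversely, I must show every injective object is projective. Using Theorem \ref{teoremastructuraproiective}, every projective object is a direct sum of shifts of the principal graded indecomposables $P_1,\dots,P_n$; since $R$ is also graded left Artinian, these have finite length and local endomorphism rings, and each $P_i$, being a direct summand of $R$, is graded injective. One then shows that the $P_i(\sigma)$ are precisely (up to iso) the indecomposable injective objects: every indecomposable injective $E$ has an essential graded simple submodule $S$ (since $R-gr$ is locally finite), and by the first bullet list at the end of Section \ref{sectionQF} some shift $S(g)$ embeds into $R$, hence into some $P_i$; as $P_i$ is injective and $S(g)$ is essential in its injective hull, $E\simeq E(P_i) = P_i$ up to the shift $g^{-1}$ — so $E$ is projective. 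Finally, an arbitrary injective object in a locally Noetherian category is a direct sum of indecomposable injectives, hence a direct sum of shifts of the $P_i$, hence projective.

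For the converse $(2)\Rightarrow(1)$, assume injectives and projectives coincide in $R-gr$. Then $R$ itself, being projective (it is the generator), is injective, i.e., $R$ is graded left injective. It remains to establish a graded-Noetherian hypothesis so that Theorem \ref{gradedQF}(3) applies. The key point is that the hypothesis also forces every graded injective object to be projective; in particular, arbitrary direct sums of copies of $R$, being projective hence direct sums of the $R(\sigma)$'s, must be injective — from which one deduces (by a graded version of the Bass–Papp / Faith theorem) that $R-gr$ is locally Noetherian, hence $R$ is graded left Noetherian. Here I would invoke the categorical statement: in a Grothendieck category with a generator, if every direct sum of injective objects is injective then the category is locally Noetherian. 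Since each $R(\sigma)$ is injective by hypothesis, and any coproduct of the generators is projective hence injective by (2), this hypothesis is met. Thus $R$ is graded left Noetherian and graded left injective, so $R$ is graded quasi-Frobenius by Theorem \ref{gradedQF}.

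The main obstacle I anticipate is the careful handling of the shifts: the indecomposable injective objects of $R-gr$ must be identified up to isoshift with the shifts of the principal indecomposable projectives, and this identification rests on the fact — available from the end of Section \ref{sectionQF} — that every isoshift type of graded simple module embeds into $R$ after a suitable shift, together with the locally finite structure of $R-gr$ guaranteeing essential simple socles. A second delicate point is justifying that a direct sum of graded injectives is graded injective in the locally Noetherian setting, and conversely that the ``all injectives are projective'' condition propagates to arbitrary coproducts, which is exactly where the categorical Bass–Papp argument enters; I would cite the relevant statement about locally Noetherian Grothendieck categories rather than reprove it. With these two ingredients in place, the rest of the proof is a bookkeeping exercise matching Theorem \ref{teoremastructuraproiective} against the structure of injective objects.
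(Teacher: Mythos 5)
Your overall strategy is precisely the one the paper takes: both directions hinge on a graded version of the Bass--Matlis--Papp theorem (the paper cites Stenstr\"om's categorical result \cite{stARK}). The $(1)\Rightarrow(2)$ direction is fine, and your identification of indecomposable injectives with shifts of the principal indecomposables is just a slightly more elaborate phrasing of the paper's argument, which shows each indecomposable injective summand is a direct summand of some $R(\sigma_i)$ via an embedded graded simple; both rest on the same bullet from the end of Section~\ref{sectionQF} that every isoshift type of graded simple embeds into $R$.

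There is, however, a small but genuine slip in your verification of the Bass--Papp hypothesis for $(2)\Rightarrow(1)$. You write that ``each $R(\sigma)$ is injective by hypothesis, and any coproduct of the generators is projective hence injective by (2), this hypothesis is met.'' But the hypothesis you must verify is that \emph{arbitrary} direct sums of injective objects are injective, not merely direct sums of the $R(\sigma)$'s. The correct (and very short) argument, which is the one the paper actually gives, is: if $(E_i)_i$ is any family of injectives, then each $E_i$ is projective by (2), so $\bigoplus_i E_i$ is projective (a direct sum of projectives in $R-gr$ is projective), hence injective by (2). Once this is stated properly, your proof coincides with the paper's.
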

\begin{proof}
A key result that we need is the following graded version of
Bass-Matlis-Papp Theorem: if $R$ is a graded ring, then the
following are equivalent: (a) $R$ is graded left Noetherian; (b) Any
direct sum of injective objects in $R-gr$ is graded injective; (c)
Any injective object in $R-gr$ is a direct sum of indecomposable
injective objects in $R-gr$. This follows from the more general
result \cite[Theorem 3]{stARK}, formulated for certain Grothendick
categories.

Assume now that $R$ is graded quasi-Frobenius. If $P$ is a
projective object in $R-gr$, then $P$ is a direct summand in
$\oplus_{i\in I}R(\sigma_i)$ for some family $(\sigma_i)_{i\in I}$
of elements of $G$. Each $R(\sigma_i)$ is graded injective since so
is $R$, and then $\oplus_{i\in I}R(\sigma_i)$ is graded injective by
(a)$\Rightarrow$(b) in the general result mentioned above. We obtain
that $P$ is graded injective. On the other hand, if $Q$ is an
injective object in $R-gr$, we use (a)$\Rightarrow$(c) in the
general result to see that $Q=\oplus_{i\in I} Q_i$, a direct sum of
injective indecomposable objects in $R-gr$. Since $R$ is graded left
Artinian, each $Q_i$ contains a graded simple module $S_i$, and then
$Q_i=E^{\rm gr}(S_i)$, the injective envelope of $S_i$ in $R-gr$. On
the other hand, $S_i$ embeds into $R(\sigma_i)$ for some
$\sigma_i\in G$. As $R(\sigma_i)$ is graded injective, we get that
$Q_i$ embeds into $R(\sigma_i)$, hence it is a direct summand. This
shows that each $Q_i$ is graded projective, and then so is
$Q=\oplus_{i\in I} Q_i$.

Conversely, assume that injectives and projectives are the same in
$R-gr$. Then $R$ is injective in $R-gr$, since it is obviously
projective. On the other hand, any direct sum of injectives
(=projectives) in $R-gr$ is projective (=injective), and then $R$ is
graded left Noetherian by (b)$\Rightarrow$(a) in the general result.
We obtain that $R$ is graded quasi-Frobenius.
\end{proof}

Let $M\in R-gr$. We denote by $M\dual$  the graded right
$R$-module ${\rm HOM}_R(M,R)$, which is a submodule of ${\rm
Hom}_R(M,R)$. As we explained before Theorem
\ref{teoremagradedsimple}, $M\dual={\rm Hom}_R(M,R)$ whenever $M$
is finitely generated. Similarly, we consider the graded left
$R$-module $ N\dual={\rm HOM}_R(N,R)$ for any graded right
$R$-module $N$. The natural map $\varphi_M:M\ra
(M\dual)\dual,\varphi (m)(f)=f(m)$ for any $m\in M$ and $f\in
M\dual$, is a morphism of graded left $R$-modules. We will use the
simpler notation $M\dual\dual$ for $(M\dual)\dual$.

\begin{lemma} \label{lemadualsuspensie}
Let $M\in R-gr$ and let $\tau\in G$. Then $M(\tau)\dual=
(\tau^{-1})M\dual$.
\end{lemma}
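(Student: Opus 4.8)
The plan is simply to unwind the definitions of the two kinds of shifts and of ``morphism of a given degree'', and then to do the index bookkeeping; there is no real difficulty here. Throughout, recall that for a graded left $R$-module $M$, the degree-$\sigma$ component ${\rm HOM}_R(M,R)_\sigma$ consists of the $R$-linear maps $f\colon M\ra R$ with $f(M_g)\subset R_{g\sigma}$ for all $g\in G$, and that $M\dual={\rm HOM}_R(M,R)$ carries the right $R$-action $(f\cdot r)(m)=f(m)r$.

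First I would observe that $M(\tau)\dual$ and $M\dual$ have literally the same underlying right $R$-module. Indeed, $M(\tau)$ coincides with $M$ as an (ungraded) $R$-module, so an $R$-linear map $M(\tau)\ra R$ is the same thing as an $R$-linear map $M\ra R$, and the right action $(f\cdot r)(m)=f(m)r$ is given by the same formula in both cases; moreover the shift $(\tau^{-1})(-)$ of a graded right module does not change its underlying module. Thus it only remains to check that the two $G$-gradings agree.

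Next I would compute the grading of $M(\tau)\dual$. A map $f$ lies in $M(\tau)\dual_\sigma$ precisely when $f(M(\tau)_g)\subset R_{g\sigma}$ for all $g\in G$; since $M(\tau)_g=M_{g\tau}$, substituting $h=g\tau$ this condition reads $f(M_h)\subset R_{h\tau^{-1}\sigma}$ for all $h\in G$, i.e.\ $f\in M\dual_{\tau^{-1}\sigma}$. Hence $M(\tau)\dual_\sigma=M\dual_{\tau^{-1}\sigma}$ for every $\sigma\in G$. On the other hand, by the definition of the shift of a graded right module one has $\bigl((\tau^{-1})M\dual\bigr)_\sigma=M\dual_{\tau^{-1}\sigma}$. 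Comparing, $M(\tau)\dual_\sigma=\bigl((\tau^{-1})M\dual\bigr)_\sigma$ for all $\sigma$, and together with the identification of the underlying right $R$-modules from the previous step this yields $M(\tau)\dual=(\tau^{-1})M\dual$ in $gr-R$.

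The only point requiring a little care is to keep the two shift conventions straight --- $M(\sigma)_g=M_{g\sigma}$ for left modules versus $(\sigma)M_g=M_{\sigma g}$ for right modules --- and, relatedly, to track on which side the elements of $R$ act; but there is no genuine obstacle.
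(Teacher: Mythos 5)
Your proof is correct and is essentially identical to the paper's: both unwind the definition of the degree-$\sigma$ component of ${\rm HOM}_R(M,R)$, substitute $h=g\tau$, and match the result against the definition of the right shift $(\tau^{-1})M\dual$. You add the (harmless, slightly more careful) preliminary remark that the underlying right $R$-modules agree, which the paper leaves implicit.
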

\begin{proof}
Recall that the homogeneous component of degree $\sigma$ of $M\dual$
consists of all morphisms of $R$-modules $f:M\ra R$ such that
$f(M_g)\subset R_{g\sigma}$ for any $g\in G$. Then the homogeneous
component of degree $\sigma$ of $M(\tau)\dual$ consists of all
morphisms of $R$-modules $f:M\ra R$ such that $f(M_{g\tau})\subset
R_{g\sigma}$ for any $g\in G$, and this is equivalent to
$f(M_h)\subset R_{h\tau^{-1}\sigma}$ for any $h\in G$, which means
that $f\in (M\dual)_{\tau^{-1}\sigma}=((\tau^{-1}M\dual)_\sigma$. We
conclude that $M(\tau)\dual= (\tau^{-1})M\dual$.
\end{proof}

The proof of the following result works as in the un-graded case,
see \cite[Theorem 15.11, 15.12, and Corollary 15.13]{lam2}.

\begin{proposition}
Let $R$ be a graded quasi-Frobenius ring. Then the following
assertions hold.\\
$\rm (1)$ $\varphi_M$ is an isomorphism for any finitely generated
graded left $R$-module $M$.\\
$\rm (2)$ A graded left $R$ -module $M$ is finitely generated if and
only if the graded right $R$-module $M\dual$ is finitely
generated.\\
$\rm (3)$ The functor associating $M\dual$ to a graded left module
$M$ is a duality between the category of finitely generated graded
left $R$-modules and the category of finitely generated graded right
$R$-modules.\\
$\rm (4)$ If $M$ is a graded left $R$-module, then $M$ is graded
simple if and only if $M\dual$ is a graded simple right $R$-module.
\end{proposition}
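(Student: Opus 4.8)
\medskip
\noindent\textbf{Proof proposal.}
The plan is to follow the classical ungraded argument (compare \cite[Theorems 15.11 and 15.12, and Corollary 15.13]{lam2}), the extra input being the bookkeeping of shifts through Lemma \ref{lemadualsuspensie}. I would prove $(1)$ first. The starting computation is that $R\dual\simeq R$ as graded right $R$-modules, via $f\mapsto f(1)$ (the degree-$\sigma$ part of $R\dual$ is carried onto $R_\sigma$, and the inverse sends $a$ to the left $R$-linear map $x\mapsto xa$); hence also $R\dual\dual\simeq R$, and under these identifications $\varphi_R$ is an isomorphism. Since $\varphi$ is a natural transformation compatible with finite direct sums and, via Lemma \ref{lemadualsuspensie} and its analogue for graded right modules, with shifts, $\varphi_F$ is an isomorphism for every finitely generated graded free module $F=\oplus_{1\le i\le n}R(\sigma_i)$. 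For an arbitrary finitely generated $M$, use that $R$ is graded left Noetherian (Theorem \ref{gradedQF}) to pick a presentation $F_1\ra F_0\ra M\ra 0$ with $F_0,F_1$ finitely generated graded free. The contravariant functor $(-)\dual={\rm HOM}_R(-,R)$ carries exact sequences of graded modules to exact sequences, since it is left exact and $R$ is graded injective (Theorem \ref{gradedBaer}, applied in the form that every homogeneous morphism from a graded submodule of a graded module into $R$ extends). Applying $(-)\dual\dual$ to the presentation and using naturality of $\varphi$, I obtain a commutative ladder with exact rows in which $\varphi_{F_1}$ and $\varphi_{F_0}$ are isomorphisms; the five lemma then gives that $\varphi_M$ is an isomorphism.

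For $(2)$: if $M$ is finitely generated, then a surjection $F_0\ra M$ with $F_0$ finitely generated graded free dualizes to an embedding $M\dual\hookrightarrow F_0\dual$, and as $F_0\dual$ is finitely generated graded free and $R$ is graded right Noetherian, $M\dual$ is finitely generated. For the converse, I would first record that $\varphi_M:M\ra M\dual\dual$ is injective for \emph{every} graded left $R$-module $M$: if $m\in M$ is nonzero, choose a nonzero homogeneous component $m_h$ of $m$; the nonzero submodule $Rm_h$ admits a nonzero graded morphism into $R$ (this was shown inside the proof of Theorem \ref{gradedQF}), which extends to a homogeneous $f\in M\dual$ by graded injectivity of $R$, and then $f(m_h)\ne 0$ since $f$ and $m_h$ are homogeneous and $f|_{Rm_h}\ne 0$, whence $f(m)\ne 0$ (its homogeneous components do not cancel). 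Now if $M\dual$ is finitely generated, the case already proved, applied on the right, shows $M\dual\dual$ is finitely generated, and then $M$, being a graded submodule of $M\dual\dual$ over the graded left Noetherian ring $R$, is finitely generated.

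Parts $(3)$ and $(4)$ are then formal. By $(2)$, $M\mapsto M\dual$ restricts to contravariant functors between the categories of finitely generated graded left and of finitely generated graded right $R$-modules, and by $(1)$ applied on both sides the maps $\varphi$ exhibit $(-)\dual\dual\simeq{\rm id}$; thus $(-)\dual$ is a duality, which gives $(3)$. Since this duality is exact on finitely generated graded modules, it induces an order-reversing bijection between the lattice of graded submodules of a finitely generated $M$ and that of $M\dual$, sending $N\subseteq M$ to $(M/N)\dual\subseteq M\dual$. A graded simple $M$ is finitely generated, $M\dual\ne 0$ (otherwise $M\simeq M\dual\dual=0$ by $(1)$), and its submodule lattice has exactly two elements, so the same holds for $M\dual$, i.e., $M\dual$ is graded simple; the converse follows by dualizing once more and using $M\simeq M\dual\dual$. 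The only step with any real content beyond shift-bookkeeping is the injectivity of $\varphi_M$ for non--finitely generated $M$ used in $(2)$, which is vacuous over a base field but here rests on combining graded injectivity of $R$ with the morphism-existence statement drawn from the proof of Theorem \ref{gradedQF}.
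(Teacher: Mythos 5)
Your proof is correct and carries out exactly the program the paper indicates by its reference to \cite[Theorems 15.11, 15.12 and Corollary 15.13]{lam2}: you use the graded Noetherian and graded self-injective properties from Theorem \ref{gradedQF}, Lemma \ref{lemadualsuspensie} for the shift bookkeeping, exactness of $(-)\dual$ from graded injectivity of the shifts $R(\sigma)$, a finite graded presentation and the five lemma for $(1)$, the cogenerator-type fact established inside the proof of Theorem \ref{gradedQF} for injectivity of $\varphi_M$ in $(2)$, and the order-reversing lattice bijection for $(4)$. This coincides with the paper's intended (unwritten) argument in both approach and substance.
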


We add the following characterization of graded quasi-Frobenius
algebras, whose proof goes word by word as in the un-graded case,
see \cite[Theorem 16.2]{lam2}, working with graded objects.

\begin{theorem}  \label{QFdualsimple}
Let $R$ be a graded Artinian ring. Then the following are
equivalent.\\
{\rm (i)} $R$ is graded quasi-Frobenius;\\
{\rm (ii)} The dual $S\dual$ of any graded simple left (right)
$R$-module $S$ is a graded simple right (left) $R$-module.\\
{\rm (iii)} The dual $S\dual$ of any graded simple left (right)
$R$-module $S$ is either 0 or a graded simple right (left)
$R$-module.
\end{theorem}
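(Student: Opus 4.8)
The plan is to mirror the ungraded argument of \cite[Theorem 16.2]{lam2}, carefully tracking shifts throughout. The implication ${\rm (i)}\Rightarrow{\rm (ii)}$ will be immediate from part $(4)$ of the preceding Proposition: if $R$ is graded quasi-Frobenius and $S$ is a graded simple left $R$-module then $S\dual$ is a graded simple right $R$-module, and, the graded quasi-Frobenius property being left--right symmetric (Remark \ref{definitiegradedQF}), the symmetric statement covers graded simple right modules as well. The implication ${\rm (ii)}\Rightarrow{\rm (iii)}$ is trivial, so the whole content lies in ${\rm (iii)}\Rightarrow{\rm (i)}$.

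For ${\rm (iii)}\Rightarrow{\rm (i)}$: since $R$ is graded Artinian it is graded Noetherian by Theorem \ref{HopkinsLevitzki}, hence by Theorem \ref{gradedQF} it suffices to verify the two double annihilator conditions, for graded left and for graded right ideals (equivalently, by the graded Baer criterion of Theorem \ref{gradedBaer}, that $R$ is graded left injective). I would fix a decomposition $R=Re_1\oplus\cdots\oplus Re_m=e_1R\oplus\cdots\oplus e_mR$ with $e_1,\dots,e_m$ a complete system of orthogonal primitive idempotents of $R_\varepsilon$, as in Section \ref{sectionprojectiveobjects}. The key local computation, run through the graded identifications ${\rm HOM}_R(Re_i,R)\simeq e_iR$ and ${\rm HOM}_R(e_iR,R)\simeq Re_i$ (legitimate since $Re_i$ and $e_iR$ are finitely generated) given by $f\mapsto f(e_i)$, is that $(Re_i/J^{\rm gr}(R)e_i)\dual\simeq e_iR\cap{\rm ann}_r(J^{\rm gr}(R))$ as graded right modules, and $(e_iR/e_iJ^{\rm gr}(R))\dual\simeq Re_i\cap{\rm ann}_{\ell}(J^{\rm gr}(R))$ as graded left modules; by Remark \ref{socluanulator} and its right-hand analogue these are precisely strips of ${\rm soc}^{\rm gr}_\ell(R)$ and of ${\rm soc}^{\rm gr}_r(R)$. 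Since, up to isoshift, every graded simple module occurs as such a top, feeding the hypothesis ${\rm (iii)}$ on both sides shows that each of these socle strips is $0$ or graded simple.

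With this in place I would follow, step by step, the proof of \cite[Theorem 16.2]{lam2} in the graded category: using Lemma \ref{scufundaresimple}, the essentiality of the one-sided graded socles over the graded Artinian ring $R$, and the bijection between the isoshift types of graded simple left and right $R/J^{\rm gr}(R)$-modules (Sections \ref{sectiongradedsimple} and \ref{sectionprojectiveobjects}), one shows that every isoshift type of graded simple module embeds into $R$ on each side, and, after reducing the double annihilator conditions to maximal graded one-sided ideals via Lemma \ref{scufundaresimple} and a length count, that ${\rm ann}_r({\rm ann}_{\ell}A)=A$ for every graded right ideal $A$ and ${\rm ann}_{\ell}({\rm ann}_r U)=U$ for every graded left ideal $U$. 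By Theorem \ref{gradedQF} this yields that $R$ is graded quasi-Frobenius.

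The step I expect to be the main obstacle is the bookkeeping of shifts in this last part. The ``graded simple top $\leftrightarrow$ graded simple socle'' pairing that in the ungraded setting is encoded by the Nakayama permutation is, in the graded world, defined only up to a shift; hence the matching of the left and right socle decompositions --- and with it the identity ${\rm soc}^{\rm gr}_\ell(R)={\rm soc}^{\rm gr}_r(R)$ that makes the annihilator computations close up --- must be carried out at the level of isoshift types, tracking the relevant twists through the shift-graded ${\rm HOM}$-spaces and Lemma \ref{lemadualsuspensie}. Once this matching is arranged correctly, the remaining steps of Lam's proof go through essentially verbatim.
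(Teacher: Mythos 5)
Your proposal is correct and takes essentially the same approach as the paper: the paper itself says the proof ``goes word by word as in the un-graded case, see \cite[Theorem 16.2]{lam2}, working with graded objects,'' and that is exactly what you do, with the extra (and helpful) explicit bookkeeping of the key identification $(Re_i/J^{\rm gr}(R)e_i)\dual\simeq e_iR\cap{\rm ann}_r(J^{\rm gr}(R))={\rm soc}^{\rm gr}(e_iR)$ and of the shifts. Your closing caution about tracking isoshift types through Lemma \ref{lemadualsuspensie} is precisely the only place the graded argument deviates from the verbatim ungraded one, so the two treatments agree.
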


The following result is a graded version of \cite[Theorem
16.4]{lam2}. Moreover, its proof is on the same line as in the
ungraded case, however some new aspects occur in the graded case.
Since these will play a key role in defining graded Frobenius
rings, we sketch the proof and emphasize the parts specific to the
graded situation.

\begin{theorem} \label{QFsocleindec}
A graded Artinian ring $R$ is graded quasi-Frobenius if and only
if the following two conditions are satisfied:\\
{\rm (i)} Any graded simple left (right) $R$-module embeds up to a
shift into $R$.\\
{\rm (ii)} Any principal graded indecomposable left (right)
$R$-module has just one graded simple submodule.
\end{theorem}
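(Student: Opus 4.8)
The plan is to prove the two implications separately, relying heavily on Theorem~\ref{QFdualsimple} and the structure results for projective objects from Section~\ref{sectionprojectiveobjects}. For the forward direction, suppose $R$ is graded quasi-Frobenius. Condition~(i) is already recorded in the bullet list at the end of Section~\ref{sectionQF}: every isoshift type of graded simple module, left or right, occurs inside $R$. For condition~(ii), let $P$ be a principal graded indecomposable left $R$-module, say $P=Re$ for a primitive idempotent $e\in R_\varepsilon$. I would show ${\rm soc}^{\rm gr}(P)$ is graded simple. First, ${\rm soc}^{\rm gr}(P)\neq 0$ since $R$ is graded left Artinian. The key is the duality functor $(-)\dual$ from the previous Proposition: applying it to $P=Re$ gives $P\dual\cong eR$, a principal graded indecomposable right $R$-module, and the duality sends the inclusion ${\rm soc}^{\rm gr}(P)\hookrightarrow P$ to a surjection $P\dual\twoheadrightarrow ({\rm soc}^{\rm gr}(P))\dual$. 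Dually, the top $P\dual/J^{\rm gr}(R)P\dual$ of $P\dual$ is graded simple because $P\dual$ is indecomposable (its graded endomorphism ring is local, by the arguments in Proposition~\ref{isotypessimples}). Since a duality interchanges socle and top, ${\rm soc}^{\rm gr}(P)$, being dual to the top of $P\dual$, must be graded simple. Making this socle/top swap precise — verifying that $(-)\dual$ takes $J^{\rm gr}(R)P$ to the submodule of $P\dual$ annihilated by $J^{\rm gr}(R)$, and vice versa — is the part I expect to require the most care.

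For the converse, assume $R$ is graded Artinian and satisfies (i) and (ii); I want to deduce graded quasi-Frobenius via Theorem~\ref{QFdualsimple}(iii), i.e.\ that $S\dual$ is $0$ or graded simple for every graded simple left $R$-module $S$. Fix such an $S$. By (i) there is $g\in G$ with $S(g)\hookrightarrow R$; write the image as a minimal graded left ideal contained in some $Re_i$ from the decomposition $R=Re_1\oplus\cdots\oplus Re_n$, so $S(g)\hookrightarrow Re_i=P_i$. Then ${\rm HOM}_R(P_i,R)=e_iR$ and, applying ${\rm HOM}_R(-,R)$ to the short exact sequence $0\to S(g)\to P_i\to P_i/S(g)\to 0$, left-exactness gives that $S(g)\dual$ is a quotient of $P_i\dual=e_iR$. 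By (ii), $P_i$ has a unique graded simple submodule, namely $S(g)$ itself; I claim this forces $P_i/S(g)$ to have no homomorphism into $R$ whose image meets $S(g)\dual$ nontrivially, so that in fact $S(g)\dual$ is exactly the top of $e_iR$. More directly: a nonzero map $f\colon P_i\to R$ has kernel a proper graded submodule of $P_i$; if $\ker f\neq 0$ it contains the unique graded simple submodule $S(g)$, so $f$ factors through $P_i/S(g)$, whence the restriction $f|_{S(g)}=0$; thus the restriction map $e_iR={\rm HOM}_R(P_i,R)\to {\rm HOM}_R(S(g),R)=S(g)\dual$ has kernel equal to $\{f : \ker f\neq 0\}$, which together with the observation that the nonzero-kernel maps form a submodule (they are exactly those vanishing on $S(g)$) identifies $S(g)\dual$ with a simple quotient of $e_iR$ — or $S(g)\dual=0$. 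Finally I translate back via Lemma~\ref{lemadualsuspensie}: $S\dual = S(g)(g^{-1})\dual = (g)\,(S(g)\dual)$, a shift of something that is $0$ or graded simple, hence itself $0$ or graded simple. By Theorem~\ref{QFdualsimple}, $R$ is graded quasi-Frobenius, which completes the proof.

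The main obstacle in both directions is the bookkeeping with shifts: the duality $(-)\dual$ does not fix the grading but twists it (Lemma~\ref{lemadualsuspensie}), so every statement about "$S\dual$ is simple" or "socle dual to top" must be interpreted up to the appropriate shift, and one has to be consistent about which embedding $S(g)\hookrightarrow R$ is chosen and how the shift propagates through ${\rm HOM}_R(-,R)$. The un-graded proof in \cite[Theorem~16.4]{lam2} has none of this, so the content of the graded version is precisely making sure these twists cancel correctly; once that is set up, the homological skeleton of the argument is unchanged.
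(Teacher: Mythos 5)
Your converse direction has a genuine gap. You assert that ``left-exactness gives that $S(g)\dual$ is a quotient of $P_i\dual=e_iR$,'' but this is not what left-exactness says: applying the contravariant functor ${\rm HOM}_R(-,R)$ to $0\to S(g)\to P_i\to P_i/S(g)\to 0$ yields only $0\to (P_i/S(g))\dual\to P_i\dual\to S(g)\dual$, and the last arrow need not be surjective, because $R$ is not known to be graded injective --- that is precisely what you are trying to prove, so a graded morphism $S(g)\to R$ cannot be assumed to extend to $P_i$. Your ``more directly'' reformulation correctly identifies the kernel $K$ of the restriction map $e_iR\to S(g)\dual$ as $\{f: f|_{S(g)}=0\}=\{f:\ker f\neq 0\}$, but it neither shows that $K$ is a maximal graded submodule of $e_iR$ (so that the image is simple) nor that the restriction map is onto $S(g)\dual$. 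The surjectivity is exactly where the work lies: the paper first establishes ${\rm soc}_\ell^{\rm gr}(R)={\rm soc}_r^{\rm gr}(R)$ (needed to see $K\supseteq eJ^{\rm gr}(R)$), chooses an isomorphism $S\simeq (Re'/J^{\rm gr}(R)e')(\sigma)$ with generator $\gamma(\overline{e'})$, shows both $\gamma(\overline{e'})$ and $u(\gamma(\overline{e'}))$ lie in the simple socle ${\rm soc}^{\rm gr}(e'R)$ (this is where condition (ii) on the right side is used), deduces $u(\gamma(\overline{e'}))=\gamma(\overline{e'})r$ for a homogeneous $r$, and concludes $u=\overline\varphi(\overline{er})$. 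Without this or an equivalent mechanism, the key step of Theorem~\ref{QFdualsimple}(iii) is not verified.

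For the forward direction, your duality route can be made to work, but it is indirect and carries the socle/top-interchange gap you yourself flag. A cleaner argument: if $R$ is graded quasi-Frobenius, each $P=Re$ is graded injective (as a graded direct summand of the graded injective $R$) and of finite graded length; an indecomposable graded injective object of finite length has graded simple socle, since two independent graded simple submodules $S_1,S_2$ would force $E^{\rm gr}(S_1)$ to be a proper nonzero graded direct summand of $P$. This gives (ii) directly, with no duality machinery and no socle/top bookkeeping.
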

\begin{proof}
Assume that conditions (i) and (ii) are satisfied. As in the
ungraded case, one shows that ${\rm soc}_\ell^{\rm gr}(R)={\rm
soc}_r^{\rm gr}(R)$.

Now we show that $S^\wedge$ is either 0 or a graded simple right
$R$-module for any graded simple left $R$-module $S$. By Lemma
\ref{lemadualsuspensie}, this is equivalent to proving it for a
shift of $S$, so by (i), we may assume that $S$ embeds into $R$.
Thus $S={\rm soc}^{\rm gr}(P)$ for some principal graded
indecomposable left $R$-module $P$. Write $P=Re$ for a homogeneous
idempotent $e$ of trivial degree. Then the map
$$\varphi :eR\ra S^\wedge , \varphi (er)(x)=xer \;\;\; \mbox{ for
any } r\in R, x\in S,$$ is a morphism of right graded $R$-modules.
Moreover, $$\varphi (eJ^{\rm gr}(R))=SeJ^{\rm gr}(R)\subseteq S
J^{\rm gr}(R)\subseteq {\rm soc}_\ell^{\rm gr}(R)J^{\rm
gr}(R)={\rm soc}_r^{\rm gr}(R)J^{\rm gr}(R)=0,$$ so $\varphi$
induces a morphism $\overline{\varphi}:eR/eJ^{\rm gr}(R)\ra
S^\wedge$.

Since $S$ is graded simple, there is an isomorphism  $\gamma:
(Re'/J^{\rm gr}(R)e')(\sigma)\ra S$ of graded left $R$-modules for
some homogeneous idempotent $e'$ of trivial degree such that $Re'$
is a principal graded indecomposable left $R$-module. As
$\overline{e'}$ (the class is modulo $J^{\rm gr}(R)e'$) has degree
$\sigma^{-1}$ in $(Re'/J^{\rm gr}(R)e')(\sigma)$, we see that
$\gamma(\overline{e'})\in S_{\sigma^{-1}}$. Clearly,
$e'\gamma(\overline{e'})=\gamma(e'\overline{e'})=\gamma(\overline{e'})$,
so then $\gamma(\overline{e'})\in e'R\cap S\subseteq e'R\cap {\rm
soc}_\ell^{\rm gr}(R)=e'R\cap {\rm soc}_r^{\rm gr}(R)={\rm
soc}^{\rm gr}(e'R)$.

Now let $u$ be a nonzero homogeneous element of $S^\wedge$. Then
$u(S)$ is isomorphic to a shift of $S$, thus it is graded simple,
and working as above for $\gamma(\overline{e'})$, we get that
$u(\gamma(\overline{e'}))\in e'R\cap u(S)\subseteq {\rm soc}^{\rm
gr}(e'R)$. Since ${\rm soc}^{\rm gr}(e'R)$ is simple, we must have
$\gamma(\overline{e'})R=u(\gamma(\overline{e'}))R={\rm soc}^{\rm
gr}(e'R)$, so $u(\gamma(\overline{e'}))=\gamma(\overline{e'})r$
for some homogeneous $r\in R$. Then for any $x\in R$ \bea
u(x\gamma(\overline{e'}))&=&x\gamma(\overline{e'})r\\
&=& x\gamma(\overline{e'})er \;\;\;\;\; (\mbox{since }
\gamma(\overline{e'})\in S\subseteq Re)\\
&=&\overline{\varphi}(\overline{er})(x\gamma(\overline{e'})),\eea
which shows that $u=\overline{\varphi}(\overline{er})$. We
conclude that $\overline{\varphi}$ is surjective. Since
$eR/eJ^{\rm gr}(R)$ is simple, this shows that $S^\wedge$ is
either 0 or isomorphic to $eR/eJ^{\rm gr}(R)$, thus simple. Now
$R$ is graded quasi-Frobenius by Theorem \ref{QFdualsimple}.
\end{proof}

A consequence of the proof of the previous theorem is the
following.

\begin{corollary} \label{prenakayamapermutation}
Let $R$ be a graded quasi-Frobenius ring. Let $e$ and $e'$ be two
primitive idempotents in $R_1$ such that ${\rm soc}^{\rm
gr}(Re)\simeq (Re'/J^{\rm gr}(R)e')(\sigma)$ as graded left
$R$-modules for some $\sigma\in G$. Then ${\rm soc}^{\rm
gr}(e'R)\simeq (\sigma) (eR/eJ^{\rm gr}(R))$ as graded right
$R$-modules.
\end{corollary}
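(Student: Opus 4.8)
The plan is to apply the duality $M\mapsto M\dual$ between finitely generated graded left and graded right $R$-modules, available because $R$ is graded quasi-Frobenius, together with Lemma \ref{lemadualsuspensie} describing how $\dual$ interacts with shifts. The two ingredients I would extract from the proof of Theorem \ref{QFsocleindec} are: for a primitive idempotent $e$ of trivial degree, the dual of the graded simple module ${\rm soc}^{\rm gr}(Re)$ is the graded simple top $eR/eJ^{\rm gr}(R)$, and, symmetrically, the dual of the graded simple top $Re'/J^{\rm gr}(R)e'$ is the graded simple module ${\rm soc}^{\rm gr}(e'R)$.

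First I would record these two identifications as isomorphisms of graded modules with no shift involved, starting from the standard degree-preserving isomorphism $(Re)\dual\cong eR$ of graded right $R$-modules given by $f\mapsto f(e)$ (and likewise for $e'$). Dualizing the inclusion $0\to{\rm soc}^{\rm gr}(Re)\to Re$, exactness of $\dual$ yields a surjection $eR\cong(Re)\dual\twoheadrightarrow\bigl({\rm soc}^{\rm gr}(Re)\bigr)\dual$ of graded right modules; the target is graded simple by Theorem \ref{QFdualsimple}, and since $eJ^{\rm gr}(R)$ is the unique maximal graded submodule of $eR$, the only graded simple quotient of $eR$ is $eR/eJ^{\rm gr}(R)$, so $\bigl({\rm soc}^{\rm gr}(Re)\bigr)\dual\simeq eR/eJ^{\rm gr}(R)$ as graded right modules. (This is exactly the isomorphism $\overline{\varphi}$ produced in the proof of Theorem \ref{QFsocleindec}.) Likewise, dualizing $0\to J^{\rm gr}(R)e'\to Re'\to Re'/J^{\rm gr}(R)e'\to 0$ gives an embedding $\bigl(Re'/J^{\rm gr}(R)e'\bigr)\dual\hookrightarrow(Re')\dual\cong e'R$ whose image is graded simple, hence a graded simple submodule of the principal graded indecomposable right module $e'R$; since $e'R$ has a unique graded simple submodule, namely ${\rm soc}^{\rm gr}(e'R)$, we get $\bigl(Re'/J^{\rm gr}(R)e'\bigr)\dual\simeq{\rm soc}^{\rm gr}(e'R)$ as graded right modules.

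With these in hand, I would dualize the hypothesis ${\rm soc}^{\rm gr}(Re)\simeq\bigl(Re'/J^{\rm gr}(R)e'\bigr)(\sigma)$. Using Lemma \ref{lemadualsuspensie} and the two identifications,
\bea
eR/eJ^{\rm gr}(R)&\simeq&\bigl({\rm soc}^{\rm gr}(Re)\bigr)\dual\\
&\simeq&\bigl((Re'/J^{\rm gr}(R)e')(\sigma)\bigr)\dual\\
&=&(\sigma^{-1})\bigl(Re'/J^{\rm gr}(R)e'\bigr)\dual\\
&\simeq&(\sigma^{-1}){\rm soc}^{\rm gr}(e'R)
\eea
as graded right $R$-modules. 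Applying the shift $(\sigma)$ to both ends and using $(\sigma)\bigl((\sigma^{-1})N\bigr)=N$ for every graded right module $N$, this becomes ${\rm soc}^{\rm gr}(e'R)\simeq(\sigma)\bigl(eR/eJ^{\rm gr}(R)\bigr)$, which is the assertion.

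The step I expect to be the main obstacle is the shift bookkeeping. All intermediate isomorphisms must be kept degree-preserving — in particular the identifications $(Re)\dual\cong eR$ and $(Re')\dual\cong e'R$, and the map $\overline{\varphi}$ taken from the proof of Theorem \ref{QFsocleindec} — and the left-versus-right shift conventions of Lemma \ref{lemadualsuspensie} and of the right-module shift must be composed in the correct order; a single misplaced shift would turn $\sigma$ into $\sigma^{-1}$ in the conclusion.
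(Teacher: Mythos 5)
Your proof is correct and follows essentially the same route as the paper's: both use the two identifications $\bigl({\rm soc}^{\rm gr}(Re)\bigr)\dual\simeq eR/eJ^{\rm gr}(R)$ and ${\rm soc}^{\rm gr}(e'R)\simeq\bigl(Re'/J^{\rm gr}(R)e'\bigr)\dual$ together with Lemma~\ref{lemadualsuspensie} and the biduality isomorphism; your extra step of re-deriving the two identifications from the exact sequences and the uniqueness of the top/socle of a principal graded indecomposable is a correct unpacking of what the paper cites from the proof of Theorem~\ref{QFsocleindec}, not a different argument.
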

\begin{proof}
Denote by $S={\rm soc}^{\rm gr}(Re)$, thus $S\simeq (Re'/J^{\rm
gr}(R)e')(\sigma)$. We have seen in the proof of Theorem
\ref{QFsocleindec} that $S^\wedge \simeq eR/eJ^{\rm gr}(R)$. Thus
$({\rm soc}^{\rm gr}(Re))^\wedge \simeq eR/eJ^{\rm gr}(R)$. Taking
the duals, we get ${\rm soc}^{\rm gr}(Re)\simeq (eR/eJ^{\rm
gr}(R))^\wedge$. Proceeding in a similar way with $e'$ to the right,
we see that ${\rm soc}^{\rm gr}(e'R)\simeq (Re'/J^{\rm
gr}(R)e')^\wedge$. Using Lemma \ref{lemadualsuspensie}, we obtain
that
$${\rm soc}^{\rm gr}(e'R)\simeq (S(\sigma^{-1}))^\wedge\simeq (\sigma)S^\wedge \simeq (\sigma) (eR/eJ^{\rm gr}(R))$$
\end{proof}

\begin{corollary} \label{nakayamapermutation}
Let $R$ be a graded Artinian ring, and let $Re_1,\ldots , Re_t$ be
a system of representatives for the isoshift types of principal
graded indecomposable left $R$-modules. Let $S_i=Re_i/J^{\rm
gr}(R)e_i$ and $S'_i=e_iR/e_iJ^{\rm gr}(R)$ for any
$1\leq i \leq t$. Then the following are equivalent.\\
{\rm (1)} R is graded quasi-Frobenius.\\
{\rm (2)} There exist a permutation $\pi \in S(\{ 1,\ldots,t\})$
and some $\sigma_1,\ldots ,\sigma_t\in G$ such that ${\rm
soc}^{\rm gr}(Re_i)\simeq S_{\pi (i)}(\sigma_i)$ and ${\rm
soc}^{\rm gr}(e_{\pi (i)}R)\simeq (\sigma_i)S'_i$ for any $1\leq
i\leq t$.
\end{corollary}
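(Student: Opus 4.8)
The plan is to derive both implications from Theorem~\ref{QFsocleindec} (graded quasi-Frobenius $\Leftrightarrow$ every graded simple one-sided module embeds up to a shift into $R$, and every principal graded indecomposable one-sided module has graded simple socle) together with Corollary~\ref{prenakayamapermutation}, which is exactly the device pairing the left and the right socle data. One preliminary remark is used throughout: by Proposition~\ref{isotypessimples} and its right-hand analogue, $S_1,\ldots,S_t$ (respectively $S'_1,\ldots,S'_t$) is a system of representatives for the isoshift types of graded simple left (respectively right) $R$-modules, and its members are pairwise non-isoshift-equivalent; here one uses that $Re_i\not\sim Re_j$ forces $e_iR\not\sim e_jR$ (the remark closing Section~\ref{sectionprojectiveobjects}), so that $e_1R,\ldots,e_tR$ is in turn a system of representatives for the isoshift types of principal graded indecomposable right $R$-modules.

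For (1)$\Rightarrow$(2): assuming $R$ graded quasi-Frobenius, Theorem~\ref{QFsocleindec}(ii) gives that $\mathrm{soc}^{\mathrm{gr}}(Re_i)$ is graded simple for each $i$, so I fix $\pi(i)\in\{1,\ldots,t\}$ and $\sigma_i\in G$ with $\mathrm{soc}^{\mathrm{gr}}(Re_i)\simeq S_{\pi(i)}(\sigma_i)$ (the index $\pi(i)$ is then forced, $\sigma_i$ is one possible choice). Next I show $\pi$ is injective, hence a permutation: if $\pi(i)=\pi(j)=k$, applying Corollary~\ref{prenakayamapermutation} to the pairs $(e_i,e_k)$ and $(e_j,e_k)$ yields $(\sigma_i)S'_i\simeq\mathrm{soc}^{\mathrm{gr}}(e_kR)\simeq(\sigma_j)S'_j$, the middle module being graded simple by Theorem~\ref{QFsocleindec}(ii) applied on the right; hence $S'_i\sim S'_j$, so $i=j$. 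Finally, writing $\mathrm{soc}^{\mathrm{gr}}(Re_i)\simeq S_{\pi(i)}(\sigma_i)=(Re_{\pi(i)}/J^{\mathrm{gr}}(R)e_{\pi(i)})(\sigma_i)$ and feeding this into Corollary~\ref{prenakayamapermutation} once more gives $\mathrm{soc}^{\mathrm{gr}}(e_{\pi(i)}R)\simeq(\sigma_i)(e_iR/e_iJ^{\mathrm{gr}}(R))=(\sigma_i)S'_i$, which is the second required isomorphism.

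For (2)$\Rightarrow$(1): since $R$ is graded Artinian by hypothesis, it suffices to verify conditions (i) and (ii) of Theorem~\ref{QFsocleindec}. Condition (ii): the modules $\mathrm{soc}^{\mathrm{gr}}(Re_i)\simeq S_{\pi(i)}(\sigma_i)$ and $\mathrm{soc}^{\mathrm{gr}}(e_{\pi(i)}R)\simeq(\sigma_i)S'_i$ are graded simple, and every principal graded indecomposable left module is a shift of some $Re_i$ while --- $\pi$ being a permutation --- every principal graded indecomposable right module is a shift of some $e_{\pi(i)}R$, so all of them have graded simple socle. Condition (i): as $\pi$ is a permutation, $\pi(i)$ ranges over all of $\{1,\ldots,t\}$, so each $S_j$ is a shift of $\mathrm{soc}^{\mathrm{gr}}(Re_{\pi^{-1}(j)})\subseteq R$ and each $S'_i$ is a shift of $\mathrm{soc}^{\mathrm{gr}}(e_{\pi(i)}R)\subseteq R$; by the preliminary remark these exhaust the isoshift types of graded simple left, respectively right, $R$-modules, so every graded simple one-sided module embeds up to a shift into $R$. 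Theorem~\ref{QFsocleindec} then yields that $R$ is graded quasi-Frobenius.

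The step I expect to be the main obstacle is the injectivity of $\pi$ in (1)$\Rightarrow$(2): this is the one genuinely graded point, where one cannot argue on the left alone but must push the shifted left socle data to the right through Corollary~\ref{prenakayamapermutation}, observe that a shared right socle forces an isoshift coincidence among the $S'_i$, and keep track of the shifts $\sigma_i$ at each step. The remainder is careful but routine bookkeeping with shifts and direct appeals to the results established earlier.
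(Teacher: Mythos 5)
Your proof is correct, and its overall architecture — establishing both implications via Theorem~\ref{QFsocleindec} and Corollary~\ref{prenakayamapermutation}, with the injectivity of $\pi$ as the single genuinely delicate step — matches the paper's. The one place you take a different route is precisely that injectivity step. The paper argues on the left side only: since $R$ is graded quasi-Frobenius, each $Re_i$ is graded injective, and being the graded injective envelope of its essential socle $\mathrm{soc}^{\mathrm{gr}}(Re_i)\simeq S_{\pi(i)}(\sigma_i)$, an equality $\pi(i)=\pi(j)$ forces $Re_j\simeq E^{\mathrm{gr}}(S_{\pi(i)})(\sigma_j)\simeq Re_i(\sigma_j\sigma_i^{-1})$, so $Re_i\sim Re_j$ and $i=j$. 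You instead push the data to the right: both applications of Corollary~\ref{prenakayamapermutation} compute the same socle $\mathrm{soc}^{\mathrm{gr}}(e_kR)$, giving $(\sigma_i)S'_i\simeq(\sigma_j)S'_j$, hence $S'_i\sim S'_j$, hence $e_iR\sim e_jR$, hence (via the idempotent remark) $Re_i\sim Re_j$. Both arguments are valid and reach the same conclusion; the paper's is marginally more self-contained (it only uses injectivity of the $Re_i$, which is already built into graded quasi-Frobenius), while yours exploits the left-right pairing of Corollary~\ref{prenakayamapermutation} a second time and is arguably closer in spirit to how the Nakayama permutation is actually being set up. The (2)$\Rightarrow$(1) direction and the bookkeeping with shifts are the same in both treatments.
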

\begin{proof}
(1)$\Rightarrow$(2) If $R$ is graded quasi-Frobenius, then we know
by Corollary \ref{prenakayamapermutation} that there is a map
$\pi:\{ 1,\ldots , t\}\ra \{ 1,\ldots , t\}$ such that
$${\rm soc}^{\rm
gr}(Re_i)\simeq  (Re_{\pi (i)}/J^{\rm gr}(R)e_{\pi (i)})(\sigma_i)
\mbox{  and  }{\rm soc}^{\rm gr}(e_{\pi (i)}R)\simeq (\sigma)
(e_iR/e_iJ^{\rm gr}(R)).$$ We show that $\pi$ is injective,  thus
also bijective. Indeed, if $\pi (i)=\pi(j)$ for some $i,j$, then
since $Re_i$ is the injective envelope of $ {\rm soc}^{\rm
gr}(Re_i)$, we have $Re_i\simeq E^{\rm gr}(Re_{\pi (i)}/J^{\rm
gr}(R)e_{\pi (i)})(\sigma_i)$. Similarly \bea Re_j&\simeq&
E(Re_{\pi
(j)}/J^{\rm gr}(R)e_{\pi (j)})(\sigma_j)\\
&\simeq& E(Re_{\pi (i)}/J^{\rm gr}(R)e_{\pi
(i)})(\sigma_j)\\
&\simeq& (Re_i)(\sigma_j\sigma_i^{-1})\eea so $Re_i$ and $Re_j$ have
the same isoshift type, i.e., $i=j$.

(2)$\Rightarrow$(1) Since $\pi$ is bijective, we see that any left
(right) graded simple module embeds into a principal graded indecomposable left
(right) $R$-module, thus also into $R$. Moreover, the socle of any principal
graded indecomposable is graded simple by (2). We get that $R$ is graded quasi-Frobenius by Theorem \ref{QFsocleindec}.
\end{proof}

We conclude  this section by summarizing that to a graded
quasi-Frobenius ring $R$ we associate\\

$\bullet$ a positive integer $t$, the number if isoshift types of
graded principal indecomposable left $R$-module; choose some
system of representatives $P_1,\ldots,P_t$ for these isoshift
types, such that $P_1,\ldots,P_t$ embed into $R$. If
$S_i=P_i/J^{\rm gr}(R)P_i$ for $1\leq i\leq t$, we know that
$S_1,\ldots,S_t$ are the isoshift types of graded simple left
$R$-modules.

$\bullet$ some positive integers $n_1,\ldots,n_t$, indicating the
multiplicities of the isoshift types $P_1,\ldots,P_t$ in a
decomposition of $R$.

$\bullet$ a set $g_{i1},\ldots,g_{in_i}$ of elements of $G$, such
that the isoshift component of type $P_i$ of $R$ (in a decomposition
into a direct sum of graded indecomposable left modules) is
$P_i(g_{i1})\oplus \ldots \oplus P_i(g_{in_i})$.

$\bullet$ a permutation $\pi \in S(\{ 1,\ldots ,t\})$, called the
Nakayama permutation, and some elements $\sigma_1,\ldots,\sigma_t\in
G$ such that ${\rm soc}(P_i)\simeq
S_{\pi(i)}(\sigma_i)$ for any $1\leq i\leq t$.\\

Thus we associated a set of data $(t, (n_i)_{1\leq i\leq t},
(g_{ij})_{1\leq i\leq t\atop 1\leq j\leq n_i}, \pi,
(\sigma_i)_{1\leq i\leq t})$ to the graded quasi-Frobenius ring
$R$. Clearly, this set of data depends on the choices we make: the
order of the isoshift types and the choice of each $P_i$.

By the discussion at the end of Section
\ref{sectionprojectiveobjects}, we see that if we consider
$P_1=Re_1,\ldots,P_t=Re_t$ for some idempotents $e_1,\ldots,e_t\in
R_{\varepsilon}$, then $P'_1=e_1R,\ldots,P'_t=e_tR$ is a system of
representatives for the isoshift types of graded principal
indecomposable right $R$-modules, and $S'_1=P'_1/P'_1J^{\rm
gr}(R), \ldots, S'_t=P'_1/P'_tJ^{\rm gr}(R)$ is a system of
representatives for the isoshift types of graded simple right
$R$-modules. Moreover, the isoshift component of type $P'_i$ of
$R$ (as a graded right $R$-module) is $(g_{i1}^{-1})P'_i\oplus
\ldots \oplus (g_{in_i}^{-1})P'_i$ for each $i$.

If $M$ is a graded left module over a $G$-graded ring $R$, we
denote by $\Sigma (M)=\{ g\in G| M(g)\simeq M\}$, which is a
subgroup of $G$, called the inertia group of $M$. Similarly, the
inertia group of a graded right module $M$ consists of all $g$
such that $(g)M\simeq M$.

\begin{lemma} \label{calculSigmaSi}
Let $R$ be a graded quasi-Frobenius ring. With notations as above,
we have:\\
{\rm (1)} $\Sigma(S_i)=\Sigma(P_i)=\Sigma(P'_i)=\Sigma(S'_i)$ for
any
$1\leq i\leq t$;\\
{\rm (2)} $\Sigma(S_i)=\sigma_i \Sigma(S_{\pi(i)})\sigma_i^{-1}$
for any $1\leq i\leq t$.
\end{lemma}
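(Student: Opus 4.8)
The plan is to establish part (1) first, by exploiting that the correspondences $P\mapsto P/J^{\rm gr}(R)P$ and $Re\leftrightarrow eR$ all preserve shifts in a controlled way. For the equality $\Sigma(P_i)=\Sigma(S_i)$, I would invoke Corollary \ref{corisoproj}: since $P_i$ is graded projective, $P_i(g)\simeq P_i$ in $R-gr$ if and only if $(P_i/J^{\rm gr}(R)P_i)(g)\simeq P_i/J^{\rm gr}(R)P_i$ in $R/J^{\rm gr}(R)-gr$, i.e., $S_i(g)\simeq S_i$; here one uses that $(P_i(g))/J^{\rm gr}(R)(P_i(g))\simeq (P_i/J^{\rm gr}(R)P_i)(g)$, which is immediate from the definition of the shift. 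This gives $\Sigma(P_i)=\Sigma(S_i)$, and symmetrically $\Sigma(P'_i)=\Sigma(S'_i)$ on the right. To connect left and right, I would use the fact recalled at the end of Section \ref{sectionprojectiveobjects} (from \cite[Lemma 1.2]{vas}): for idempotents $e,f\in R_\varepsilon$ and $\sigma\in G$, $Re\simeq Rf(\sigma)$ as graded left modules iff $eR\simeq (\sigma^{-1})(fR)$ as graded right modules. Applying this with $f=e$, $Re_i\simeq Re_i(\sigma)$ iff $e_iR\simeq (\sigma^{-1})(e_iR)$, so $\sigma\in\Sigma(P_i)$ iff $\sigma^{-1}\in\Sigma(P'_i)$; since inertia groups are subgroups, this yields $\Sigma(P_i)=\Sigma(P'_i)$, and (1) follows by chaining the four equalities.

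For part (2), the plan is to use the Nakayama relation together with Lemma \ref{lemadualsuspensie}. By Corollary \ref{nakayamapermutation}(2) we have ${\rm soc}^{\rm gr}(P_i)\simeq S_{\pi(i)}(\sigma_i)$, and from the proof of Theorem \ref{QFsocleindec} (see Corollary \ref{prenakayamapermutation}) we know $S_i^\wedge\simeq S'_i$ and more precisely ${\rm soc}^{\rm gr}(P_i)\simeq (S'_i)^\wedge$. The cleanest route is: $\Sigma$ of a shift-related module is conjugate-shifted, namely if $M\simeq N(\sigma)$ then $g\in\Sigma(M)$ iff $M(g)\simeq M$ iff $N(\sigma g)\simeq N(\sigma)$ iff $N(\sigma g\sigma^{-1})\simeq N$ (replacing the shift variable $\sigma g$ by $\sigma g\sigma^{-1}\cdot\sigma$ and using associativity of shifts), i.e., $\sigma g\sigma^{-1}\in\Sigma(N)$; hence $\Sigma(M)=\sigma^{-1}\Sigma(N)\sigma$. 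Applying this to $M=S_i$ (up to isoshift, $S_i\sim{\rm soc}^{\rm gr}(P_i)$, and by part (1) we may replace $S_i$ by ${\rm soc}^{\rm gr}(P_i)$ since isoshift-equivalent modules have the same inertia group) and $N=S_{\pi(i)}$ with ${\rm soc}^{\rm gr}(P_i)\simeq S_{\pi(i)}(\sigma_i)$, we obtain $\Sigma(S_i)=\Sigma({\rm soc}^{\rm gr}(P_i))=\sigma_i^{-1}\Sigma(S_{\pi(i)})\sigma_i$. To match the stated form $\Sigma(S_i)=\sigma_i\Sigma(S_{\pi(i)})\sigma_i^{-1}$ one must keep careful track of the left-versus-right shift conventions ($M(\sigma)_g=M_{g\sigma}$ for left modules), which swaps the side on which $\sigma_i$ acts; I would do this bookkeeping explicitly once.

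The main obstacle I anticipate is precisely this conjugation bookkeeping: the interplay between the left-module shift convention $M(\sigma)_g = M_{g\sigma}$, the right-module convention $(\sigma)M_g=M_{\sigma g}$, and the duality identity $M(\tau)\dual=(\tau^{-1})M\dual$ of Lemma \ref{lemadualsuspensie} can easily introduce a spurious inversion or put the conjugating element on the wrong side. To manage this I would first prove the small auxiliary fact in clean generality — for $M\sim N$ via $M\simeq N(\sigma)$ one has $\Sigma(M)=\sigma^{-1}\Sigma(N)\sigma$ in $R-gr$ — verify the analogous right-module version, and only then substitute the specific modules; that isolates the sign-of-exponent issue to a single lemma and makes the final assembly mechanical.
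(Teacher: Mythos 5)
Part (1) of your proposal is correct and is exactly the paper's argument: Corollary \ref{corisoproj} gives $\Sigma(P_i)=\Sigma(S_i)$ and $\Sigma(P'_i)=\Sigma(S'_i)$, and the left-right switch $Re\simeq Re(\sigma)\Leftrightarrow eR\simeq(\sigma^{-1})eR$ from \cite[Lemma 1.2]{vas}, combined with the fact that inertia groups are subgroups, gives $\Sigma(P_i)=\Sigma(P'_i)$.

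For part (2) your high-level plan is the same as the paper's (compute the inertia group of a shift of a graded simple, and transport it along the isomorphism ${\rm soc}^{\rm gr}(P_i)\simeq S_{\pi(i)}(\sigma_i)$), but the details as written contain two genuine errors, not merely ``bookkeeping to be done once.'' First, with the paper's convention $M(\sigma)_g=M_{g\sigma}$ one has $M(\sigma)(g)=M(g\sigma)$, \emph{not} $M(\sigma g)$: indeed $(M(\sigma)(g))_h=(M(\sigma))_{hg}=M_{hg\sigma}=(M(g\sigma))_h$. Starting instead from $N(g\sigma)\simeq N(\sigma)$ and shifting by $\sigma^{-1}$ gives $N(\sigma^{-1}g\sigma)\simeq N$, hence $\Sigma(N(\sigma))=\sigma\Sigma(N)\sigma^{-1}$, which is the conjugation in the direction the lemma asserts; your derivation yields $\sigma^{-1}\Sigma(N)\sigma$, the wrong direction. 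Second, your bridging claim ``$S_i\sim{\rm soc}^{\rm gr}(P_i)$ and isoshift-equivalent modules have the same inertia group'' is false on both counts: ${\rm soc}^{\rm gr}(P_i)\simeq S_{\pi(i)}(\sigma_i)$ lies in the isoshift class of $S_{\pi(i)}$, not of $S_i$ (unless $\pi(i)=i$), and isoshift-equivalent modules have \emph{conjugate} inertia groups, not equal ones. The paper avoids both problems by observing directly that, since $P_i$ is graded injective, $P_i=E^{\rm gr}({\rm soc}^{\rm gr}(P_i))$, whence $P_i\simeq P_i(\sigma)$ if and only if ${\rm soc}^{\rm gr}(P_i)\simeq{\rm soc}^{\rm gr}(P_i)(\sigma)$; this gives $\Sigma(P_i)=\Sigma({\rm soc}^{\rm gr}(P_i))$ with no appeal to duality at all, and then part (1) supplies $\Sigma(S_i)=\Sigma(P_i)$. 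Your detour through $S^\wedge$ and Lemma \ref{lemadualsuspensie} is unnecessary and is a likely source of the sign confusion.
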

\begin{proof}
(1) If we apply Corollary \ref{corisoproj} for $P=P_i$ and
$Q=P_i(\sigma)$, where $\sigma\in G$, we see that
$P_i(\sigma)\simeq P_i$ if and only if $S_i(\sigma)\simeq S_i$.
This shows that $\Sigma(P_i)=\Sigma(S_i)$. Similarly
$\Sigma(P'_i)=\Sigma(S'_i)$.

Now write $P_i=Re$ and $P'_i=eR$ for some idempotent $e\in
R_\varepsilon$. If $\sigma\in G$, then $\sigma\in \Sigma(P_i)$ if
and only if $Re\simeq Re(\sigma)$ in $R-gr$. By \cite[Lemma
1.2]{vas}, this is equivalent to $eR\simeq (\sigma^{-1})eR$, which
means that $\sigma^{-1}\in \Sigma(P'_i)$. Thus the subgroups
$\Sigma(P_i)$ and $\Sigma(P'_i)$ of $G$ are equal.

(2) Let $S={\rm soc}^{\rm gr}(P_i)$. We know that $S\simeq
S_{\pi(i)}(\sigma_i)$. Since $P_i$ is graded injective, we have that
$P_i=E^{\rm gr}(S)$, the injective envelope of $S$ in $R-gr$. If
$\sigma \in G$ is such that $P_i\simeq P_i(\sigma)$, then ${\rm
soc}^{\rm gr}(P_i)\simeq {\rm soc}^{\rm gr}(P_i(\sigma))$, so
$S\simeq S(\sigma)$. Conversely, if $\sigma\in G$ is such that
$S\simeq S(\sigma)$, then $E^{\rm gr}(S)\simeq E^{\rm
gr}(S(\sigma))$, so $P_i\simeq P_i(\sigma)$. Thus
$\Sigma(P_i)=\Sigma(S)=\Sigma(S_{\pi(i)}(\sigma_i))=\sigma_i
\Sigma(S_{\pi(i)})\sigma_i^{-1}$, and the result follows since
$\Sigma(S_i)=\Sigma(P_i)$.
\end{proof}

\section{Graded Frobenius rings}  \label{sectionFrobenius}

We recall (see \cite[Theorem 16.14 and Corollary 16.16]{lam2})
that a two-sided Artinian ring $R$ is called Frobenius if it
satisfies one of the following equivalent conditions: (1) $R$ is
quasi-Frobenius and ${\rm soc}_\ell(R)\simeq R/J(R)$ as left
$R$-modules; (2) $R$ is quasi-Frobenius and ${\rm soc}_r(R)\simeq
R/J(R)$ as right $R$-modules; (3) $R$ is quasi-Frobenius and
$n_i=n_{\pi(i)}$ for any $1\leq i\leq t$, where $n_1,\ldots,n_t$
are the multiplicities of the isomorphism types of principal
indecomposable modules, and $\pi$ is the Nakayama permutation
(which is just the one we described in Section
\ref{sectionmoreQF}, when we regard $R$ as a graded ring with
trivial grading); (4) ${\rm soc}_\ell(R)\simeq R/J(R)$ as left
$R$-modules, and ${\rm soc}_r(R)\simeq R/J(R)$ as right
$R$-modules; (5) $R$ is quasi-Frobenius and $R/J(R)\simeq {\rm
Hom}_{-R}(R/J(R),R)$ as left $R$-modules; (6) $R$ is
quasi-Frobenius and $R/J(R)\simeq {\rm Hom}_{R-}(R/J(R),R)$ as
right $R$-modules.

We will need the following simple fact.

\begin{lemma}  \label{lemasumesimple}
If $R$ is a $G$-graded ring, $S$ is a graded simple left
$R$-module, $S'$ is a graded simple right $R$-module, and
$g_1,\ldots,g_n,h_1, \ldots,h_n\in G$, then:\\
{\rm (1)} $S(g_1)\oplus \ldots \oplus S(g_n)\simeq S(h_1)\oplus
\ldots \oplus S(h_n)$ in $R-gr$ if and only if the sequence of
left $\Sigma(S)$-cosets $g_1\Sigma(S),\ldots,g_n\Sigma(S)$ is a
permutation of $h_1\Sigma(S),\ldots,h_n\Sigma(S)$.\\
{\rm (2)} $(g_1)S'\oplus \ldots \oplus (g_n)S'\simeq (h_1)S'\oplus
\ldots \oplus (h_n)S'$ in $gr-R$ if and only if the sequence of
right cosets $\Sigma(S')g_1,\ldots,\Sigma(S')g_n$ is a permutation
of $\Sigma(S')h_1,\ldots,\Sigma(S')h_n$.
\end{lemma}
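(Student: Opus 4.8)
The plan is to prove (1) and observe that (2) follows by the obvious left-right symmetry (replacing $M(\sigma)$ by $(\sigma)M$ and $\Sigma(S)$-cosets on the right). For (1), the key structural fact is that $S(g) \simeq S(h)$ in $R-gr$ if and only if $h^{-1}g \in \Sigma(S)$, i.e. if and only if $g\Sigma(S) = h\Sigma(S)$; this is immediate from the definition of the inertia group $\Sigma(S) = \{ \sigma \in G \mid S(\sigma)\simeq S\}$ together with $S(g) \simeq S(h) \Leftrightarrow S(g)(h^{-1}) \simeq S \Leftrightarrow S(gh^{-1})\simeq S$ — wait, one must be careful with the shift convention: $M(\sigma)(\tau) = M(\tau\sigma)$, so $S(g)(h^{-1}) = S(h^{-1}g)$, giving $S(g)\simeq S(h) \Leftrightarrow h^{-1}g \in \Sigma(S) \Leftrightarrow g\Sigma(S) = h\Sigma(S)$. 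So the isoshift type of a shift $S(g)$ is detected exactly by the left coset $g\Sigma(S)$.

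The ``if'' direction of (1) is then trivial: a permutation of the $g_i\Sigma(S)$ matching the $h_j\Sigma(S)$ gives, term by term, isomorphisms $S(g_i) \simeq S(h_{j})$, which sum to an isomorphism of the direct sums. For the ``only if'' direction, I would argue by a counting/multiplicity argument. Suppose $\bigoplus_i S(g_i) \simeq \bigoplus_j S(h_j)$. Both sides are semisimple graded modules, and by the graded Krull--Schmidt theorem (valid here since each $S(g_i)$ has a local graded endomorphism ring — indeed $\mathrm{END}_R(S(g_i))$ is a graded division ring) the indecomposable summands on the two sides agree up to isomorphism and permutation, counted with multiplicity. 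For each left coset $C$ of $\Sigma(S)$ in $G$, the multiplicity of the isomorphism type ``$S(g)$ with $g\Sigma(S) = C$'' on the left is $\#\{ i \mid g_i\Sigma(S) = C\}$ and on the right is $\#\{ j \mid h_j\Sigma(S) = C\}$; equality of these for all $C$ is exactly the statement that the sequences $(g_i\Sigma(S))_i$ and $(h_j\Sigma(S))_j$ are permutations of each other.

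The main obstacle is purely bookkeeping: one must be scrupulous about the shift convention $M(\sigma)_g = M_{g\sigma}$ so that the inertia-group coset that classifies $S(g)$ is a \emph{left} coset $g\Sigma(S)$ in part (1) and a \emph{right} coset $\Sigma(S')g$ in part (2) (here one uses $(\sigma)M_g = M_{\sigma g}$). Beyond that, the only non-formal ingredient is the applicability of graded Krull--Schmidt to finite direct sums of (shifts of) a graded simple module, which is standard; alternatively, one can avoid invoking Krull--Schmidt by a direct argument, projecting $\bigoplus_i S(g_i)$ onto its isotypic components indexed by isoshift types and comparing lengths, but the Krull--Schmidt phrasing is cleanest.
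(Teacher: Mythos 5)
Your proof is correct and takes essentially the same route as the paper: the paper simply records the key fact that $S(g)\simeq S(h)$ if and only if $g^{-1}h\in\Sigma(S)$ (equivalently $g\Sigma(S)=h\Sigma(S)$), and leaves the multiplicity/Krull--Schmidt bookkeeping for finite direct sums of graded simples implicit, which you have spelled out.

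Two minor remarks on your write-up. First, for the Krull--Schmidt justification it is $\mathrm{End}_{R\text{-}gr}(S(g_i))$, the degree-$\varepsilon$ part of the graded division ring $\mathrm{END}_R(S(g_i))$, that needs to be local; this degree-$\varepsilon$ part is itself a division ring, so the conclusion stands, but it is worth phrasing in terms of $\mathrm{End}_{R\text{-}gr}$ rather than $\mathrm{END}$. Second, since both sides are finite direct sums of graded simple objects, you can avoid invoking Krull--Schmidt altogether and just use uniqueness of isotypic decompositions of semisimple objects in an abelian category, which is slightly more elementary; either way the conclusion is immediate.
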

\begin{proof}
(1) follows from the fact that $S(g)\simeq S(h)$ if and only if
$g^{-1}h\in \Sigma (S)$, i.e., $g\Sigma(S)=h\Sigma(S)$. (2) is
similar.
\end{proof}

Let $R$ be a graded Artinian ring. Let $P_1,\ldots,P_t$ be a
system of representatives for the isoshift types of principal
graded indecomposable left $R$-modules, and say that the principal
graded indecomposable left $R$-modules of isoshift type $P_i$ that
occur in a decomposition of $R$ are isomorphic to
$P_i(g_{i1}),\ldots ,P_i(g_{in_i})$ for any $1\leq i\leq t$.

Then $S_i=P_i/J^{\rm gr}(R)P_i$, $1\leq i\leq t$, is a system of
representatives for the isoshift types of graded simple left
$R/J^{\rm gr}(R)$-modules, thus also for the isoshift types of
graded simple left $R$-modules, and a decomposition of $R/J^{\rm
gr}(R)$ as a sum of graded simple left $R$-modules is $$R/J^{\rm
gr}(R)=\oplus_{1\leq i\leq t}(\oplus_{1\leq j\leq
n_i}S_i(g_{ij})).$$

If moreover $R$ is graded quasi-Frobenius, let
$\sigma_1,\ldots,\sigma_t\in G$ be such that ${\rm soc}^{\rm
gr}(P_i)\simeq S_{\pi (i)}(\sigma_i)$ for any $i$, where $\pi \in
S(\{ 1,\ldots, t\})$ is the Nakayama permutation associated with
$R$ .

Recall that if $R$ is a graded quasi-Frobenius ring, then ${\rm
soc}_\ell^{\rm gr}(R)={\rm soc}_r^{\rm gr}(R)$, and we simply denote
this graded ideal of $R$ by ${\rm soc}^{\rm gr}(R)$.

\begin{theorem} \label{teoremagradedFrobenius}
Let $R$ be a graded Artinian ring, and let $\sigma\in G$. The following are equivalent.\\
$\mathrm{(1)}$ $R$ is graded quasi-Frobenius and
${\rm soc}^{\rm gr}(R)(\sigma)\simeq R/J^{\rm gr}(R)$ in $R-gr$.\\
$\mathrm{(2)}$ $R$ is graded quasi-Frobenius and
$(\sigma){\rm soc}^{\rm gr}(R)\simeq R/J^{\rm gr}(R)$ in $gr-R$.\\
$\mathrm{(3)}$ $R$ is graded quasi-Frobenius and for any $1\leq
i\leq t$ we have $n_i=n_{\pi (i)}$, and the sequence of left
cosets $\sigma g_{i1}\sigma_i\Sigma (S_{\pi(i)}), \ldots ,\sigma
g_{in_i}\sigma_i\Sigma (S_{\pi(i)})$ is a permutation of  $ g_{\pi
(i)1}\Sigma (S_{\pi(i)}), \ldots , g_{\pi
(i)n_i}\Sigma (S_{\pi(i)})$.\\
$\mathrm{(4)}$  ${\rm soc}_\ell^{\rm gr}(R)(\sigma)\simeq R/J^{\rm
gr}(R)$ in $R-gr$ and
$(\sigma){\rm soc}_r^{\rm gr}(R)\simeq R/J^{\rm gr}(R)$ in $gr-R$.\\
$\mathrm{(5)}$ $R$ is graded quasi-Frobenius and $(R/J^{\rm
gr}(R))\dual (\sigma)\simeq R/J^{\rm gr}(R)$ in $R-gr$ (where
$R/J^{\rm gr}(R)$ is regarded as a graded right $R$-module in the
left hand side, and as a graded left $R$-module in the right hand
side).\\
$\mathrm{(6)}$ $R$ is graded quasi-Frobenius and
$(\sigma)(R/J^{\rm gr}(R))\dual \simeq R/J^{\rm gr}(R)$ in $gr-R$
(where $R/J^{\rm gr}(R)$ is regarded as a graded left $R$-module
in the left hand side, and as a graded right $R$-module in the
right hand side).
\end{theorem}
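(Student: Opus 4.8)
The plan is to prove the cyclic chain of implications $(1)\Rightarrow(3)\Rightarrow(2)\Rightarrow(1)$, the equivalence $(1)\Leftrightarrow(4)$, and $(1)\Leftrightarrow(5)$, $(2)\Leftrightarrow(6)$, using the set of data $(t,(n_i),(g_{ij}),\pi,(\sigma_i))$ attached to a graded quasi-Frobenius ring in Section~\ref{sectionmoreQF}, together with Lemma~\ref{lemasumesimple}, Lemma~\ref{calculSigmaSi}, and Lemma~\ref{lemadualsuspensie}. The whole point is to translate the module isomorphisms in $R-gr$ and $gr-R$ into coset combinatorics via the decompositions $R/J^{\rm gr}(R)=\oplus_{i}\oplus_{j}S_i(g_{ij})$ on the left, $R/J^{\rm gr}(R)=\oplus_i\oplus_j(g_{ij}^{-1})S'_i$ on the right, and the corresponding socle decompositions.

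\textbf{The left-hand description of the socle.} First I would record that, for $R$ graded quasi-Frobenius, ${\rm soc}^{\rm gr}(R)=\bigoplus_{i,j}{\rm soc}^{\rm gr}(P_i(g_{ij}))=\bigoplus_{i,j}S_{\pi(i)}(\sigma_i g_{ij})$ as a graded left $R$-module, using ${\rm soc}^{\rm gr}(P_i)\simeq S_{\pi(i)}(\sigma_i)$ and the fact that socle commutes with shifts, i.e.\ ${\rm soc}^{\rm gr}(M(g))={\rm soc}^{\rm gr}(M)(g)$ (careful: the shift on the left module side acts on the right, so ${\rm soc}^{\rm gr}(P_i(g_{ij}))=S_{\pi(i)}(\sigma_i)(g_{ij})=S_{\pi(i)}(\sigma_i g_{ij})$). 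Hence ${\rm soc}^{\rm gr}(R)(\sigma)=\bigoplus_{i,j}S_{\pi(i)}(\sigma_i g_{ij}\sigma)$. Regrouping the terms indexed by their isoshift type: the $S_k$-isotypic part of ${\rm soc}^{\rm gr}(R)(\sigma)$ comes from those $i$ with $\pi(i)=k$, i.e.\ $i=\pi^{-1}(k)$, and equals $\bigoplus_j S_k(\sigma_{\pi^{-1}(k)} g_{\pi^{-1}(k)j}\sigma)$, while the $S_k$-isotypic part of $R/J^{\rm gr}(R)$ is $\bigoplus_j S_k(g_{kj})$. By Lemma~\ref{lemasumesimple}(1), the isomorphism ${\rm soc}^{\rm gr}(R)(\sigma)\simeq R/J^{\rm gr}(R)$ holds if and only if, for every $k$, the multiset of left $\Sigma(S_k)$-cosets $\{\sigma_{\pi^{-1}(k)}g_{\pi^{-1}(k)j}\sigma\,\Sigma(S_k)\}_{j=1}^{n_{\pi^{-1}(k)}}$ equals the multiset $\{g_{kj}\Sigma(S_k)\}_{j=1}^{n_k}$; in particular the cardinalities match, forcing $n_{\pi^{-1}(k)}=n_k$, i.e.\ $n_{\pi(i)}=n_i$ for all $i$. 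Re-indexing $k=\pi(i)$ and using $\Sigma(S_{\pi(i)})$ gives exactly the coset condition displayed in (3). Note that $\sigma_{i}g_{ij}\sigma\,\Sigma(S_{\pi(i)})$ should be rewritten as $\sigma\,g_{ij}\sigma_i\,\Sigma(S_{\pi(i)})$ only after commuting past $\Sigma(S_{\pi(i)})$ appropriately, so I would be careful here: since $\Sigma(S_i)=\sigma_i\Sigma(S_{\pi(i)})\sigma_i^{-1}$ by Lemma~\ref{calculSigmaSi}(2), the coset $\sigma_i g_{ij}\sigma\,\Sigma(S_{\pi(i)})$ can equivalently be written as $\sigma_i\,(g_{ij}\sigma)\,\Sigma(S_{\pi(i)})$, and the statement of (3) writes $\sigma g_{ij}\sigma_i$ instead — this is consistent provided one tracks which side the shifts act on, and checking that this bookkeeping matches the convention for left versus right shifts is where I expect the main technical friction. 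This establishes $(1)\Leftrightarrow(3)$.

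\textbf{The right-hand side and closing the loop.} The implication $(2)\Leftrightarrow(3)$ is the mirror of the above: now ${\rm soc}^{\rm gr}(e'R)\simeq(\sigma_i)(e_iR/e_iJ^{\rm gr}(R))$ via Corollary~\ref{prenakayamapermutation}/\ref{nakayamapermutation} (here $e'=e_{\pi(i)}$), the right isoshift component of type $P'_i$ is $\bigoplus_j(g_{ij}^{-1})P'_i$, so $(\sigma){\rm soc}^{\rm gr}(R)=\bigoplus_{i,j}(\sigma)(g_{ij}^{-1})(\sigma_i)S'_i$ regrouped by isoshift type, and Lemma~\ref{lemasumesimple}(2) converts the isomorphism with $R/J^{\rm gr}(R)=\bigoplus_{i,j}(g_{ij}^{-1})S'_i$ into a statement about right $\Sigma(S'_i)$-cosets; since $\Sigma(S'_i)=\Sigma(S_i)$ by Lemma~\ref{calculSigmaSi}(1), this right-coset condition is literally the inverse of the left-coset condition in (3), hence equivalent to it (take inverses of all the cosets). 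So $(1)\Leftrightarrow(3)\Leftrightarrow(2)$. For $(1)\Leftrightarrow(4)$: $(4)\Rightarrow(1)$ because ${\rm soc}_\ell^{\rm gr}(R)(\sigma)\simeq R/J^{\rm gr}(R)$ forces every isoshift type of graded simple left module to embed into $R$ (it occurs as a summand of the socle), and similarly on the right, and then the socle-of-each-principal-indecomposable being simple follows from counting: the number of isoshift types of simples equals $t$ on each side, the socle decomposes as a sum of $\sum n_i$ simples which matches $R/J^{\rm gr}(R)$, and since each principal indecomposable $P_i(g_{ij})$ contributes at least one simple submodule and there are $\sum n_i$ of them, each contributes exactly one — so Theorem~\ref{QFsocleindec} applies; conversely $(1)\Rightarrow(4)$ is immediate since ${\rm soc}_\ell^{\rm gr}(R)={\rm soc}_r^{\rm gr}(R)={\rm soc}^{\rm gr}(R)$ for graded quasi-Frobenius rings. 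Finally $(1)\Leftrightarrow(5)$ and $(2)\Leftrightarrow(6)$: when $R$ is graded quasi-Frobenius, the duality of Section~\ref{sectionmoreQF} together with Theorem~\ref{QFdualsimple} gives $(R/J^{\rm gr}(R))\dual\simeq\bigoplus_{i,j}(S'_i(g_{ij}^{-1}))\dual\simeq\bigoplus_{i,j}(g_{ij})S_i^{\wedge\wedge}$-type computations; more cleanly, $R/J^{\rm gr}(R)\simeq R/{\rm soc}^{\rm gr}(R)$'s dual and ${\rm soc}^{\rm gr}(R)\dual$ relate by the double-annihilator duality, and one checks ${\rm soc}^{\rm gr}(R)\simeq(R/J^{\rm gr}(R))\dual$ as graded right modules (this is essentially the identification $(eR/eJ^{\rm gr}(R))\dual\simeq{\rm soc}^{\rm gr}(Re)$ from the proof of Theorem~\ref{QFsocleindec} summed over the $e_i$), so condition (5) becomes ${\rm soc}^{\rm gr}(R)(\sigma)\simeq R/J^{\rm gr}(R)$, which is (1); Lemma~\ref{lemadualsuspensie} handles the shift $\sigma$ passing through $\dual$. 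The main obstacle I anticipate is not any single deep step but the consistent bookkeeping of which side each shift acts on — left-module shifts act on the right of the grading index, right-module shifts on the left, duals reverse this, and the inertia-group conjugation $\Sigma(S_i)=\sigma_i\Sigma(S_{\pi(i)})\sigma_i^{-1}$ must be threaded through every coset manipulation; getting the statement of (3) to come out in exactly the printed form will require care but no new ideas.
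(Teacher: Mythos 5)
Your overall architecture — decompose $\mathrm{soc}^{\rm gr}(R)$ through the principal indecomposables, convert the isomorphism in $R$-$gr$ into coset equality via Lemma~\ref{lemasumesimple}, mirror for $gr$-$R$ using Lemma~\ref{calculSigmaSi}, count for $(4)\Rightarrow(1)$, and use $(R/J^{\rm gr}(R))\dual\simeq\mathrm{soc}^{\rm gr}(R)$ plus Lemma~\ref{lemadualsuspensie} for (5), (6) — is exactly what the paper does. But there is one concrete error that corrupts the entire $(1)\Leftrightarrow(3)$ computation, and you flag it yourself as friction without resolving it: you have the shift-composition rule backwards. From $M(\sigma)_g = M_{g\sigma}$ one gets $(M(\sigma))(\tau)_g = M(\sigma)_{g\tau} = M_{g\tau\sigma}$, so $(M(\sigma))(\tau) = M(\tau\sigma)$. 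You instead write $(S_{\pi(i)}(\sigma_i))(g_{ij}) = S_{\pi(i)}(\sigma_i g_{ij})$, i.e.\ $(M(\sigma))(\tau) = M(\sigma\tau)$, and this propagates to give $\mathrm{soc}^{\rm gr}(R)(\sigma) \simeq \oplus_{i,j}S_{\pi(i)}(\sigma_i g_{ij}\sigma)$ and a coset condition involving $\sigma_i g_{ij}\sigma\,\Sigma(S_{\pi(i)})$, whereas the correct computation gives $\oplus_{i,j}S_{\pi(i)}(\sigma g_{ij}\sigma_i)$ and hence $\sigma g_{ij}\sigma_i\,\Sigma(S_{\pi(i)})$ as in statement (3).

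Your attempt to rescue this — conjugating by $\sigma_i$ via $\Sigma(S_i)=\sigma_i\Sigma(S_{\pi(i)})\sigma_i^{-1}$ and claiming the two forms agree \emph{provided one tracks which side the shifts act on} — does not work. For a nonabelian $G$, $\sigma_i g_{ij}\sigma$ and $\sigma g_{ij}\sigma_i$ are in general not in the same left $\Sigma(S_{\pi(i)})$-coset, and Lemma~\ref{calculSigmaSi}(2) conjugates the inertia subgroup, not the coset representatives; there is nothing to ``commute past.'' The source of the discrepancy is simply the anti-homomorphic composition of shifts, and once that is corrected the rest of your argument (including the right-sided mirror for $(2)\Leftrightarrow(3)$, where similarly $(\sigma)((\tau)N) = (\tau\sigma)N$) goes through as planned and matches the paper.
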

\begin{proof}
(1)$\Leftrightarrow$(3) The decomposition $R=\oplus_{1\leq i\leq
t}\oplus_{1\leq j\leq n_i}P_{ij}$ shows that
 \bea {\rm soc}^{\rm gr}(R)&\simeq&
\oplus_{1\leq i\leq t}(\oplus_{1\leq j\leq
n_i}{\rm soc}^{\rm gr}P_i(g_{ij})) \\
&\simeq& \oplus_{1\leq i\leq t}(\oplus_{1\leq j\leq
n_i}(S_{\pi (i)}(\sigma_i))(g_{ij}))\\
&\simeq& \oplus_{1\leq i\leq t}(\oplus_{1\leq j\leq
n_i}(S_{\pi (i)}(g_{ij}\sigma_i)))
\eea

On the other hand, $R/J^{\rm gr}(R)=\oplus_{1\leq i\leq
t}(\oplus_{1\leq j\leq n_i}S_i(g_{ij}))$, so then ${\rm soc}^{\rm
gr}(R)(\sigma)\simeq R/J^{\rm gr}(R)$ if and only if the
components of the same isoshift type in these two graded
$R$-modules are isomorphic, i.e.,
$$\oplus_{1\leq j\leq
n_i}(S_{\pi (i)}(\sigma g_{ij}\sigma_i))\simeq \oplus_{1\leq j\leq
n_{\pi (i)}}S_{\pi (i)}(g_{\pi (i)j})$$ for any $i$. Using Lemma
\ref{lemasumesimple}, this is equivalent to the fact that for any
$i$ we have $n_i=n_{\pi (i)}$, and the sequence of left cosets
$\sigma g_{i1}\sigma_i\Sigma (S_{\pi(i)}), \ldots ,\sigma
g_{in_i}\sigma_i\Sigma (S_{\pi(i)})$ is a permutation of $ g_{\pi
(i)1}\Sigma (S_{\pi(i)}), \ldots , g_{\pi (i)n_i}\Sigma
(S_{\pi(i)})$.

(2)$\Leftrightarrow$(3) As in (1)$\Leftrightarrow$(3), the
decomposition $R=\oplus_{1\leq r\leq t}\oplus_{1\leq j\leq
n_r}P'_{rj}$ shows that

$${\rm soc}^{\rm gr}(R)\simeq \oplus_{1\leq r\leq t}(\oplus_{1\leq j\leq
n_r}((\sigma_{\pi^{-1}(r)}g_{rj}^{-1})S'_{\pi^{-1} (r)}))$$

Hence $(\sigma){\rm soc}^{\rm gr}(R)\simeq R/J^{\rm gr}(R)$ if and
only if
$$\oplus_{1\leq j\leq n_r}(\sigma_{\pi^{-1}(r)}g_{rj}^{-1}\sigma)S'_{\pi^{-1} (r)}\simeq
\oplus_{1\leq j\leq n_{\pi^{-1} (r)}}(g_{\pi^{-1}(r)j}^{-1}
)S'_{\pi^{-1} (r)}$$ for any $r$. Denoting $i=\pi^{-1}(r)$, this
means that $n_i=n_{\pi(i)}$ and, using
 Lemma \ref{lemasumesimple}, that
 $\Sigma(S'_i)g_{i1}^{-1},\ldots,\Sigma(S'_i)g_{in_i}^{-1}$ is a
 permutation of $\Sigma(S'_i)\sigma_i g_{\pi
 (i)1}^{-1}\sigma,\ldots,\Sigma(S'_i)\sigma_ig_{\pi(i)n_i}^{-1}\sigma$.
 Passing to left cosets and taking into account that
 $\Sigma(S'_i)=\Sigma(S_i)$, this rewrites that
 $g_{i1}\Sigma(S_i), \ldots,g_{in_i}\Sigma(S_i)$ is a permutation
 of
 $\sigma^{-1}g_{\pi(i)1}\sigma_i^{-1}\Sigma(S_i),\ldots,\sigma^{-1}g_{\pi(i)n_i}\sigma_i^{-1}\Sigma(S_i)$.
 By Lemma \ref{calculSigmaSi} we know that $\Sigma(S_i)=\sigma_i
 \Sigma(S_{\pi(i)})\sigma_i^{-1}$, and the condition becomes that $$g_{i1}\sigma_i\Sigma(S_{\pi(i)})\sigma_i^{-1},
 \ldots,g_{in_i}\sigma_i\Sigma(S_{\pi(i)})\sigma_i^{-1}$$ is a permutation
 of
 $$\sigma^{-1}g_{\pi(i)1}\Sigma(S_{\pi(i)})\sigma_i^{-1},\ldots,\sigma^{-1}g_{\pi(i)n_i}\Sigma(S_{\pi(i)})\sigma_i^{-1},$$
 which after right multiplication by $\sigma_i$ and left multiplication by $\sigma$ becomes just the
 condition in (3).

 It is obvious that if (1) holds (thus so does (2)), then (4)
 holds, too. Assume now that (4) holds and we prove (1). In fact, we just need to show that $R$ is graded
 quasi-Frobenius. Since each isoshift type of graded simple
 left $R$-modules occurs inside $R/J^{\rm gr}(R)$, it also occurs
 inside ${\rm soc}_\ell^{\rm gr}(R)$, thus it embeds into $R$, too. On the
 other hand, if we decompose $R=\oplus_{1\leq p\leq m}Q_p$ into a sum
 of indecomposable graded left $R$-modules, then $R/J^{\rm gr}(R)\simeq \oplus_{1\leq p\leq
 m}Q_p/J^{\rm gr}(R)Q_p$, a direct sum of $m$ graded simple modules,
 so then
 ${\rm soc}_\ell^{\rm gr}(R)(\sigma)=\oplus_{1\leq p\leq m}{\rm soc}^{\rm gr}(Q_p)(\sigma)$ is also a direct sum of $m$ graded simple modules.
 As each ${\rm soc}^{\rm gr}(Q_p)$ is non-zero, we get that it
 must be
 graded simple. We proceed the same to the right and Theorem
 \ref{QFsocleindec} shows that $R$ is graded quasi-Frobenius.

(1)$\Leftrightarrow$(5) Let us first note that if $R$ is graded
quasi-Frobenius, then the injectivity of $R$ shows that for any
morphism $f:{\rm soc}^{\rm gr}(R)\ra R$ of degree $\sigma$ of graded
left $R$-modules, there exists $a\in R_\sigma$ such that $f(r)=ra$
for any $r\in {\rm soc}^{\rm gr}(R)$; denote by $f_a$ the morphism
associated with $a$ in this way. Clearly, $f_a=f_b$ if and only if
$a-b\in {\rm ann}_r({\rm soc}^{\rm gr}(R))=J^{\rm gr}(R)$. This
induces an isomorphism of graded right $R$-modules $R/J^{\rm
gr}(R)\simeq {\rm soc}^{\rm gr}(R)\dual$, which associates $f_a$ to
$\hat{a}\in R/J^{\rm gr}(R)$ ($\hat{a}$ is the class of $a$ modulo
$J^{\rm gr}(R)$). Therefore $(R/J^{\rm gr}(R))\dual\simeq {\rm
soc}^{\rm gr}(R)\dual\dual\simeq {\rm soc}^{\rm gr}(R)$ as graded
left $R$-modules. Now the equivalence of the two conditions is
clear.

(2)$\Leftrightarrow$(6) is similar to (1)$\Leftrightarrow$(5),
working the opposite side.
\end{proof}

A graded Artinian ring $R$ satisfying the equivalent conditions in
Theorem \ref{teoremagradedFrobenius} will be called a {\it
$\sigma$-graded Frobenius ring}. An $\varepsilon$-graded Frobenius
ring will be simply called a {\it graded Frobenius ring}.

The following gives some examples of graded Frobenius rings and a
method of constructing new graded Frobenius rings from known ones.

\begin{proposition}
{\rm (1)} If $R_1,\ldots,R_n$ are $G$-graded rings, then
$R_1\times \ldots \times R_n$ is graded quasi-Frobenius if and
only if
$R_1,\ldots,R_n$ are graded quasi-Frobenius. \\
{\rm (2)} If $R_1,\ldots,R_n$ are $G$-graded rings and $\sigma\in
G$, then $R_1\times \ldots \times R_n$ is $\sigma$-graded
Frobenius if and only if
$R_1,\ldots,R_n$ are $\sigma$-graded Frobenius. \\
{\rm (3)} A graded semisimple ring $R$ is graded Frobenius. In
particular, graded division rings are graded Frobenius.
\end{proposition}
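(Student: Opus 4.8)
The plan is to handle the three parts in order, with (1) and (2) reducing to the corresponding facts about finite products of graded rings together with the data-bookkeeping set up in Section \ref{sectionmoreQF}, and (3) being a short consequence of (2) once we understand the structure of graded semisimple rings. For part (1), the key observation is that for a finite product $R=R_1\times\ldots\times R_n$ the notions ``graded left Noetherian'' and ``graded left injective'' are both coordinatewise: $R$ is graded left Noetherian iff each $R_k$ is, and an object of $R-gr$ decomposes as a product of objects of $R_k-gr$, so $R$ is graded injective as a left module over itself iff each $R_k$ is graded injective over itself. Thus condition (3) of Theorem \ref{gradedQF} holds for $R$ iff it holds for each $R_k$, which gives (1). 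One should remark that the idempotents $(0,\ldots,1,\ldots,0)$ are central homogeneous of trivial degree, so the graded module categories genuinely split as a product; this is the one place to be slightly careful, but it is routine.

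For part (2), I would use the characterization in Theorem \ref{teoremagradedFrobenius}(1): $R$ is $\sigma$-graded Frobenius iff $R$ is graded quasi-Frobenius and ${\rm soc}^{\rm gr}(R)(\sigma)\simeq R/J^{\rm gr}(R)$ in $R-gr$. By part (1), $R=R_1\times\ldots\times R_n$ is graded quasi-Frobenius iff each $R_k$ is. Now $J^{\rm gr}(R)=\prod_k J^{\rm gr}(R_k)$ and ${\rm soc}^{\rm gr}(R)=\prod_k {\rm soc}^{\rm gr}(R_k)$, and shifting respects the product decomposition, so $R/J^{\rm gr}(R)\simeq\prod_k R_k/J^{\rm gr}(R_k)$ and ${\rm soc}^{\rm gr}(R)(\sigma)\simeq\prod_k {\rm soc}^{\rm gr}(R_k)(\sigma)$ in $R-gr$. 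Since a graded left $R$-module isomorphism between two such products is the same as a family of graded left $R_k$-module isomorphisms between the respective factors (again using that the $R_k-gr$ are the ``blocks'' of $R-gr$), we conclude ${\rm soc}^{\rm gr}(R)(\sigma)\simeq R/J^{\rm gr}(R)$ over $R$ iff ${\rm soc}^{\rm gr}(R_k)(\sigma)\simeq R_k/J^{\rm gr}(R_k)$ over each $R_k$. Combining, $R$ is $\sigma$-graded Frobenius iff each $R_k$ is.

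For part (3), first recall from Section \ref{sectiongradedsimple} that a graded semisimple ring is a finite product of graded simple and graded left Artinian rings, so by (2) it suffices to treat a single such ring $A=M_n(\Delta)(g_1,\ldots,g_n)$ (the case $n=1$, $g_1=\varepsilon$ being a graded division ring $\Delta$ itself). Such an $A$ is graded left Artinian and, being graded semisimple, has $J^{\rm gr}(A)=0$; also ${\rm soc}^{\rm gr}(A)=A$ since every graded simple submodule contributes to the socle and $A$ is a sum of these. Moreover $A$ is graded left injective: it is graded semisimple, so every graded submodule of every graded module is a direct summand, and in particular $A$ is a direct summand of any graded module containing it, hence graded injective (one may instead invoke Theorem \ref{gradedBaer} directly since any morphism from a graded left ideal extends, the ideal being a direct summand). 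Thus $A$ is graded quasi-Frobenius, and ${\rm soc}^{\rm gr}(A)(\varepsilon)=A=A/0=A/J^{\rm gr}(A)$ trivially, so $A$ is $\varepsilon$-graded Frobenius, i.e.\ graded Frobenius. By (2) the finite product is graded Frobenius, proving (3); the final sentence about graded division rings is the special case $n=1$.

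The main obstacle is purely the careful verification in parts (1) and (2) that $R-gr$ for a finite product decomposes as the product of the categories $R_k-gr$ in a way compatible with shifts, radicals, socles, and isomorphisms of modules; none of these steps is deep, but one must state them cleanly so that ``coordinatewise'' really does transfer each of the relevant conditions. Everything else is bookkeeping with the invariants already in place, and part (3) is then essentially immediate.
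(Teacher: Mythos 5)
Your proof is correct and takes essentially the same approach as the paper: parts (1) and (2) reduce to the coordinatewise behavior of the characterizations in Theorems \ref{gradedQF} and \ref{teoremagradedFrobenius}, and part (3) is a direct verification for a graded semisimple ring. The only cosmetic differences are that in (1) you invoke condition (3) of Theorem \ref{gradedQF} (graded left Noetherian plus graded left injective) rather than condition (1) --- either transfers coordinatewise --- and in (3) you split $R$ into graded simple graded left Artinian factors before arguing, which is an unnecessary detour since the same reasoning (graded semisimple implies graded Artinian and injective in $R-gr$, and ${\rm soc}^{\rm gr}(R)=R$, $J^{\rm gr}(R)=0$) already applies to $R$ as a whole, as the paper does.
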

\begin{proof}
(1) follows easily if we use the characterization of the graded
quasi-Frobenius property in Theorem \ref{gradedQF} (1).

(2) follows from the characterization of the graded Frobenius
property in Theorem \ref{teoremagradedFrobenius} (1).

(3) $R$ is graded Artinian, thus graded left Noetherian, since it
is graded semisimple. Moreover, $R-gr$ is a semisimple category,
so any object is injective. In particular $R$ is injective as a
graded left $R$-module. Thus $R$ is graded quasi-Frobenius. Now
${\rm soc}^{\rm gr}(R)=R$ and $J^{\rm gr}(R)=0$, so obviously
${\rm soc}^{\rm gr}(R)\simeq R/J^{\rm gr}(R)$ in $R-gr$.
\end{proof}

Now we show that the concept of a graded Frobenius ring matches
with the one of a graded Frobenius algebra in the case of a finite
dimensional graded algebra over a field. Let $A=\oplus_{g\in
G}A_g$ be a finite dimensional $G$-graded $k$-algebra, where $k$
is a field. Then the linear dual space $A^*=Hom_k(A,k)$ is a
$G$-graded vector space, whose homogeneous component of degree $g$
is $(A^*)_g=\{ f\in A^*|\; f(A_h)=0\;\mbox{for any }h\neq
g^{-1}\}$. Moreover, when regarded with the $A$-bimodule structure
induced by the $A$-bimodule structure of $A$, $A^*$ becomes a
graded left $A$-module and a graded right $A$-module. Then $A$ is
called a $\sigma$-graded Frobenius algebra if $A(\sigma)\simeq
A^*$ in $A-gr$, or equivalently, if $(\sigma)A\simeq A^*$ in
$gr-A$, see \cite[Section 3]{dnn1}.

\begin{proposition}
Let $R$ be a finite dimensional $G$-graded $k$-algebra, where $k$
is a field, and let $\sigma\in G$. Then $R$ is a $\sigma$-graded
Frobenius algebra if and only if it is a $\sigma$-graded Frobenius
ring.
\end{proposition}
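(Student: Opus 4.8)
The plan is to route everything through the graded socle of the linear dual $R^*={\rm Hom}_k(R,k)$, exploiting that for a finite dimensional graded algebra the functor ${\rm Hom}_k(-,k)$ is well behaved. First some painless reductions: since $R$ is finite dimensional it is two-sided Artinian, hence graded Artinian, and of finite length on both sides, hence graded left and right Noetherian, so the finiteness hypotheses in Theorems \ref{gradedQF} and \ref{teoremagradedFrobenius} are automatically satisfied. Moreover $R^*$ is injective as an ungraded left $R$-module (by the adjunction ${\rm Hom}_R(-,R^*)\simeq{\rm Hom}_k(-,k)$ and exactness of ${\rm Hom}_k(-,k)$), hence graded left injective by \cite[Corollary 2.3.2]{nvo}, and it is finite dimensional, so of finite length, whence ${\rm soc}^{\rm gr}({}_RR^*)$ is essential in $R^*$. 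Finally, if $R(\sigma)\simeq R^*$ in $R-gr$ then $R$ itself is graded left injective, a shift of a graded injective module being graded injective.

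The technical heart is: a finite dimensional graded semisimple $k$-algebra $S$ is an $\varepsilon$-graded Frobenius algebra, i.e. $S^*\simeq S$ in $S-gr$. (This is the algebra-theoretic statement, not the observation that $S$ is graded Frobenius as a ring, which is trivial since ${\rm soc}^{\rm gr}(S)=S=S/J^{\rm gr}(S)$.) Since ${\rm Hom}_k(-,k)$ commutes with finite products and $S$ is a product of blocks $M_n(\Delta)(g_1,\dots,g_n)$ with $\Delta$ a finite dimensional graded division ring (Theorem \ref{teoremagradedsimple}), it suffices to treat $A=M_n(\Delta)(g_1,\dots,g_n)$. I would fix a nonzero $k$-linear functional $\phi\colon\Delta\to k$ vanishing on $\Delta_g$ for $g\neq\varepsilon$, and consider the bilinear form $\beta(a,b)=\phi\bigl(\sum_i(ab)_{ii}\bigr)$ on $A$. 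It is associative, $\beta(ab,c)=\beta(a,bc)$, because $(ab)c=a(bc)$; it is graded of degree $\varepsilon$, since for $a\in A_g$, $b\in A_h$ one has $(ab)_{ii}\in\Delta_{g_i(gh)g_i^{-1}}$, which $\phi$ annihilates unless $gh=\varepsilon$; and it is nondegenerate, because given $0\neq a\in A_g$ one picks an entry $a_{pq}\neq0$ (a homogeneous unit of $\Delta$) and takes $b$ supported at $(q,p)$, so that $\beta(a,b)=\phi(a_{pq}b_{qp})$, and as $b_{qp}$ runs over the appropriate homogeneous component of $\Delta$ the product $a_{pq}b_{qp}$ runs over all of $\Delta_\varepsilon\not\subseteq\ker\phi$. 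Such a form yields a degree-$\varepsilon$ isomorphism of graded left $A$-modules $A\to A^*$, $x\mapsto\beta(-,x)$, proving the claim.

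Granting this, both implications follow. Suppose $R$ is a $\sigma$-graded Frobenius algebra. Then $R(\sigma)\simeq R^*$ shows $R$ is graded left injective and graded left Noetherian, hence graded quasi-Frobenius by Theorem \ref{gradedQF}. Taking graded socles and using Remark \ref{socluanulator} together with $(a\cdot f)(x)=f(xa)$ for the left action on $R^*$, one finds ${\rm soc}^{\rm gr}({}_RR^*)=\{f\in R^*\mid f(J^{\rm gr}(R))=0\}$, canonically $(R/J^{\rm gr}(R))^*$ as a graded left $R$-module (the $k$-dual of $R/J^{\rm gr}(R)$ viewed as a graded right $R$-module). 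Hence ${\rm soc}^{\rm gr}(R)(\sigma)={\rm soc}^{\rm gr}(R(\sigma))\simeq{\rm soc}^{\rm gr}({}_RR^*)\simeq(R/J^{\rm gr}(R))^*\simeq R/J^{\rm gr}(R)$, the last step being the technical heart applied to the graded semisimple ring $R/J^{\rm gr}(R)$, so condition (1) of Theorem \ref{teoremagradedFrobenius} holds and $R$ is a $\sigma$-graded Frobenius ring. Conversely, if $R$ is a $\sigma$-graded Frobenius ring then $R$ is graded quasi-Frobenius, hence graded left injective; thus $R(\sigma)$ and $R^*$ are graded left injective modules of finite length, each therefore the graded injective envelope of its graded socle, and ${\rm soc}^{\rm gr}(R(\sigma))={\rm soc}^{\rm gr}(R)(\sigma)\simeq R/J^{\rm gr}(R)$ by condition (1) of Theorem \ref{teoremagradedFrobenius}, while ${\rm soc}^{\rm gr}(R^*)\simeq(R/J^{\rm gr}(R))^*\simeq R/J^{\rm gr}(R)$ as above; by uniqueness of graded injective envelopes, $R(\sigma)\simeq R^*$ in $R-gr$, i.e. $R$ is a $\sigma$-graded Frobenius algebra.

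I expect the main obstacle to be exactly the technical heart: exhibiting the nondegenerate, associative, degree-$\varepsilon$ bilinear form on $M_n(\Delta)(g_1,\dots,g_n)$, and in particular verifying that the degree-$\varepsilon$ functional $\phi$ on $\Delta$ keeps the form both homogeneous of degree $\varepsilon$ and nondegenerate. Everything else is routine bookkeeping with shifts, provided one is careful about which shift pairs with which side and uses the left-right symmetry already built into Theorem \ref{teoremagradedFrobenius}.
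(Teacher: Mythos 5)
Your proof is correct and follows essentially the same route as the paper's: identify ${\rm soc}^{\rm gr}({}_RR^*)\simeq(R/J^{\rm gr}(R))^*\simeq R/J^{\rm gr}(R)$ via Remark \ref{socluanulator} and the graded Frobenius property of the graded semisimple quotient, note that $R^*$ is graded left injective, deduce the forward implication by taking socles, and the converse by recognizing $R(\sigma)$ and $R^*$ as graded injective envelopes of isomorphic graded socles. The one divergence is that where the paper simply cites \cite[Corollary 4.5]{dnn1} for the fact that a finite dimensional graded semisimple $k$-algebra is graded Frobenius, you unfold the citation into a direct argument, reducing to a block $M_n(\Delta)(g_1,\ldots,g_n)$ and exhibiting the degree-$\varepsilon$, associative, nondegenerate trace form $\beta(a,b)=\phi\bigl(\sum_i(ab)_{ii}\bigr)$ with $\phi$ a nonzero functional supported on $\Delta_\varepsilon$ --- the verification you sketch is correct and makes the proof self-contained at the cost of a little bookkeeping.
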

\begin{proof}
Regard $R^*$ as a graded left $R$-module. Since $R$ is left
Artinian, we use Remark \ref{socluanulator} to see that $${\rm
soc}^{\rm gr}(_RR^*)=\{ r^*\in R^*|J^{\rm gr}(R)r^*=0\}=\{ r^*\in
R^*|r^*(J^{\rm gr}(R))=0\}.$$ This shows that there is an
isomorphism ${\rm soc}^{\rm gr}(_RR^*)\simeq (R/J^{\rm gr}(R))^*$
of graded left $R$-modules. Now $R/J^{\rm gr}(R)$ is a finite
dimensional graded semisimple $k$-algebra, so by \cite[Corollary
4.5]{dnn1} it is graded Frobenius. Thus $(R/J^{\rm gr}(R))^*\simeq
R/J^{\rm gr}(R)$ as graded left $R/J^{\rm gr}(R)$-modules, and
then also as graded left $R$-modules. We conclude that ${\rm
soc}^{\rm gr}(_RR^*)\simeq R/J^{\rm gr}(R)$ as graded left
$R$-modules.

Let us also note that $R^*$ is an injective left $R$-module (since
$R$ is a projective right $R$-module), and then so is as a graded
left $R$-module.

Now assume that $R$ is a $\sigma$-graded Frobenius algebra, i.e.,
$R(\sigma)\simeq R^*$ in $R-gr$. Thus $R(\sigma)$ is injective as
a graded left $R$-module, and then so is $R$, showing that $R$ is
graded quasi-Frobenius. Moreover, taking the socles, we have that
${\rm soc}^{\rm gr}_\ell(R)(\sigma)\simeq {\rm soc}^{\rm
gr}(_RR^*)\simeq R/J^{\rm gr}(R)$ as graded left $R$-modules. This
shows that $R$ is a $\sigma$-graded Frobenius ring.

Conversely, if $R$ is a $\sigma$-graded Frobenius ring, then $R$
is graded quasi-Frobenius, so $R$ is an injective graded left
$R$-module. Since $R$ is finite dimensional, ${\rm soc}^{\rm
gr}_\ell(R)$ is an essential graded left submodule of $R$, so then
$R=E^{\rm gr}({\rm soc}^{\rm gr}_\ell(R))$. On the other hand,
${\rm soc}^{\rm gr}(_RR^*)$ is essential in $R^*$ (since $R^*$ is
finite dimensional), and $_RR^*$ is injective, so $R^*=E^{\rm
gr}({\rm soc}^{\rm gr}(_RR^*))\simeq E^{\rm gr}(R/J^{\rm gr}(R))$.
As ${\rm soc}^{\rm gr}_\ell(R)(\sigma)\simeq R/J^{\rm gr}(R)$, we
get that $R(\sigma)\simeq R^*$ in $R-gr$, so $R$ is a graded
Frobenius algebra.
\end{proof}

We will give a structure result for $\sigma$-graded Frobenius
rings. We recall that if $M$ is a graded left $R$-module, and
$\sigma\in G$, then $M$ is called $\sigma$-faithful if
$X_\sigma\neq 0$ for any non-zero graded submodule $X$ of $M$;
this is equivalent to $R_{\sigma g^{-1}}m_g\neq 0$ for any
non-zero homogeneous element $m_g\in M_g$. The $\sigma$-faithful
condition can be defined similarly for graded right modules. The
following is obvious.

\begin{lemma} \label{lemafaithful}
Let $M$ be a graded left $R$-module, and let $U$ be a graded
submodule of $M$. The following hold.\\
{\rm (i)} If $M$ is $\sigma$-faithful, then so is $U$.\\
{\rm (ii)} If $U$ is essential in $M$ and $U$ is
$\sigma$-faithful, then $M$ is $\sigma$-faithful.\\
{\rm (iii)} If $U$ is essential in $M$, and $U$ is graded
semisimple, then $U={\rm soc}^{\rm gr}(M)$.
\end{lemma}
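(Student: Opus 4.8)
The plan is to deduce all three statements directly from two ingredients already available in the excerpt: first, the fact (recalled via \cite[Proposition 2.3.5]{nvo}) that a graded submodule $U$ of $M$ is essential as an $R$-submodule if and only if $U\cap X\neq 0$ for every non-zero \emph{graded} submodule $X$ of $M$, so that essentiality may be tested against graded submodules only; and second, the elementary fact that the intersection of two graded submodules is again a graded submodule. Together with the definition of $\sigma$-faithfulness (namely $X_\sigma\neq 0$ for every non-zero graded submodule $X$), each part reduces to a one-line verification, which is why the statement is labelled obvious.

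For (i), I would take a non-zero graded submodule $X$ of $U$; it is then a non-zero graded submodule of $M$, so $X_\sigma\neq 0$ since $M$ is $\sigma$-faithful, and therefore $U$ is $\sigma$-faithful. For (ii), let $X$ be a non-zero graded submodule of $M$. Essentiality of $U$ gives that the graded submodule $X\cap U$ of $M$ is non-zero; it is in particular a non-zero graded submodule of $U$, so $(X\cap U)_\sigma\neq 0$ because $U$ is $\sigma$-faithful. Since $(X\cap U)_\sigma\subseteq X_\sigma$, this forces $X_\sigma\neq 0$, and hence $M$ is $\sigma$-faithful.

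For (iii), the inclusion $U\subseteq {\rm soc}^{\rm gr}(M)$ is immediate because $U$, being graded semisimple, is a sum of graded simple submodules of $M$. For the reverse inclusion, let $S$ be an arbitrary graded simple submodule of $M$; essentiality of $U$ yields $S\cap U\neq 0$, and since $S\cap U$ is a graded submodule of the graded simple module $S$ we get $S\cap U=S$, i.e.\ $S\subseteq U$. Summing over all such $S$ gives ${\rm soc}^{\rm gr}(M)\subseteq U$, hence equality. I expect no genuine obstacle in this argument; the only point that deserves attention is the legitimacy of testing essentiality against graded submodules, which is exactly the content of the graded characterization of essential submodules cited above.
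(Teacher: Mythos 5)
Your argument is correct and complete. The paper labels this lemma ``obvious'' and provides no proof at all, so there is nothing to compare against; your write-up supplies exactly the routine verifications the authors chose to omit, correctly invoking the graded characterization of essentiality from \cite[Proposition 2.3.5]{nvo} and the fact that intersections of graded submodules are graded.
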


We say that the graded ring $R$ is $\sigma$-faithful to the left
(right) if it is $\sigma$-faithful as a graded left (right)
$R$-module.

\begin{lemma} \label{lemasoclu}
Let $R$ be a graded left Artinian graded ring which is
$\sigma$-faithful to the left. Then ${\rm soc}^{\rm gr}_\ell
(R)_\sigma={\rm soc}(_{R_\varepsilon}R_\sigma)$.
\end{lemma}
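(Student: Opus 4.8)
The plan is to identify both sides with explicit annihilator sets and then compare them. Since $R$ is graded left Artinian, ${\rm soc}^{\rm gr}_\ell(R)$ --- the direct sum of the left isoshift components of $R$ --- is exactly the graded socle of the graded left $R$-module $R$, so Remark \ref{socluanulator} (applied to $M={}_RR$) gives ${\rm soc}^{\rm gr}_\ell(R)=\{a\in R\mid J^{\rm gr}(R)a=0\}$; passing to the homogeneous component of degree $\sigma$ yields
$${\rm soc}^{\rm gr}_\ell(R)_\sigma=\{x\in R_\sigma\mid J^{\rm gr}(R)x=0\}.$$
On the other hand, $R_\varepsilon$ is left Artinian: for a descending chain $I_1\supseteq I_2\supseteq\cdots$ of left ideals of $R_\varepsilon$, the graded left ideals $RI_n$ of $R$ satisfy $(RI_n)_\varepsilon=R_\varepsilon I_n=I_n$, so the chain $(RI_n)_n$ stabilizes if and only if $(I_n)_n$ does, and $R$ is graded left Artinian. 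Hence Remark \ref{socluanulator}, applied to $R_\varepsilon$ viewed as a trivially graded ring and to $M={}_{R_\varepsilon}R_\sigma$, gives
$${\rm soc}({}_{R_\varepsilon}R_\sigma)=\{x\in R_\sigma\mid J(R_\varepsilon)x=0\}.$$

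Because $J(R_\varepsilon)=J^{\rm gr}(R)\cap R_\varepsilon\subseteq J^{\rm gr}(R)$, the inclusion ${\rm soc}^{\rm gr}_\ell(R)_\sigma\subseteq{\rm soc}({}_{R_\varepsilon}R_\sigma)$ is immediate from the two displays. For the reverse inclusion --- the only point where the $\sigma$-faithfulness hypothesis is used --- I would take $x\in R_\sigma$ with $J(R_\varepsilon)x=0$ and consider $N=J^{\rm gr}(R)x$. Since $J^{\rm gr}(R)$ is a graded left ideal and $x$ is homogeneous, $N=\bigoplus_{g\in G}J^{\rm gr}(R)_gx$ is a graded left submodule of $R$ (note $J^{\rm gr}(R)_gx\subseteq R_{g\sigma}$ and $R_h(J^{\rm gr}(R)_gx)\subseteq J^{\rm gr}(R)_{hg}x$), and its component in degree $\sigma$ is $N_\sigma=J^{\rm gr}(R)_\varepsilon x=(J^{\rm gr}(R)\cap R_\varepsilon)x=J(R_\varepsilon)x=0$. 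As $R$ is $\sigma$-faithful to the left, every non-zero graded left submodule of $R$ has non-zero component in degree $\sigma$; therefore $N=0$, i.e. $J^{\rm gr}(R)x=0$, so $x\in{\rm soc}^{\rm gr}_\ell(R)_\sigma$ by the first display. Combining the two inclusions gives the asserted equality.

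I do not expect a serious obstacle here: the argument is mostly bookkeeping with homogeneous components. The one genuinely used idea is that, once $J(R_\varepsilon)x=0$, the graded submodule $J^{\rm gr}(R)x$ is supported away from degree $\sigma$, so $\sigma$-faithfulness forces it to vanish. The only other point needing a word of care is the passage from ``annihilated by the radical'' to ``lies in the socle'' for the $R_\varepsilon$-module $R_\sigma$, which is precisely the ungraded content of Remark \ref{socluanulator} and relies on $R_\varepsilon$ being left Artinian, as verified above.
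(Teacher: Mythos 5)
Your proof is correct, and it takes a genuinely different route from the paper's. The paper argues directly with submodules: for the inclusion $\subseteq$ it observes that the degree-$\sigma$ component of any graded simple submodule of $R$ is either zero or a simple $R_\varepsilon$-module; for $\supseteq$ it starts from a simple $R_\varepsilon$-submodule $S\subseteq R_\sigma$, forms the non-zero graded submodule $RS$, uses graded left Artinianness to get $U={\rm soc}^{\rm gr}(RS)\neq 0$, then uses $\sigma$-faithfulness to force $U_\sigma\neq 0$ and hence $U_\sigma=S$, which lands in ${\rm soc}^{\rm gr}_\ell(R)_\sigma$. You instead convert both sides into annihilator sets: ${\rm soc}^{\rm gr}_\ell(R)_\sigma=\{x\in R_\sigma:J^{\rm gr}(R)x=0\}$ by Remark \ref{socluanulator}, and ${\rm soc}({}_{R_\varepsilon}R_\sigma)=\{x\in R_\sigma:J(R_\varepsilon)x=0\}$ after first checking (correctly, via the graded ideals $RI_n$ with $(RI_n)_\varepsilon=I_n$) that $R_\varepsilon$ is left Artinian. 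This makes $\subseteq$ immediate from $J(R_\varepsilon)=J^{\rm gr}(R)\cap R_\varepsilon\subseteq J^{\rm gr}(R)$, and isolates the role of $\sigma$-faithfulness in $\supseteq$ very cleanly: the graded submodule $N=J^{\rm gr}(R)x$ has $N_\sigma=J^{\rm gr}(R)_\varepsilon x=J(R_\varepsilon)x=0$, so $N=0$. The trade-off is that you invest a few lines to verify $R_\varepsilon$ is Artinian (which the paper's route sidesteps), but you gain a transparent $\subseteq$ and an elementary $\supseteq$ that doesn't pass through socles of auxiliary submodules; both arguments use exactly the same hypotheses in the same places.
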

\begin{proof}
"$\subset$" Let $\Sigma$ be a graded simple left submodule of $R$.
Then $\Sigma_\sigma$ is either 0 or a simple
$R_\varepsilon$-module. In either case $\Sigma_\sigma\subset {\rm
soc}(_{R_\varepsilon}R_\sigma)$.

"$\supset$" Let $S$ be a simple $R_\varepsilon$-submodule of
$R_\sigma$. Then $RS$ is a non-zero graded left submodule of $R$.
As $R$ is graded left Artinian, $U={\rm soc}^{\rm gr}(RS)\neq 0$.
Since $R$ is $\sigma$-faithful to the left, $U_\sigma\neq 0$. But
$U_\sigma \subset (RS)_\sigma=S$, so $U_\sigma=S$. Then
$S=U_\sigma\subset {\rm soc}(_{R_\varepsilon}R_\sigma)$.
\end{proof}

\begin{lemma} \label{1fidel}
Let $R$ be a graded left Artinian ring. Then the graded left
$R$-module $R/J^{\rm gr}(R)$ is $\varepsilon$-faithful.
\end{lemma}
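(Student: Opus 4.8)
The plan is to reduce everything to the fact that $R/J^{\rm gr}(R)$ is graded semisimple, and then to exploit that in a graded semisimple ring every graded left ideal is generated by an idempotent of trivial degree.

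First I would recall from Section \ref{sectionsingular} that, $R$ being graded left Artinian, the quotient ring $\bar R := R/J^{\rm gr}(R)$ is graded semisimple; equivalently, the category of graded left $\bar R$-modules is semisimple, so $\bar R$ regarded as a graded left module over itself is a direct sum of graded simple modules. Hence every graded left submodule $X$ of $\bar R$ is a direct summand: there is a graded left submodule $Y$ of $\bar R$ with $\bar R = X \oplus Y$.

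Next, given a non-zero graded left submodule $X$ of $\bar R$, I would write $1 = e + f$ with $e \in X$ and $f \in Y$, and show that $e$ is a non-zero homogeneous element of degree $\varepsilon$ in $X$. Decomposing $e$ and $f$ into their homogeneous components and comparing with the homogeneous decomposition of $1$ (which is concentrated in degree $\varepsilon$), all components of $e$ and of $f$ in degrees $g \neq \varepsilon$ lie in $X \cap Y = 0$; thus $e, f \in \bar R_\varepsilon$. Applying the same comparison to $e = e\cdot 1 = e^2 + ef$ (where $e^2 \in X$ since $X$ is a left ideal, and $ef \in \bar R f \subseteq \bar R Y \subseteq Y$ since $Y$ is a left ideal) yields $e^2 = e$, so $e$ is an idempotent of $\bar R_\varepsilon$ with $X = \bar R e$. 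If $e$ were $0$, then $1 = f \in Y$, forcing $Y = \bar R$ and $X = 0$, contrary to assumption; hence $e \neq 0$. Therefore $e$ is a non-zero element of $X \cap \bar R_\varepsilon = X_\varepsilon$, so $X_\varepsilon \neq 0$. Since this holds for every non-zero graded submodule, $\bar R$ is $\varepsilon$-faithful as a graded left $R$-module (the $R$-module and $\bar R$-module structures having the same graded submodules).

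This is a routine argument; the only mildly delicate point is verifying the homogeneity and idempotency of $e$ in the Peirce-type splitting $1 = e + f$, and this follows formally from $X$ and $Y$ being graded. I do not foresee a genuine obstacle here.
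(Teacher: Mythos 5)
Your proof is correct. You reduce, just as the paper does, to the fact that $R/J^{\rm gr}(R)$ is a graded semisimple ring and that graded submodules of $R/J^{\rm gr}(R)$ as an $R$-module coincide with those as an $R/J^{\rm gr}(R)$-module; the paper then simply invokes \cite[Proposition 2.9.6]{nvo} for the statement that a graded semisimple ring is $\varepsilon$-faithful (to the left and to the right), whereas you prove that fact from scratch. Your argument — take a graded complement $Y$ of the non-zero graded left ideal $X$, write $1=e+f$ with $e\in X$, $f\in Y$, compare homogeneous components to conclude $e,f\in \bar R_\varepsilon$, and then use $e=e^2+ef$ together with $X\cap Y=0$ to see $e$ is a non-zero idempotent of trivial degree with $X=\bar R e$ — is exactly the graded Peirce-decomposition computation that underlies the cited result, and all the steps (homogeneity of $e$, idempotency, non-vanishing) are justified correctly. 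So the content is the same; what your version buys is self-containment, at the cost of a slightly longer proof. Either form is acceptable, and there is no gap.
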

\begin{proof}
The graded $R$-submodules of $R/J^{\rm gr}(R)$ are the same as the
graded $R/J^{\rm gr}(R)$-submodules of $R/J^{\rm gr}(R)$. Now the
result follows since $R/J^{\rm gr}(R)$ is a graded semisimple
ring, and any graded semisimple ring $A$ is $\varepsilon$-faithful
to the left (and to the right) by \cite[Proposition 2.9.6]{nvo}.
\end{proof}

We also recall the definition of the coinduced functor from
\cite[Section 2.5]{nvo}. If $N$ is a left $R_\varepsilon$-module,
denote $Coind(N)_g=\{ f\in Hom_{R_\varepsilon}(R,N)| f(R_h)=0
\mbox{ for any } h\neq g^{-1}\}$ for each $g\in G$. Then
$\sum_{g\in G}Coind(N)_g$ is a direct sum inside
$Hom_{R_\varepsilon}(R,N)$, which we denote by $Coind(N)$. Then
$Coind(N)$ is an $R$-submodule of $Hom_{R_\varepsilon}(R,N)$,
moreover, it is a graded $R$-module with the decomposition given
by the sum above. If $M$ is a graded left $R$-module, and
$\sigma\in G$, then the map
$$\nu_M: M\ra Coind(M_\sigma)(\sigma^{-1}), \nu_M(m_g)(a)=a_{\sigma g^{-1}}m_g \mbox{  for any }m_g\in M_g, a\in
R$$ is a morphism of graded $R$-modules, see \cite[page 39]{nvo}.
Moreover, ${\rm Im}\,\nu_M$ is an essential submodule of
$Coind(M_\sigma)(\sigma^{-1})$ (\cite[Proposition 2.6.2]{nvo}),
and $\nu_M$ is injective if $M$ is $\sigma$-faithful
(\cite[Proposition 2.6.3]{nvo}).

\begin{theorem}
Let $R$ be a graded Artinian ring, and let $\sigma \in G$. The
following are equivalent.\\
{\rm (1)} $R$ is a $\sigma$-graded Frobenius ring.\\
{\rm (2)} $R$ is $\sigma$-faithful to the left and to the right,
${\rm soc}(_{R_{\varepsilon}}R_\sigma)\simeq
R_\varepsilon/J(R_\varepsilon)$ as left $R_\varepsilon$-modules,
and ${\rm soc}((R_\sigma)_{R_{\varepsilon}})\simeq
R_\varepsilon/J(R_\varepsilon)$ as right $R_\varepsilon$-modules.
\end{theorem}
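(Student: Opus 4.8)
The plan is to prove the equivalence by showing that condition (2) is equivalent to condition (4) of Theorem~\ref{teoremagradedFrobenius}, namely to the conjunction ${\rm soc}_\ell^{\rm gr}(R)(\sigma)\simeq R/J^{\rm gr}(R)$ in $R-gr$ and $(\sigma){\rm soc}_r^{\rm gr}(R)\simeq R/J^{\rm gr}(R)$ in $gr-R$; once this is done, Theorem~\ref{teoremagradedFrobenius} closes the loop in both directions. Throughout I will freely use that $R$ is graded left and right Artinian, that $J^{\rm gr}(R)\cap R_\varepsilon=J(R_\varepsilon)$, that $Coind$, ${\rm soc}^{\rm gr}$ and the shift functors preserve isomorphisms and that ${\rm soc}^{\rm gr}$ commutes with shifting, and the coinduced functor $Coind$ from \cite[Section 2.5]{nvo} together with the natural morphism $\nu_M\colon M\to Coind(M_\sigma)(\sigma^{-1})$, which has essential image (\cite[Proposition 2.6.2]{nvo}) and is injective when $M$ is $\sigma$-faithful (\cite[Proposition 2.6.3]{nvo}). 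I will carry out the left-sided half; the right-sided statements follow from the opposite-side versions of the same lemmas.

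For $(1)\Rightarrow(2)$: since $R$ is $\sigma$-graded Frobenius it satisfies condition (4) of Theorem~\ref{teoremagradedFrobenius}, so ${\rm soc}_\ell^{\rm gr}(R)(\sigma)\simeq R/J^{\rm gr}(R)$ in $R-gr$. By Lemma~\ref{1fidel} the right-hand module is $\varepsilon$-faithful, hence so is ${\rm soc}_\ell^{\rm gr}(R)(\sigma)$, which means ${\rm soc}_\ell^{\rm gr}(R)$ is $\sigma$-faithful; as $R$ is graded left Artinian, ${\rm soc}_\ell^{\rm gr}(R)$ is essential in $R$, so $R$ is $\sigma$-faithful to the left by Lemma~\ref{lemafaithful}(ii). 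Restricting the above graded isomorphism to degree-$\varepsilon$ components (which are left $R_\varepsilon$-modules), using Lemma~\ref{lemasoclu} to identify ${\rm soc}_\ell^{\rm gr}(R)_\sigma$ with ${\rm soc}(_{R_\varepsilon}R_\sigma)$, and using $(R/J^{\rm gr}(R))_\varepsilon=R_\varepsilon/J(R_\varepsilon)$, yields ${\rm soc}(_{R_\varepsilon}R_\sigma)\simeq R_\varepsilon/J(R_\varepsilon)$ as left $R_\varepsilon$-modules. The right-handed assertions of (2) follow symmetrically.

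For $(2)\Rightarrow(1)$, the key idea is that both ${\rm soc}_\ell^{\rm gr}(R)(\sigma)$ and $R/J^{\rm gr}(R)$ can be identified, as graded left $R$-modules, with ${\rm soc}^{\rm gr}\bigl(Coind(R_\varepsilon/J(R_\varepsilon))\bigr)$, so that no matching of individual simple summands — which would presuppose the Nakayama permutation and be circular — is required. First, $R$ is $\sigma$-faithful to the left, hence so is ${\rm soc}_\ell^{\rm gr}(R)$ (Lemma~\ref{lemafaithful}(i)); by Lemma~\ref{lemasoclu} and the hypothesis, ${\rm soc}_\ell^{\rm gr}(R)_\sigma={\rm soc}(_{R_\varepsilon}R_\sigma)\simeq R_\varepsilon/J(R_\varepsilon)$. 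Applying $\nu$ to $M={\rm soc}_\ell^{\rm gr}(R)$ and transporting this isomorphism through the functor $Coind$, we get that ${\rm soc}_\ell^{\rm gr}(R)$ embeds as an essential graded submodule of $Coind(R_\varepsilon/J(R_\varepsilon))(\sigma^{-1})$; being graded semisimple, it therefore equals the socle of that module by Lemma~\ref{lemafaithful}(iii), i.e. ${\rm soc}_\ell^{\rm gr}(R)\simeq {\rm soc}^{\rm gr}\bigl(Coind(R_\varepsilon/J(R_\varepsilon))\bigr)(\sigma^{-1})$. Second, $R/J^{\rm gr}(R)$ is graded semisimple (hence graded left Artinian) and $\varepsilon$-faithful (Lemma~\ref{1fidel}), with $(R/J^{\rm gr}(R))_\varepsilon=R_\varepsilon/J(R_\varepsilon)$; applying $\nu$ with parameter $\varepsilon$ to $M=R/J^{\rm gr}(R)$ embeds it essentially in $Coind(R_\varepsilon/J(R_\varepsilon))$, and Lemma~\ref{lemafaithful}(iii) again gives $R/J^{\rm gr}(R)\simeq {\rm soc}^{\rm gr}\bigl(Coind(R_\varepsilon/J(R_\varepsilon))\bigr)$. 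Comparing, ${\rm soc}_\ell^{\rm gr}(R)(\sigma)\simeq R/J^{\rm gr}(R)$ in $R-gr$; the opposite-side argument gives $(\sigma){\rm soc}_r^{\rm gr}(R)\simeq R/J^{\rm gr}(R)$ in $gr-R$, and then $R$ is $\sigma$-graded Frobenius by $(4)\Rightarrow(1)$ of Theorem~\ref{teoremagradedFrobenius}.

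The step most prone to error is the use of the coinduced functor to reconstruct a $\sigma$-faithful graded module from its degree-$\sigma$ component: one must check that $\nu_M$ is injective (this is exactly where $\sigma$-faithfulness of $M$ enters) with essential image, that the $R_\varepsilon$-isomorphism ${\rm soc}(_{R_\varepsilon}R_\sigma)\simeq R_\varepsilon/J(R_\varepsilon)$ propagates to the graded level through $Coind$, ${\rm soc}^{\rm gr}$ and shifting, and finally that applying the device to ${\rm soc}_\ell^{\rm gr}(R)$ with parameter $\sigma$ and to $R/J^{\rm gr}(R)$ with parameter $\varepsilon$ produces the very same module ${\rm soc}^{\rm gr}(Coind(R_\varepsilon/J(R_\varepsilon)))$. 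A secondary, purely bookkeeping obstacle is that the whole argument must be run on the right as well: $\sigma$-faithfulness to the right, the identification of ${\rm soc}_r^{\rm gr}(R)_\sigma$, and the coinduced-functor facts must all be invoked in their right-module forms.
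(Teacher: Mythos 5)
Your proof is correct and follows essentially the same route as the paper's: both directions are proved by passing to condition (4) of Theorem~\ref{teoremagradedFrobenius}, using Lemmas~\ref{lemafaithful}, \ref{lemasoclu}, \ref{1fidel}, and in $(2)\Rightarrow(1)$ the coinduced functor $Coind$ and the map $\nu_M$ to identify both ${\rm soc}_\ell^{\rm gr}(R)(\sigma)$ and $R/J^{\rm gr}(R)$ with the socle of the same coinduced module. The only cosmetic difference is in $(1)\Rightarrow(2)$: the paper deduces $\sigma$-faithfulness of $R$ via the injective envelope $R=E^{\rm gr}({\rm soc}_\ell^{\rm gr}(R))\simeq E^{\rm gr}(R/J^{\rm gr}(R))(\sigma^{-1})$, whereas you argue more directly from the isomorphism ${\rm soc}_\ell^{\rm gr}(R)(\sigma)\simeq R/J^{\rm gr}(R)$ together with essentiality of the socle, which is a slight shortcut but not a genuinely different argument.
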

\begin{proof}

(1)$\Rightarrow$(2) Since $R$ is graded left Artinian, ${\rm
soc}^{\rm gr}_\ell(R)$ is essential in $R$ as a graded left
submodule. As $R$ is graded injective, we have $E^{\rm gr}({\rm
soc}^{\rm gr}_\ell(R))=R$. Now ${\rm soc}^{\rm gr}_\ell(R)\simeq
(R/J^{\rm gr}(R))(\sigma^{-1})$, and we get $R=E^{\rm gr}({\rm
soc}^{\rm gr}_\ell(R))\simeq E^{\rm gr}(R/J^{\rm
gr}(R))(\sigma^{-1})$. By Lemma \ref{1fidel}, $R/J^{\rm gr}(R)$ is
$\varepsilon$-faithful as a graded left $R$-module, then so is
$E^{\rm gr}(R/J^{\rm gr}(R))$ by Lemma \ref{lemafaithful}(ii).
Therefore its shift $E^{\rm gr}(R/J^{\rm gr}(R))(\sigma^{-1})$ is
$\sigma$-faithful, showing that $R$ is $\sigma$-faithful to the
left.

The isomorphism ${\rm soc}^{\rm gr}_\ell(R)(\sigma)\simeq R/J^{\rm
gr}(R)$ of graded left $R$-modules, induces an isomorphism of left
$R_\varepsilon$-modules between the homogeneous components of
degree $\varepsilon$. We have $({\rm soc}^{\rm
gr}_\ell(R)(\sigma))_\varepsilon={\rm soc}^{\rm
gr}_\ell(R)_{\sigma}={\rm soc}(_{R_{\varepsilon}}R_\sigma)$ (the
last equality following from Lemma \ref{lemasoclu}), and
$(R/J^{\rm gr}(R))_\varepsilon =R_\varepsilon/J(R_\varepsilon)$
since $J^{\rm gr}(R)\cap R_{\varepsilon}=J(R_\varepsilon)$ (see
\cite[Corollary 2.9.3]{nvo}). We get ${\rm
soc}(_{R_{\varepsilon}}R_\sigma)\simeq
R_\varepsilon/J(R_\varepsilon)$ as left $R_\varepsilon$-modules.
Working similarly to the right, we obtain that $R$ is
$\varepsilon$-faithful to the right and ${\rm
soc}((R_\sigma)_{R_{\varepsilon}})\simeq
R_\varepsilon/J(R_\varepsilon)$ as right $R_\varepsilon$-modules.

(2)$\Rightarrow$(1) Since $R$ is $\sigma$-faithful to the left,
${\rm soc}^{\rm gr}_\ell (R)$ is also $\sigma$-faithful.
Therefore, taking into account the above considerations and Lemma
\ref{lemasoclu}, we see that
$$\nu_{{\rm soc}^{\rm gr}_\ell (R)}:{\rm soc}^{\rm gr}_\ell (R)\ra
Coind({\rm soc}^{\rm gr}_\ell (R)_\sigma)(\sigma^{-1})= Coind({\rm
soc}(_{R_{\varepsilon}}R_\sigma))(\sigma^{-1})$$ is an essential
injective morphism in $R-gr$. As ${\rm soc}^{\rm gr}_\ell (R)$ is
graded semisimple, it follows by Lemma \ref{lemafaithful}(iii)
that ${\rm soc}^{\rm gr}_\ell (R)(\sigma)\simeq ({\rm Im}\,
\nu_{{\rm soc}^{\rm gr}_\ell (R)})(\sigma)={\rm soc}^{\rm
gr}(Coind({\rm soc}(_{R_{\varepsilon}}R_\sigma)))$.

On the other hand, $R/J^{\rm gr}(R)$ is $\varepsilon$-faithful as
a graded left $R$-module by Lemma \ref{1fidel}, so
$$\nu_{R/J^{\rm gr}(R)}:R/J^{\rm gr}(R)\ra Coind ((R/J^{\rm
gr}(R))_\varepsilon)=Coind(R_\varepsilon/J(R_\varepsilon))$$ is an
essential injective morphism in $R-gr$. Since $R/J^{\rm gr}(R)$ is
a semisimple graded $R$-module, we see again by Lemma
\ref{lemafaithful}(iii) that
$$R/J^{\rm gr}(R)\simeq {\rm Im}\,\nu_{R/J^{\rm gr}(R)}={\rm
soc}^{\rm gr}(Coind(R_\varepsilon/J(R_\varepsilon))).$$ Now ${\rm
soc}(_{R_{\varepsilon}}R_\sigma)\simeq
R_\varepsilon/J(R_\varepsilon)$ as left $R_\varepsilon$-modules,
therefore  $Coind({\rm soc}(_{R_{\varepsilon}}R_\sigma))\simeq
Coind(R_\varepsilon/J(R_\varepsilon))$ are isomorphic as graded
left $R$-modules, and then so are their socles. We conclude that
${\rm soc}^{\rm gr}_\ell(R)(\sigma)\simeq R/J^{\rm gr}(R)$.
Working similarly to the right (with the adapted version of the
coinduced functor), we obtain $(\sigma){\rm soc}_r^{\rm
gr}(R)\simeq R/J^{\rm gr}(R)$ in $gr-R$. These show that $R$ is
$\sigma$-graded Frobenius.

\end{proof}

\begin{corollary} \label{corolargradedFrobenius}
Let $R$ be a graded Artinian ring. Then $R$ is graded Frobenius if
and only if it is $\varepsilon$-faithful to the left and to the
right and $R_\varepsilon$ is a Frobenius ring.
\end{corollary}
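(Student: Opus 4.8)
The plan is to derive Corollary \ref{corolargradedFrobenius} from the preceding theorem (the equivalence (1)$\Leftrightarrow$(2) with $\sigma=\varepsilon$) by specializing to $\sigma=\varepsilon$ and unwinding what the three conditions in part (2) say in that case. First I would observe that for $\sigma=\varepsilon$ the homogeneous component $R_\sigma$ is exactly $R_\varepsilon$, viewed as a module over itself on either side. Thus the condition ``${\rm soc}(_{R_\varepsilon}R_\sigma)\simeq R_\varepsilon/J(R_\varepsilon)$ as left $R_\varepsilon$-modules'' becomes ``${\rm soc}(_{R_\varepsilon}R_\varepsilon)\simeq R_\varepsilon/J(R_\varepsilon)$ as left $R_\varepsilon$-modules'', and similarly on the right. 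By the classical characterization of Frobenius rings recalled at the beginning of Section \ref{sectionFrobenius} (conditions (1), (2), (4) of that list, with trivial grading), the simultaneous validity of ${\rm soc}_\ell(R_\varepsilon)\simeq R_\varepsilon/J(R_\varepsilon)$ in $R_\varepsilon$-mod and ${\rm soc}_r(R_\varepsilon)\simeq R_\varepsilon/J(R_\varepsilon)$ in mod-$R_\varepsilon$ is equivalent to $R_\varepsilon$ being a Frobenius ring. One small point to check here is that $R_\varepsilon$ is two-sided Artinian, which is needed to invoke the classical theorem: this follows because $R$ is graded Artinian and each $R_g$, in particular $R_\varepsilon$, is then Artinian as an $R_\varepsilon$-module on either side (this kind of observation is already used in the proof of the Proposition on finite-support rings in Section \ref{sectionQF}).

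Next I would deal with the $\varepsilon$-faithfulness conditions. With $\sigma=\varepsilon$, the theorem's condition (2) requires $R$ to be $\varepsilon$-faithful to the left and to the right, which is exactly the hypothesis in the Corollary, so nothing extra is needed; these just carry over verbatim. Putting the two reductions together: condition (2) of the theorem with $\sigma=\varepsilon$ says precisely that $R$ is $\varepsilon$-faithful on both sides and that ${\rm soc}_\ell(R_\varepsilon)$ and ${\rm soc}_r(R_\varepsilon)$ are each isomorphic to $R_\varepsilon/J(R_\varepsilon)$ on the appropriate side, which by the classical fact is ``$R$ is $\varepsilon$-faithful on both sides and $R_\varepsilon$ is Frobenius''. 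Since the theorem asserts (1)$\Leftrightarrow$(2), and condition (1) with $\sigma=\varepsilon$ is by definition ``$R$ is graded Frobenius'', the Corollary follows immediately.

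I do not expect a serious obstacle here; the Corollary is essentially a restatement of the $\sigma=\varepsilon$ case of the theorem once one recognizes that the soc-conditions over $R_\varepsilon$ are the classical Frobenius criterion for $R_\varepsilon$. The only things requiring a word of justification are (a) that $R_\varepsilon$ is two-sided Artinian, so that the ungraded Frobenius characterization applies, and (b) that the two one-sided socle isomorphisms over $R_\varepsilon$ are jointly equivalent to $R_\varepsilon$ being Frobenius — this is exactly the equivalence of conditions (1), (2) and (4) in the list recalled at the start of Section \ref{sectionFrobenius}, applied to the ring $R_\varepsilon$ with trivial grading. So the proof is short: specialize the theorem to $\sigma=\varepsilon$, translate the two module-theoretic socle conditions over $R_\varepsilon$ into ``$R_\varepsilon$ is Frobenius'' via the classical theorem, and conclude.

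\begin{proof}
Apply the previous theorem with $\sigma=\varepsilon$. Since $R$ is graded Artinian, $R_\varepsilon$ is Artinian both as a left and as a right module over itself, hence $R_\varepsilon$ is a two-sided Artinian ring, so the ungraded characterizations of Frobenius rings recalled at the beginning of this section apply to $R_\varepsilon$. With $\sigma=\varepsilon$ we have $R_\sigma=R_\varepsilon$, so condition (2) of the theorem reads: $R$ is $\varepsilon$-faithful to the left and to the right, ${\rm soc}(_{R_\varepsilon}R_\varepsilon)\simeq R_\varepsilon/J(R_\varepsilon)$ as left $R_\varepsilon$-modules, and ${\rm soc}((R_\varepsilon)_{R_\varepsilon})\simeq R_\varepsilon/J(R_\varepsilon)$ as right $R_\varepsilon$-modules. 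By the equivalence of conditions (1), (2) and (4) in the list recalled above, applied to $R_\varepsilon$, the conjunction of the last two isomorphisms is equivalent to $R_\varepsilon$ being a Frobenius ring. Hence condition (2) of the theorem (for $\sigma=\varepsilon$) holds if and only if $R$ is $\varepsilon$-faithful to the left and to the right and $R_\varepsilon$ is a Frobenius ring. Since condition (1) of the theorem for $\sigma=\varepsilon$ is by definition the statement that $R$ is graded Frobenius, the claim follows.
\end{proof}
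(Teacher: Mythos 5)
Your proof is correct and is precisely the argument the paper intends: the Corollary is stated immediately after the theorem with no separate proof, because it is exactly the specialization $\sigma=\varepsilon$, with the two $R_\varepsilon$-socle isomorphisms in condition~(2) recognized as condition~(4) of the classical Frobenius characterization recalled at the start of the section. Your remark that $R_\varepsilon$ is two-sided Artinian (so that the classical characterization applies) is a worthwhile check that the paper leaves implicit; it holds because the assignment $I\mapsto RI$ from one-sided ideals of $R_\varepsilon$ to graded one-sided ideals of $R$ is injective, as $(RI)_\varepsilon=R_\varepsilon I=I$.
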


\begin{remark}
{\rm The previous Corollary shows that If $R$ is a graded
Frobenius ring, then its homogeneous component $R_\varepsilon$ of
trivial degree is a Frobenius ring. We note that a similar
transfer does not hold for the quasi-Frobenius property, more
precisely, $R$ may be graded quasi-Frobenius, such that
$R_\varepsilon$ is not quasi-Frobenius.

We first recall from \cite[Example 16.60]{lam2} that if $A$ is a
finite dimensional algebra over a field $k$, then
$\mathcal{E}(A)=A\oplus A^*$ has a $k$-algebra structure with
multiplication defined by $(a,a^*)(b,b^*)=(ab, ab^*+a^*b)$ for any
$a,b\in A$ and $a^*,b^*\in A^*$; here we regard $A^*$ as an
$A$-bimodule in the usual way. $\mathcal{E}(A)$ is called the
trivial extension of $A$, and it is always a Frobenius algebra
(even a symmetric algebra). Moreover, $\mathcal{E}(A)$ has a
grading by the cyclic group $C_2=\{ \varepsilon, c\}$ of order 2,
with $\mathcal{E}(A)_\varepsilon=A\oplus 0$ and
$\mathcal{E}(A)_c=0\oplus A^*$. One can easily see that
$\mathcal{E}(A)$ is not $\varepsilon$-faithful to the left, but it
is $c$-faithful to the left.

Now we see that if $A$ is a finite dimensional $k$-algebra which
is not quasi-Frobenius, then $\mathcal{E}(A)$ is a Frobenius ring
(since it is a Frobenius algebra), so it is a quasi-Frobenius
ring, thus also a graded quasi-Frobenius ring. On the other hand,
$\mathcal{E}(A)_\varepsilon\simeq A$ is not a quasi-Frobenius
ring.

Now if we take a Frobenius finite dimensional $k$-algebra $A$,
then we see that $\mathcal{E}(A)_\varepsilon$ is a Frobenius ring,
but $\mathcal{E}(A)$ is not a graded Frobenius ring, since it is
not $\varepsilon$-faithful to the left. Thus the "if" implication
in Corollary \ref{corolargradedFrobenius} does not hold anymore if
we omit the $\varepsilon$-faithful condition.

The next result shows that these connections work better for
strongly graded rings.}
\end{remark}

\begin{proposition}
Let $R$ be a strongly graded ring. The following assertions
hold.\\
{\rm (1)} $R$ is a graded quasi-Frobenius ring if and only if
$R_\varepsilon$ is a quasi-Frobenius ring.\\
{\rm (2)} $R$ is a graded Frobenius ring if and only if
$R_\varepsilon$ is a Frobenius ring.
\end{proposition}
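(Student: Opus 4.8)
The plan is to reduce everything to the ungraded situation over $R_\varepsilon$ by means of the equivalence of categories attached to a strongly graded ring. Recall (the graded version of Dade's theorem, see \cite{nvo}) that when $R$ is strongly graded the functor $M\mapsto M_\varepsilon$ from $R-gr$ to the category of left $R_\varepsilon$-modules is an equivalence of categories, and that it sends $R$ to the regular module ${}_{R_\varepsilon}R_\varepsilon$. Since any equivalence of abelian categories preserves Noetherian objects and injective objects, the graded finiteness and injectivity conditions on the object $R$ of $R-gr$ correspond exactly to the matching conditions on $R_\varepsilon$ as a module over itself.

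For (1) I would invoke the characterization of the graded quasi-Frobenius property given by Theorem \ref{gradedQF}(3): $R$ is graded quasi-Frobenius if and only if it is graded left Noetherian and graded left injective, i.e. $R$ is simultaneously a Noetherian object and an injective object of $R-gr$. Transporting this along the equivalence, it is equivalent to $R_\varepsilon$ being simultaneously a Noetherian and an injective object in the category of left $R_\varepsilon$-modules, that is, $R_\varepsilon$ is left Noetherian and left self-injective; by the ungraded characterization (3) of quasi-Frobenius rings recalled at the beginning of Section \ref{sectionQF}, this says precisely that $R_\varepsilon$ is quasi-Frobenius.

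For (2) the extra ingredient is the observation that a strongly graded ring is automatically $\varepsilon$-faithful to the left and to the right. Indeed, given a graded left $R$-module $M$ and a nonzero homogeneous element $m_g\in M_g$, from $R_gR_{g^{-1}}=R_\varepsilon$ one gets $R_g(R_{g^{-1}}m_g)=(R_gR_{g^{-1}})m_g=R_\varepsilon m_g\ni m_g$, hence $R_{g^{-1}}m_g=R_{\varepsilon g^{-1}}m_g\neq 0$; the symmetric argument handles graded right modules. Thus $R$ itself is $\varepsilon$-faithful on both sides, and Corollary \ref{corolargradedFrobenius} becomes applicable: $R$ is graded Frobenius if and only if $R$ is graded Artinian, $\varepsilon$-faithful on both sides, and $R_\varepsilon$ is Frobenius. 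The faithfulness condition is now free, while the standing graded-Artinian hypothesis of that corollary holds in both directions --- by definition of a graded Frobenius ring in one direction, and because $R_\varepsilon$ Frobenius forces $R_\varepsilon$ quasi-Frobenius, hence $R$ graded quasi-Frobenius by part (1), hence $R$ graded Artinian, in the other. Therefore $R$ is graded Frobenius if and only if $R_\varepsilon$ is Frobenius.

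I do not expect any step to be genuinely hard; the argument is an assembly of earlier results together with Dade's theorem. The one place to be careful is (2): the trivial-extension example in the remark preceding this proposition shows that for a general graded ring ``$R_\varepsilon$ Frobenius'' does not imply ``$R$ graded Frobenius'', so the actual content is recognising that the $\varepsilon$-faithfulness obstruction vanishes for strongly graded rings, and verifying that Corollary \ref{corolargradedFrobenius} may legitimately be invoked in both directions.
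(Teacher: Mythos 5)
Your proof is correct and follows essentially the same strategy as the paper: part (1) via Dade's theorem (you use the $(-)_\varepsilon$ direction of the equivalence, the paper uses the induced functor $R\otimes_{R_\varepsilon}-$, which is its quasi-inverse) together with Theorem \ref{gradedQF}, and part (2) via the observation that strongly graded rings are automatically $\varepsilon$-faithful, combined with Corollary \ref{corolargradedFrobenius}. Your extra remark that the graded-Artinian standing hypothesis of Corollary \ref{corolargradedFrobenius} is available in both directions because of part (1) is a point the paper leaves implicit, and is a welcome bit of care.
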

\begin{proof}
As a consequence of Dade's Theorem, which says that the induced
functor $R\ot_{R_\varepsilon}-:R_{\varepsilon}-mod \ra R-gr$ is an
equivalence of categories (see \cite[Theorem 3.1.1]{nvo}), we have
that $R$ is graded left Artinian (Noetherian) if and only if
$R_\varepsilon$ is left Artinian (Noetherian), and the same fact
is true to the right. Also, $R$ is injective in $R-gr$ if and only
if $R_\varepsilon$ is injective in $R_\varepsilon$-mod. Now (1) is
clear by using Theorem \ref{gradedQF}.

On the other hand, $R$ is $\varepsilon$-faithful to the left (and
to the right). Indeed, let $g\in G$ and $r_g\in R_g$ such that
$R_{g^{-1}}r_g=0$. Then $R_gR_{g^{-1}}r_g=0$. But $R$ is strongly
graded, so $R_gR_{g^{-1}}=R_\varepsilon$, so $R_\varepsilon
r_g=0$, showing that $r_g=0$.  Now (2) follows directly from
Corollary \ref{corolargradedFrobenius}.
\end{proof}

\begin{remark}
{\rm If $R$ is a finite dimensional graded algebra over a field,
it is obvious that if $R$ is graded Frobenius, then $R$ is a
Frobenius algebra; indeed, we just regard an isomorphism of graded
left $R$-modules between $R$ and $R^*$ just as an isomorphism of
$R$-modules.

If $R$ is a $G$-graded ring which is graded Frobenius, then $R$ is
not necessarily a Frobenius ring. Indeed, we can use the example
after Remark \ref{definitiegradedQF}: $R=k[X,X^{-1}]$ is a graded
division ring, so it is clearly a graded Frobenius ring, while it
is not even quasi-Frobenius.

We do not know whether for a finite group $G$, a graded Frobenius
ring $R$ is also a Frobenius ring. This is true in the case where
the order of $G$ is invertible in $R$. Indeed, under this
condition, a consequence of graded Clifford theory is that $J^{\rm
gr}(R)=J(R)$ and ${\rm soc}^{\rm gr}(R)={\rm soc}(R)$, see
\cite[Corollary 4.4.5 and Proposition 4.4.10]{nvo}. Since $R$ is
graded Frobenius, it is graded quasi-Frobenius, thus also
quasi-Frobenius, and then the result is clear taking into account
Theorem \ref{teoremagradedFrobenius} (1).}
\end{remark}

\end{document}